\newtheorem{theorem}{Theorem}[section]
\newtheorem{lemma}[theorem]{Lemma}
\newtheorem{propos}[theorem]{Proposition}
\newtheorem{corol}[theorem]{Corollary}
\newtheorem{definition}[theorem]{Definition}
\newtheorem{remark}[theorem]{Remark}
\def\RR{{\mathbb{R}}}
\def\NN{{\mathbb{N}}}
\newcommand{\eps}{{\varepsilon}}
\newcommand{\mean}[1]{\,-\hskip-1.08em\int_{#1}}
\newcommand{\textmean}[1]{- \hskip-.9em \int_{#1}}
\newcommand{\Haus}[1]{{\mathscr H}^{#1}} % Misura di Hausdorff
\newcommand{\PHaus}[1]{{\mathscr P}^{#1}} % Misura di Hausdorff
\newcommand{\fA}{{\mathcal A}}
\newcommand{\fB}{{\mathcal B}}
\newcommand{\fC}{{\mathcal C}}
\newcommand{\fD}{{\mathcal D}}
\newcommand{\fE}{{\mathcal E}^\flat}
\newcommand{\fF}{{\mathcal F}}
\newcommand{\fT}{{\mathcal T}}
\newcommand{\fueta}{\frac 14}
\newcommand{\fuetainv}{4}
\renewcommand{\div}{{\text {div}}\,}
\begin{document}

\title[The generalized Caffarelli-Kohn-Nirenberg Theorem]
{The generalized Caffarelli-Kohn-Nirenberg Theorem for the hyperdissipative Navier-Stokes system}
\author{Maria Colombo}
\author{Camillo De Lellis}%
\author{Annalisa Massaccesi}
%\address{University of Z\"urich}

\begin{abstract}
We introduce a notion of suitable weak solution of the
hyperdissipative Navier--Stokes equations and we achieve a corresponding extension of the regularity theory of 
Caffarelli--Kohn--Nirenberg.
\end{abstract}

\maketitle

\tableofcontents

\section{Introduction}

Let $\alpha\geq 0$ and consider the operator $(- \Delta)^\alpha$
whose Fourier symbol is $|\xi|^{2\alpha}$. For a positive integer $\alpha=k$ the operator reduces, up to a sign, to composing $k$ times the classical Laplacian $- \Delta$. 
In this paper $\alpha$ ranges between $1$ and $2$ and we consider the so-called hyperdissipative Navier--Stokes system in $\RR^3$, which
is the following system of (pseudo) partial differential equations
\begin{equation}\label{e:NS_alfa}
\left\{
\begin{array}{l}
\partial_t u + (u\cdot \nabla) u + \nabla p = - (-\Delta)^{\alpha} u\\ \\
\div u =0\, .
\end{array}\right.
\end{equation}
The system is usually complemented with the initial condition 
\begin{equation}\label{e:Cauchy}
u (\cdot, 0) = u_0\, .
\end{equation}
For $\alpha \geq \frac{5}{4}$ and for smooth $u_0$ which decay sufficiently fast at infinity, it is known that the system \eqref{e:NS_alfa}-\eqref{e:Cauchy} has a classical global in time solution, see in particular \cite{Lions}. On the periodic torus a simple proof has been given in \cite{MattinglySinai} when $\alpha > \frac{5}{4}$, whereas the recent papers \cite{Tao} and \cite{BMR} improve the case $\alpha = \frac{5}{4}$ allowing operators with symbols $- |\xi|^{\frac{5}{2}} f (\xi)$ for suitable logarithmic-like $f$. 

\medskip

In this paper we restrict our considerations to the case 
\begin{equation}\label{e:<1/4}
1 < \alpha \leq \frac{5}{4}\,.
\end{equation}
When $\alpha < \frac{5}{4}$ the global existence of classical solutions is still an open question which covers one of the celebrated Millennium Prize problems (see \cite{Fefferman}). In his groundbreaking work \cite{Leray} Leray constructed some global weak solutions (called nowadays Leray--Hopf weak solutions) when $\alpha=1$ and showed several remarkable facts for them. Two are particularly relevant for our discussion:
\begin{itemize}
\item[(i)] the Leray--Hopf weak solutions $(u, p)$ coincide with the classical solutions as long as the latter exist (weak-strong uniqueness);
\item[(ii)] they are regular except for a closed set of  exceptional times, from now on denoted by ${\rm Sing}_T\, u$, which has $0$ Hausdorff $\mathcal{H}^{\sfrac{1}{2}}$ measure.
\end{itemize}
Leray's approach can be carried on to the hyperdissipative Navier--Stokes. Indeed the existence and weak-strong uniqueness are rather straightforward, whereas a suitable generalization of the estimate on the size of ${\rm Sing}_T\, u$ has been given recently in \cite{JiuWang} (see below for
the precise statement). 

\medskip

Following the pioneering work of Scheffer, see \cite{Scheffer1,Scheffer2}, Caffarelli, Kohn and Nirenberg in \cite{CKN} gave a space-time version of the regularity theorem of Leray: they proved, in particular, the existence of global Leray--Hopf solutions which are regular outside a bounded relatively closed set of Hausdorff $\mathcal{H}^1$ measure zero in $\mathbb R^3 \times (0, \infty)$. Indeed their theorem yields a stronger information, see below for the precise statement. Note moreover that, if the initial data is regular enough, Leray's theory implies the regularity of the solution in
a sufficiently small stripe $\mathbb R^3 \times [0, \varepsilon]$ and thus the singular set is also compact. 

In \cite{KP} Katz and Pavlovi\'c gave a first version of the Caffarelli--Kohn--Nirenberg theorem in the range \eqref{e:<1/4}. More precisely they proved that, if
a classical solution blows up at a finite time $T$, then it can be extended smoothly to $(\mathbb R^3\setminus K)\times \{T\}$ for some closed set $K$ of Hausdorff dimension at most $5-4\alpha$. The theorem of Katz and Pavlovi\'c is not a full extension of the Caffarelli--Kohn--Nirenberg: first of all the latter goes beyond the first singular time and secondly the proof of Katz and Pavlovi\'c does not imply $\mathcal{H}^{5-4\alpha} (K)=0$.

\subsection{The extension of the Caffarelli--Kohn--Nirenberg theory}
In the present paper we prove a stronger version of the Katz--Pavlovi\'c result which extends the Caffarelli--Kohn--Nirenberg theorem in its full power. 
In order to give a precise statement we introduce the usual space-time cylinders
\[
Q_r (x_0, t_0) = B_r (x_0) \times (t_0-r^{2\alpha}, t_0]\, ,
\]
compatible with the scaling of the equations (we omit the centers of the ball and the centroids of the cylinder when $x_0=0$ and $(x_0, t_0) = (0,0)$, respectively).
We then define the parabolic Hausdorff measures with the usual Carath\'eodory construction (cf. \cite[2.10.1]{Federer}).
Given $E\subset\RR^3\times\RR$, $\beta\ge 0$ and $\delta>0$, we set
\[
\PHaus{\beta}_{\delta}(E):=\inf\left\{\sum_i r_i^\beta:\,E\subset\bigcup_i Q_{r_i}(x_i,t_i)\text{ and } r_i<\delta\ \forall\,i\right\}\, 
\]
and we call \emph{parabolic Hausdorff measure} of the set $E$ the number
\[
\PHaus{\beta}(E):=\lim_{\delta\to 0}\PHaus{\beta}_\delta(E)=\sup_{\delta>0}\PHaus{\beta}_\delta(E)\,.
\]
Moreover, given a Leray--Hopf weak solution $(u,p)$, we call a point $(x,t)$ {\em regular} if there is a cylinder $Q_r (x,t)$ where $u$ is continuous and we denote by ${\rm Sing}\, u$ the (relatively closed) set of singular points, namely those points which are not regular. 

\begin{theorem}\label{t:main}
Given $\alpha \in (1, \frac{5}{4}]$ and any divergence-free initial data $u_0\in L^2$ there are a Leray--Hopf weak solution $(u,p)$ of \eqref{e:NS_alfa} (see Definition \ref{d:LH}) and a relatively closed set ${\rm Sing}\, u  \subset \mathbb R^3 \times (0, \infty)$ such that $\mathcal{P}^{5-4\alpha} ({\rm Sing}\, u) =0$.
\end{theorem}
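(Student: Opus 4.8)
The plan is to follow the three-step architecture of Caffarelli--Kohn--Nirenberg, adapted both to the parabolic scaling $Q_r=B_r\times(t_0-r^{2\alpha},t_0]$ and to the nonlocality of $(-\Delta)^\alpha$: (1) single out a class of \emph{suitable} Leray--Hopf solutions obeying a localized energy inequality, and show it is nonempty for every divergence-free $u_0\in L^2$; (2) prove an $\eps$-regularity theorem for suitable solutions; (3) combine it with a Vitali-type covering argument. The scaling-critical local energy quantity is
\[
\Phi(r):=r^{4\alpha-5}\int_{Q_r}|(-\Delta)^{\alpha/2}u|^2\, ,
\]
which one checks is invariant under $u\mapsto\lambda^{2\alpha-1}u(\lambda\,\cdot\,,\lambda^{2\alpha}\,\cdot\,)$ and which for $\alpha=1$ reduces to the classical $r^{-1}\int_{Q_r}|\nabla u|^2$. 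Since $\alpha\in(1,\frac54]$, the weight $z^{1-2\alpha}$ of the naive Caffarelli--Silvestre extension is not of Muckenhoupt type; I would instead factor $(-\Delta)^\alpha=-\Delta\,(-\Delta)^{\alpha-1}$, with $\alpha-1\in(0,\frac14]$, and extend the factor $(-\Delta)^{\alpha-1}$, so that the dissipation becomes a \emph{local} degenerate-elliptic operator in one extra variable. The localized energy inequality is then stated for the corresponding extension $u^\flat$ and carries, besides the usual terms, a nonlocal ``tail'' $\int_{\RR^3\setminus B}|u(y,t)|\,|y|^{-(3+2\alpha)}\,dy$ with no classical analogue.

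\smallskip

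For step (1) I would regularize by adding a supercritical viscosity, solving
\[
\partial_t u^\eps+(u^\eps\cdot\nabla)u^\eps+\nabla p^\eps=-(-\Delta)^\alpha u^\eps-\eps(-\Delta)^{3/2}u^\eps
\]
with data $u_0$. Since $\frac32>\frac54$, these problems have global smooth solutions (by a standard energy argument, as in the supercritical case recalled in the introduction), which in particular satisfy the localized energy \emph{equality}. The a priori bounds are uniform in $\eps$: the global energy inequality gives $u^\eps$ bounded in $L^\infty_tL^2_x\cap L^2_t\dot H^\alpha_x$, hence by Sobolev embedding (here $\alpha<\frac32$) and interpolation bounded in $L^p_{t,x,\mathrm{loc}}$ with $p=\frac{2(3+2\alpha)}{3}>3$, so Calderón--Zygmund bounds $p^\eps$ in $L^{p/2}_{t,x,\mathrm{loc}}$, $p/2>\frac32$. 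Aubin--Lions (with equicontinuity in time read off the equation) yields strong $L^2_{t,x,\mathrm{loc}}$ convergence $u^\eps\to u$, which, interpolated against the $L^p$-bound, is enough to pass to the limit in $(u^\eps\cdot\nabla)u^\eps$ and in the pressure terms; the term $\eps\langle(-\Delta)^{3/2}u^\eps,u^\eps\varphi\rangle$ is $O(\sqrt\eps)$ and disappears, while the genuine dissipation and the extension energy are lower semicontinuous, so the energy \emph{equality} for $u^\eps$ passes to an \emph{inequality} for $u$. This produces a suitable Leray--Hopf solution in the sense of Definition \ref{d:LH}.

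\smallskip

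For step (2) I would prove: there is $\eps_0>0$ such that if $\limsup_{r\to0}\Phi(r)<\eps_0$ at a point $(x_0,t_0)$, then $(x_0,t_0)$ is regular. The argument is the usual excess-decay iteration, here complicated by nonlocality. One first establishes a ``baby'' criterion: smallness of the scale-invariant quantity $\Psi(r):=r^{4\alpha-6}\int_{Q_r}(|u|^3+|p|^{3/2})$ on $Q_1$ forces, via the localized energy inequality together with Gagliardo--Nirenberg interpolation and the splitting of the pressure into a Calderón--Zygmund part (controlled by $u$ on a slightly larger ball) plus a harmonic remainder (enjoying interior decay), a geometric inequality $\Psi(\theta r)\le C\theta^{\gamma}\Psi(r)+(\text{higher order})$ for suitable $\theta\in(0,1)$ and $\gamma>0$; iterating gives Morrey decay of $\int_{Q_r}|u|^3$, hence parabolic H\"older continuity of $u$ by a Campanato-type embedding, hence $u\in L^\infty_{\mathrm{loc}}$, hence --- bootstrapping with the linear (Schauder) estimates for $\partial_t+(-\Delta)^\alpha$ and Calderón--Zygmund for $p$ --- smoothness near $(x_0,t_0)$. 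One then verifies that smallness of $\limsup_{r\to0}\Phi(r)$ forces $\Psi$ to be small at all sufficiently small scales, again by interpolation and pressure estimates, so that the baby criterion applies. The \textbf{main obstacle} lies precisely here: the localized energy inequality now contains the nonlocal dissipation and the tail term, and one must show these are harmless under the rescaling of the iteration --- in the classical proof the analogous terms are simply absent. Controlling them requires either a De Giorgi/Caccioppoli analysis of the weighted degenerate operator on the extension, or delicate commutator estimates for $(-\Delta)^\alpha$ against cutoffs, and it is closely tied to the restriction $\alpha>1$.

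\smallskip

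For step (3), once the $\eps$-regularity theorem is available, ${\rm Sing}\,u$ is contained in the set $S:=\{(x,t):\limsup_{r\to0}\Phi(r)\ge\eps_0\}$. Fix a compact $K\subset\RR^3\times(0,\infty)$. For each $(x,t)\in S\cap K$ there are arbitrarily small $r$ with $\int_{Q_r(x,t)}|(-\Delta)^{\alpha/2}u|^2\ge\eps_0\,r^{5-4\alpha}$; a Vitali covering extracts a disjoint subfamily $\{Q_{r_i}\}$ with $S\cap K\subset\bigcup_iQ_{Cr_i}$ and
\[
\sum_i r_i^{5-4\alpha}\le\eps_0^{-1}\sum_i\int_{Q_{r_i}}|(-\Delta)^{\alpha/2}u|^2\le\eps_0^{-1}\int_{K'}|(-\Delta)^{\alpha/2}u|^2<\infty\, ,
\]
with $K'$ a slightly enlarged compact set; since $u$ is Leray--Hopf, $|(-\Delta)^{\alpha/2}u|^2\in L^1_{\mathrm{loc}}$, so $\mathcal{P}^{5-4\alpha}({\rm Sing}\,u)$ is locally finite. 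As $5-4\alpha<3+2\alpha$, local finiteness of $\mathcal{P}^{5-4\alpha}$ forces $\mathcal{L}^4({\rm Sing}\,u)=0$, whence $\int_{{\rm Sing}\,u}|(-\Delta)^{\alpha/2}u|^2=0$; running the same covering inside open neighbourhoods of ${\rm Sing}\,u\cap K$ of arbitrarily small $|(-\Delta)^{\alpha/2}u|^2$-mass (outer regularity of that measure) improves local finiteness to $\mathcal{P}^{5-4\alpha}({\rm Sing}\,u)=0$, which is the claim.
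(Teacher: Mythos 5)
Your overall architecture (a class of suitable Leray--Hopf solutions, an $\varepsilon$-regularity criterion, then a Vitali covering upgraded to zero measure by absolute continuity) is the same as the paper's, and your steps (1) and (3) are structurally sound: the paper constructs suitable solutions via Leray's mollification of the advecting velocity rather than an added $\varepsilon(-\Delta)^{3/2}$ viscosity (either route must in the end produce the localized inequality of Definition \ref{def:sws}, which is phrased through Yang's extension $u^*$, so your limit passage would have to be checked against that specific form, including the vanishing of the extra $\varepsilon$-term in the \emph{localized} identity), and its covering argument is exactly your step (3), run however with the extension density $y^b|\overline\nabla(\nabla u)^\flat|^2$ over the disjoint sets $Q^*_{r_i}$, the improvement from finiteness to $\mathcal{P}^{5-4\alpha}=0$ coming from absolute continuity of that integral on the thin slab $\{0\le y\le\delta\}$.

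The genuine gap is step (2), which is the heart of the paper and which your sketch does not prove: you yourself flag the nonlocal dissipation and tail terms as ``the main obstacle'' and defer them to an unspecified De Giorgi/Caccioppoli or commutator analysis. That is precisely where all the work lies: the paper needs the local energy inequality in the extended variables (Lemma \ref{lemma:suitable}), a compactness lemma for normalized solutions (Lemma \ref{lem:lions}), interior estimates for the \emph{nonlocal} linearized system in which a nonlocal excess must be controlled (Lemma \ref{lem:hoelder}), and an excess-decay iteration (Proposition \ref{prop:exc_decay}) in which the tail functional is carried along at every scale before Theorems \ref{thm:eps_reg-variant} and \ref{t:CKN} can be obtained. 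Moreover, your proposed criterion, smallness of $\limsup_{r\to0}\Phi(r)$ with $\Phi(r)=r^{4\alpha-5}\int_{Q_r}|(-\Delta)^{\alpha/2}u|^2$, is not what the paper proves, and it is not clear it can be run as stated: $|(-\Delta)^{\alpha/2}u|^2$ is a nonlocal density, so its smallness on small cylinders does not obviously control either the local Dirichlet energy $\int_{Q_r}|\nabla u|^2$ or the tail functional $\fT$ appearing in the hypothesis of Theorem \ref{thm:eps_reg-variant}; localizing the fractional Laplacian produces commutator and tail errors invisible to your $\limsup$ hypothesis. The paper's resolution is to replace $\Phi$ by $\fE(r)=r^{4\alpha-5}\int_{Q^*_r}y^b|\overline\nabla(\nabla u)^\flat|^2$, a genuinely local density in the half-space extension: this is what makes the Poincar\'e-type Lemma \ref{l:BFT_small} possible, bounding simultaneously the local Dirichlet energy, the local $L^2$ norm and the tail by $\limsup_r\fE(r)$, and what makes the absolute-continuity step of the covering immediate. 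Until you either prove your criterion with $\Phi$ or switch to the extension quantity and supply these four ingredients, the proof is incomplete.
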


For $\alpha=1$ the statement above coincides with the one of Caffarelli, Kohn and Nirenberg and for $\alpha \in (\frac{3}{4}, 1)$ the same result has been shown by Tang and Yu in \cite{TangYu}. 
%Moreover,
%as already mentioned, Leray--Hopf weak solutions coincide with the regular ones as long as they exist (assuming the initial data is also sufficiently %regular). 
Given $E\subset\RR^3\times\RR$, it is trivial to prove that
\[
\Haus{\frac \beta 2}\left(\left\{t:\,(\RR^3\times\{t\})\cap E\neq\emptyset\right\}\right)\le C_\beta\PHaus{\beta}(E)
\]
and
\[
\Haus{\beta}\left((\RR^3\times\{t\})\cap E\right)\le C_\beta\PHaus{\beta}(E)\quad\forall\,t\in\RR\,.
\]
We thus recover:
\begin{itemize}
\item a strengthened version of the theorem of Katz and Pavlovi\'c: if a classical solution blows up, the singular set at the first blow-up time has $0$ Hausdorff $\mathcal{H}^{5-4\alpha}$ measure (recall that classical solutions and Leray--Hopf weak solutions coincide as long as the first exist); 
\item the existence of regular solutions for $\alpha = \frac{5}{4}$, because $\mathcal{P}^0$ is the ``counting measure'' and hence for $\alpha=\frac{5}{4}$ the singular set is empty;
\item the generalized Leray's bound on singular times given in \cite{JiuWang}, namely $\mathcal{H}^{(5-4\alpha)/2} ({\rm Sing}_T\, u) =0$ (however the result in \cite{JiuWang} is stronger, since it is proved for {\em any} Leray--Hopf weak solution).
\end{itemize}  

As it is the case for the Scheffer and Caffarelli--Kohn--Nirenberg regularity theory, our theorem is in fact more general. In particular we can introduce a suitable notion of weak solution, which generalizes the one of Caffarelli--Kohn--Nirenberg and which we therefore call as well {\em suitable weak solution}. We then prove their existence and their regularity independently. In particular the bound of Theorem \ref{t:main}, and its consequences, hold for {\em any} suitable weak solution. 

\begin{theorem}\label{thm-intro:sing-set-dim}
Let $(u,p)$ be a suitable weak solution of \eqref{e:NS_alfa} in $\mathbb R^3 \times (0, T)$ as in Definition \ref{def:sws}. Then
\begin{equation}\label{est:dim_singset}\mathcal P ^{5-4\alpha}({\rm Sing \, u})= 0.
\end{equation}
\end{theorem}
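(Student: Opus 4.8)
The plan is to follow the Caffarelli–Kohn–Nirenberg strategy adapted to the fractional dissipation. The core is an $\varepsilon$-regularity theorem: there exists a dimensional constant $\varepsilon_0 > 0$ such that if $(u,p)$ is a suitable weak solution and
\[
\limsup_{r \to 0} \frac{1}{r^{5-4\alpha}} \int_{Q_r(x_0,t_0)} |\nabla u|^2 \, dx\, dt \; \le \; \varepsilon_0,
\]
then $(x_0,t_0)$ is a regular point. Granting such a statement, the theorem follows by a standard covering argument: the set of points where the above $\limsup$ exceeds $\varepsilon_0$ contains ${\rm Sing}\, u$, and since $|\nabla u|^2 \in L^1$ by the energy inequality built into the definition of suitable weak solution, a Vitali-type covering lemma (using parabolic cylinders $Q_r$, which form a Vitali family for $\PHaus{5-4\alpha}$) shows this set has $\PHaus{5-4\alpha}$ measure zero. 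So the real content is the $\varepsilon$-regularity statement, and within it two ingredients: a \emph{local energy inequality} (the analogue of the CKN generalized energy inequality, which must be part of Definition \ref{def:sws}) and a way to run the Caffarelli–Kohn–Nirenberg iteration despite the nonlocality of $(-\Delta)^\alpha$.

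I would set up the iteration in terms of the dimensionless quantities
\[
A(r) = \sup_{t_0 - r^{2\alpha} < t \le t_0} \frac{1}{r^{3-2\alpha}} \int_{B_r} |u|^2\,dx, \quad
E(r) = \frac{1}{r^{3-2\alpha}} \int_{Q_r} |\nabla u|^2, \quad
C(r) = \frac{1}{r^{2(3-2\alpha)}} \int_{Q_r} |u|^3, \quad
D(r) = \frac{1}{r^{\cdots}} \int_{Q_r} |p|^{3/2},
\]
with exponents chosen to be invariant under the natural scaling $u_\lambda(x,t) = \lambda^{2\alpha-1} u(\lambda x, \lambda^{2\alpha} t)$. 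The strategy is: first show $C(r)$ and the pressure term $D(r)$ can be bounded by interpolation in terms of $A$ and $E$ on a comparable (slightly larger) scale, plus a harmless tail contribution from the pressure; then feed these into the local energy inequality to get a decay estimate of the form $A(\theta r) + E(\theta r) \le C \theta^{\gamma}(A(r)+E(r)) + (\text{lower order})$, which under the smallness hypothesis closes to give $A(r) + E(r) \to 0$ fast enough; finally upgrade this to boundedness, then continuity, of $u$ near $(x_0,t_0)$ via a bootstrap on the Duhamel/heat-semigroup representation for the fractional heat operator $\partial_t + (-\Delta)^\alpha$.

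The main obstacle—and the place where the argument genuinely departs from the classical $\alpha = 1$ case—is the nonlocality of $(-\Delta)^\alpha$, which enters in three ways. First, the pressure is no longer given by a purely local Calderón–Zygmund operator applied to $u \otimes u$; one still has $-\Delta p = \div \div (u \otimes u)$, so the pressure itself is as local as before, but the commutator estimates needed to localize the energy inequality must absorb the fractional operator, producing genuinely nonlocal error terms that see $u$ on all of space. These must be controlled by splitting into a local piece (handled as usual) and a tail, and the tail must be shown to be dominated by the $L^2$ energy of $u$ on large balls times a negative power of the scale ratio. Second, the fractional heat kernel $P^\alpha_t$ decays only polynomially, not exponentially, so the final bootstrap to continuity is more delicate: I would use the kernel bounds $|P^\alpha_t(x)| \lesssim t/(t^{1/\alpha} + |x|)^{3+2\alpha}$ and iterate local $L^q \to L^{q'}$ smoothing estimates, tracking the tail carefully at each step. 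Third, the localization of the equation for $u(-\Delta)^\alpha u$ against a cutoff $\varphi$ does not integrate by parts cleanly; one needs the fractional integration-by-parts/commutator identity $\int \varphi\, u \cdot (-\Delta)^\alpha u = \tfrac12 \int (-\Delta)^{\alpha/2}(\varphi)\,|u|^2 + \text{(commutator)}$ and careful estimates on $[(-\Delta)^{\alpha/2}, \varphi]$ via the pointwise representation of the fractional Laplacian. Once these three nonlocal nuisances are tamed—each by the same philosophy of "local part behaves classically, tail is lower order on parabolic cylinders"—the CKN iteration machinery goes through essentially verbatim with the exponent $5-4\alpha$ appearing wherever $1$ appeared in the original.
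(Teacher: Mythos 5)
There is a genuine gap, and it sits exactly at the key claimed lemma. Your $\varepsilon$-regularity criterion is stated for the quantity $r^{-(5-4\alpha)}\int_{Q_r}|\nabla u|^2$, but for $\alpha>1$ this is \emph{not} scale-invariant: under the natural scaling $u_r(x,t)=r^{2\alpha-1}u(rx,r^{2\alpha}t)$ the invariant quantity is $\fB(r)=r^{-(3-2\alpha)}\int_{Q_r}|\nabla u|^2$, and your quantity equals $r^{2\alpha-2}\fB(r)$. Hence your smallness hypothesis only gives $\fB(r)\le \varepsilon_0\, r^{2-2\alpha}$, which blows up as $r\to 0$ and provides no smallness of any of the dimensionless quantities $A,E,C,D$ on which you propose to run the CKN iteration; the decay scheme you sketch cannot close from this hypothesis. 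If instead you state the criterion with the correct weight $r^{3-2\alpha}$ (the honest analogue of the classical $\frac1r\int_{Q_r}|\nabla u|^2$), the iteration has a chance, but then the Vitali covering argument with $|\nabla u|^2\in L^1$ only yields $\PHaus{3-2\alpha}({\rm Sing}\,u)=0$, which is strictly weaker than the claimed $\PHaus{5-4\alpha}$ bound (note $3-2\alpha>5-4\alpha$; at $\alpha=\frac54$ the theorem forces an empty singular set, which your corrected version would not give). A further, related issue: a purely local hypothesis on $\int_{Q_r}|\nabla u|^2$ gives no control of the nonlocal tail of $u$, which must enter the smallness assumption (or be controlled by it) because $(-\Delta)^\alpha$ sees $u$ globally.

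The paper resolves precisely this tension by measuring the dissipation through the Caffarelli--Silvestre extension of $\nabla u$: the criterion of Theorem \ref{t:CKN} is $\limsup_{r\to0}\fE(u;x,t,r)<\delta$ with $\fE(u;x,t,r)=r^{-(5-4\alpha)}\int_{Q^*_r(x,t)}y^{b}|\overline\nabla(\nabla u)^\flat|^2$. This quantity is simultaneously scale-invariant (the weight $y^b$ and the extra integration over $y\in(0,r)$ supply exactly the missing factor $r^{4-2\alpha}$), globally integrable by the energy inequality \eqref{e:energia_finita}, controls the local quantities \emph{and} the tail (Lemma \ref{l:BFT_small}), and lives on extended cylinders $Q^*_{r}\subset\{0\le y<r\}$. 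The last point is what upgrades the covering bound to the full conclusion: after Vitali, the disjoint extended cylinders at scales $r_k<\delta$ are confined to the slab $\{y<\delta\}$, so absolute continuity of $y^b|\overline\nabla(\nabla u)^\flat|^2\,dx\,dy\,dt$ forces $\PHaus{5-4\alpha}_\delta({\rm Sing}\,u)\to 0$, i.e. measure zero and not merely finiteness. Without some device of this kind (an integrand that is globally $L^1$, scales like $r^{5-4\alpha}$ on the covering sets, and encodes the nonlocal dissipation), your outline cannot reach the exponent $5-4\alpha$.
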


Our theorem is in part inspired by the paper of Tang and Yu \cite{TangYu}, where the authors consider the hypodissipative range $\alpha \in (\frac{3}{4}, 1)$. As in their case, our notion of suitable weak solution uses the extension idea introduced by Caffarelli and Silvestre in \cite{CS} to deal with the fractional Laplacian, more specifically we take advantage of a suitable version for exponents $\alpha \in (1,2)$, introduced by Yang in \cite{Yang}. However there are important differences between our work and \cite{TangYu}. 

\subsection{$\varepsilon$--regularity theorem and stability of regular points} One part of our argument for Theorem \ref{thm-intro:sing-set-dim} has an independent interest. As already mentioned, the paper by Caffarelli, Kohn and Nirenberg built on previous works of Scheffer \cite{Scheffer1,Scheffer2}, where the author proved the very first space-time partial regularity result for the Navier--Stokes equations. In particular he proved the $\varepsilon$-regularity statement which is still the starting point of most of the works in the area.
We establish here a suitable generalization of Scheffer's theorem. In what follows, if $f: \mathbb R^3 \times (0, T) \to [0, \infty)$, $\mathcal{M} f$ denotes the maximal function
\[
\mathcal{M} f (x,t) = \sup_{r>0} \frac{1}{r^3} \int_{B_r (x)} f (y,t)\, dy\, .
\]

\begin{theorem}\label{thm:eps_reg-massimale}
There exist positive constants $\varepsilon>0$ and $\kappa>0$ depending only on $\alpha$ such that, if the pair $(u,p)$ is a suitable weak solution of \eqref{e:NS_alfa} in the slab $\RR^3 \times(-r^{2\alpha} + t_0, t_0]$ and satisfies
\begin{equation}\label{eccessivoealternativo}
\frac{1}{r^{6-4\alpha}} \int_{Q_{2r} (x_0, t_0)} \left({\mathcal M}|u|^2+|p|\right)^{\sfrac{3}{2}} \,dx\,dt<\varepsilon\,,
\end{equation}
then $u\in C^{\kappa}\left(Q_r (x_0, t_0)\right)$. For $r\leq 1$ we have an explicit estimate of the form $\|u\|_{C^{\kappa}} \leq C r^{1- 2\alpha (\kappa +1)}$. Moreover the constants are independent of $\alpha \in (1, \frac{5}{4}]$ in the following sense: if
$\alpha \in [\alpha_0, \frac{5}{4}] \subset (1,2]$, then there are positive $\eps_0 (\alpha_0), \kappa_0 (\alpha_0), C (\alpha_0)$ such that $\varepsilon (\alpha) > \varepsilon_0 (\alpha_0)$, $\kappa (\alpha) > \kappa_0 (\alpha_0)$ and $C\leq C (\alpha_0)$. 
\end{theorem}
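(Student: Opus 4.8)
The plan is to follow the classical Caffarelli–Kohn–Nirenberg strategy, adapted to the fractional setting via the Caffarelli–Silvestre/Yang extension. The overall scheme is: (1) recast the $\varepsilon$-regularity condition in terms of a few scale-invariant quantities; (2) prove a ``decay lemma'' stating that if these quantities are small at scale $2r$, then a suitable combination decays at scale $\theta r$ for some fixed $\theta \in (0,1)$; (3) iterate the decay lemma to obtain smallness (indeed Morrey-type decay) of $u$ on a whole sequence of shrinking cylinders, which by a Campanato-type characterization gives $u \in C^\kappa$ on $Q_r$; (4) track the $\alpha$-dependence of all constants to get the claimed uniformity for $\alpha \in [\alpha_0, \frac{5}{4}]$.

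\medskip

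More concretely, I would first introduce the natural dimensionless quantities at scale $r$ about $(x_0,t_0)$, e.g.\ $\mathcal{E}(r) = r^{4\alpha-6}\int_{Q_r}\big(\mathcal{M}|u|^2 + |p|\big)^{3/2}$ and the ``energy-type'' excess built from $\int |\nabla_{x,z} u^*|^2$ of the Yang extension $u^*$ weighted by the appropriate power of the extension variable $z$ (the weight $z^{1-2\sigma}$ with $\sigma = \alpha - 1$ in Yang's higher-order extension). The hypothesis \eqref{eccessivoealternativo} controls $\mathcal{E}(2r)$. The heart of the matter is a compactness/contradiction argument: assuming a sequence of suitable weak solutions $(u_k,p_k)$ on $Q_2$ with the dimensionless quantities tending to zero but the $C^\kappa$-type conclusion failing at scale $\theta$, one extracts a limit which — thanks to the local energy inequality encoded in the definition of suitable weak solution, the extension, and the pressure estimate — solves a \emph{linear} limiting problem (the nonlinear term $(u\cdot\nabla)u$ being quadratically small and hence negligible in the limit). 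For the linear limit problem one has interior regularity and the desired decay by hand, contradicting the failure of the conclusion for large $k$. Then one closes the iteration: the decay estimate at one step, combined with the scaling of the pressure (which must be split into a harmonic/regular part and a part controlled by $\mathcal{M}|u|^2$, exactly the role played by the maximal function in \eqref{eccessivoealternativo}), gives $\mathcal{E}(\theta^{j}r) \le C\theta^{j(2\kappa + \text{something})}$, and Campanato's theorem converts this into Hölder continuity with the explicit exponent; the factor $r^{1-2\alpha(\kappa+1)}$ comes from undoing the parabolic scaling.

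\medskip

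I expect the main obstacle to be the \emph{pressure handling combined with the nonlocality of $(-\Delta)^\alpha$}. In the classical CKN proof the pressure enters through a Calderón–Zygmund estimate and the local energy inequality; here the extension machinery of Yang for $\alpha \in (1,2)$ is more delicate than the $\alpha \in (0,1)$ case used by Tang–Yu (it involves a fourth-order extension PDE with two boundary conditions rather than one Neumann-type condition), so the local energy inequality and the resulting ``good'' test-function computations are heavier, and one must carefully control the far-field contribution of the nonlocal dissipation — this is precisely why the statement is phrased with $\mathcal{M}|u|^2$ rather than $|u|^2$, since the maximal function is what survives the commutator/tail estimates. A secondary difficulty is bookkeeping the dependence of $\varepsilon, \kappa, C$ on $\alpha$: one must verify that all the constants from the extension (the weight exponents, the trace and Sobolev constants, the Poincaré-type inequalities with the $A_2$ weight $z^{1-2\sigma}$) and from the compactness step remain bounded and bounded away from degeneracy as $\alpha$ ranges over a compact subinterval of $(1, \frac{5}{4}]$ — this is a uniformity statement that is routine in principle but requires that no constant blow up as $\alpha \to 1^+$ or $\alpha \to \frac{5}{4}$, which should hold since $\sigma = \alpha - 1$ stays in a compact subset of $(0, \frac14]$.
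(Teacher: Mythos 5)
Your overall strategy --- a blow-up/compactness argument in which rescaled suitable weak solutions converge, via the local energy inequality and the extension, to a solution of a linear constant-coefficient problem with good interior estimates, followed by iteration of a scale-invariant decay, a Campanato-type conclusion, a scaling argument for the explicit bound, and bookkeeping of the $\alpha$-dependence --- is the paper's route: this is precisely the content of Lemma~\ref{lem:lions}, Lemma~\ref{lem:hoelder}, Proposition~\ref{prop:exc_decay} and Theorem~\ref{thm:eps_reg-variant} (with the caveat that in the limit the quadratic term vanishes but a constant drift $M_0\cdot\nabla v$ survives, so the limiting system is \eqref{e:linNS_1} rather than a pure fractional Stokes system).

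The step of your plan that would not go through as written is the iteration/conclusion: you propose to propagate the maximal-function quantity $\mathcal{E}(\rho)=\rho^{4\alpha-6}\int_{Q_\rho}\big(\mathcal{M}|u|^2+|p|\big)^{3/2}$ down the scales and then invoke Campanato. This fails on two counts. First, $\mathcal{M}|u|^2$ is a global object: from local regularity gains one cannot extract any decay of $\int_{Q_{\theta^j r}}(\mathcal{M}|u|^2)^{3/2}$ beyond trivial monotonicity, and the hypothesis only provides $\mathcal{M}|u|^2\in L^{3/2}(Q_{2r})$, so the negative power $\rho^{4\alpha-6}$ cannot be beaten along the iteration. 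Second, even granting some decay of $\mathcal{E}$, Campanato's criterion requires decay of the mean oscillation of $u$ (with subtracted averages), which $\mathcal{E}$ does not control. The paper uses the maximal function exactly once, at the top scale: for every $(x_0,t_0)\in Q_1$ and every $R\ge \tfrac14$ one has $R^{-3}\int_{B_R(x_0)}|u|^2\,dx\le C\int_{B_{1/4}(x_0)}\mathcal{M}|u|^2\,dx$, so \eqref{eccessivoealternativo} forces the full excess $E=E^V+E^P+E^{nl}$ --- an $L^3$ oscillation of $u$, an $L^{3/2}$ oscillation of $p$, and a tail term already carrying the correct scaling weight --- to lie below the threshold $\varepsilon_0$ at scale $1$, uniformly in the base point; from there the iteration runs on $E$ alone (Proposition~\ref{prop:exc_decay} and Step~2 of the proof of Theorem~\ref{thm:eps_reg-variant}), and Morrey/Campanato is applied to the oscillation part $E^V$, while the explicit bound $\|u\|_{C^\kappa}\le Cr^{1-2\alpha(\kappa+1)}$ indeed comes from undoing the scaling \eqref{e:scaling}. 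So you have identified all the right ingredients, including the correct role of $\mathcal{M}|u|^2$ as a bound on the nonlocal tails, but the object to iterate must be the oscillation-based excess with its built-in weighted tail, not the maximal-function functional itself.
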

With Theorem \ref{thm:eps_reg-massimale} it is possible to estimate the box-counting dimension of the singularity, showing in particular that the box-counting dimension of the singular set converges to $0$ when $\alpha\to 5/4$.

\begin{corol}\label{c:box}
If $(u,p)$ is a suitable weak solution of \eqref{e:NS_alfa} in $\mathbb R^3\times (0,T)$, then for every positive $t>0$ the box-counting dimension of
${\rm Sing}\, u\cap (\RR^3 \times [t, \infty))$ is at most ${\frac{15 - 2\alpha - 8\alpha^2}{3}}$.
\end{corol}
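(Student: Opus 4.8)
The plan is to derive the box-counting estimate directly from the $\varepsilon$-regularity criterion of Theorem \ref{thm:eps_reg-massimale} by a Vitali-type covering argument, exactly as in the classical Caffarelli--Kohn--Nirenberg setting but tracking the $\alpha$-dependence of the exponents. First I would fix $t>0$ and a compact time window $[t, T')\subset (t,\infty)$ (the singular set near time $t$ is relatively closed and, by standard energy considerations, contained in a compact subset of $\RR^3\times[t,\infty)$ once the solution becomes locally regular at spatial infinity), so that it suffices to bound the number of parabolic cylinders $Q_r(x_i,t_i)$ of a fixed small radius $r$ needed to cover ${\rm Sing}\,u$ within that window. The key point is the contrapositive of Theorem \ref{thm:eps_reg-massimale}: at every singular point $(x_0,t_0)$ one must have
\begin{equation}\label{e:box-lb}
\int_{Q_{2r}(x_0,t_0)}\bigl({\mathcal M}|u|^2+|p|\bigr)^{\sfrac32}\,dx\,dt\ \ge\ \varepsilon\, r^{6-4\alpha}
\end{equation}
for all sufficiently small $r$, where $\varepsilon=\varepsilon(\alpha)>0$ is the constant from that theorem.

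Next I would extract a maximal $5r$-separated family of singular points $(x_i,t_i)$, $i=1,\dots,N(r)$, in the parabolic metric; by maximality the cylinders $Q_r(x_i,t_i)$ cover ${\rm Sing}\,u$ in the chosen window, while the enlarged cylinders $Q_{2r}(x_i,t_i)$ are pairwise disjoint. Summing \eqref{e:box-lb} over $i$ and using disjointness gives
\begin{equation}\label{e:box-sum}
N(r)\,\varepsilon\, r^{6-4\alpha}\ \le\ \sum_{i=1}^{N(r)}\int_{Q_{2r}(x_i,t_i)}\bigl({\mathcal M}|u|^2+|p|\bigr)^{\sfrac32}\,dx\,dt\ \le\ \int_{K}\bigl({\mathcal M}|u|^2+|p|\bigr)^{\sfrac32}\,dx\,dt\,,
\end{equation}
where $K$ is a fixed compact neighbourhood of the relevant part of the singular set. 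The heart of the matter is therefore to show that the right-hand side of \eqref{e:box-sum} is finite, i.e. that ${\mathcal M}|u|^2+|p|\in L^{\sfrac32}_{\rm loc}$ away from the initial time. For the pressure this is part of the definition of a suitable weak solution (Definition \ref{def:sws}); for the velocity term one combines the Leray--Hopf bounds $u\in L^\infty_t L^2_x\cap L^2_t H^\alpha_x$ with the parabolic Sobolev embedding adapted to the $(-\Delta)^\alpha$ scaling to obtain $u\in L^q_{t,x}$ for a suitable $q>3$, and then invokes the strong-type $(q/2,q/2)$ bound for the spatial maximal function ${\mathcal M}$ together with the improved local regularity near positive times supplied by Leray's theory. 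This yields a finite constant $A=A(K,u,p)$ independent of $r$, hence $N(r)\le A\,\varepsilon^{-1} r^{4\alpha-6}$.

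Finally, unwinding the definition of the (upper) box-counting dimension $d$ with respect to the parabolic metric, $N(r)\lesssim r^{-s}$ for all small $r$ forces $d\le s$ in parabolic scaling; converting to the Euclidean box dimension multiplies exponents by the anisotropy factor, so a parabolic-dimension bound of $6-4\alpha$ for a set in space-time with time weighted by $2\alpha$ translates, after the elementary arithmetic $\frac{(2\alpha-1)(6-4\alpha)}{2\alpha}+ \text{(spatial part)}$, into the Euclidean bound $\frac{15-2\alpha-8\alpha^2}{3}$; I would carry out this bookkeeping explicitly, checking that at $\alpha=\frac54$ it returns $0$ consistently with the last bullet after Theorem \ref{t:main}. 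The main obstacle I anticipate is not the covering argument, which is routine, but establishing the integrability ${\mathcal M}|u|^2\in L^{\sfrac32}_{\rm loc}$ with the correct, $\alpha$-uniform constants: one must be careful that the parabolic Sobolev exponent stays above $3$ throughout the range $(1,\frac54]$ and that the maximal-function bound does not degenerate, which is exactly where the hyperdissipative gain over the $\alpha=1$ case is used.
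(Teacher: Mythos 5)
Your covering argument is sound as far as it goes, but it proves a weaker statement than the corollary, and the step you invoke to bridge the gap does not exist. Using Theorem \ref{thm:eps_reg-massimale} in contrapositive form with the exponent $\sfrac{3}{2}$ and summing over disjoint cylinders gives $N(r)\lesssim r^{4\alpha-6}$, i.e.\ a covering exponent of $6-4\alpha$. Since a parabolic cylinder $Q_r$ with $r\le 1$ has Euclidean diameter comparable to $r$ (its time extent $r^{2\alpha}$ is \emph{smaller} than $r$), a cover by $N(r)$ such cylinders is already a cover by $N(r)$ sets of Euclidean diameter $\lesssim r$: there is no favourable ``anisotropy factor'' in passing from the parabolic to the Euclidean box dimension, and subdividing each cylinder into Euclidean balls of radius $r^{2\alpha}$ only increases the count. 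So your argument yields ${\rm Dim}_b\le 6-4\alpha$, which is strictly larger than $\frac{15-2\alpha-8\alpha^2}{3}=\frac{(5-4\alpha)(3+2\alpha)}{3}$ throughout $(1,\frac54]$ and equals $1$ rather than $0$ at $\alpha=\frac54$; no bookkeeping of the type you sketch can repair this, which is consistent with the fact that your proposed conversion formula does not return $0$ at $\alpha=\frac54$.

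The missing idea is to use higher integrability \emph{at the scale of each cylinder}, not merely to make the right-hand side of your summed estimate finite. From $u\in L^\infty_tL^2_x\cap L^2_tH^\alpha_x$, Sobolev embedding and interpolation give $u\in L^{\frac{6+4\alpha}{3}}(\RR^3\times(0,\infty))$, hence by Calder\'on--Zygmund and the maximal function theorem $M:=\int_{\RR^3\times(0,\infty)}\left(\mathcal{M}|u|^2+|p|\right)^{\frac{3+2\alpha}{3}}<\infty$ globally (so no compactness of the singular set or appeal to Leray's local smoothing is needed). H\"older's inequality on each cylinder gives
\begin{equation}
\frac{1}{r^{6-4\alpha}}\int_{Q_r(y,s)}\left(\mathcal{M}|u|^2+|p|\right)^{\sfrac{3}{2}}
\le\left(\frac{1}{r^{\frac{15-2\alpha-8\alpha^2}{3}}}\int_{Q_r(y,s)}\left(\mathcal{M}|u|^2+|p|\right)^{\frac{3+2\alpha}{3}}\right)^{\frac{9}{6+4\alpha}},
\end{equation}
so at every singular point $(y,s)$ and every $r\in(0,s^{\frac{1}{2\alpha}})$ the $\eps$-regularity criterion forces $\int_{Q_r(y,s)}\left(\mathcal{M}|u|^2+|p|\right)^{\frac{3+2\alpha}{3}}\ge\varepsilon^{\frac{6+4\alpha}{9}}r^{\frac{15-2\alpha-8\alpha^2}{3}}$. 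Feeding this sharpened lower bound into your Vitali covering argument, with $M$ as the total mass, yields $N'(\delta)\lesssim\delta^{-\frac{15-2\alpha-8\alpha^2}{3}}$ directly and hence the stated dimension bound, with no parabolic-to-Euclidean conversion at all; this is exactly how the paper proceeds. Note that you already had the key ingredient (integrability of $u$ in some $L^q$ with $q=\frac{6+4\alpha}{3}>3$) but deployed it only to prove $L^{\sfrac32}_{\rm loc}$ finiteness, where it cannot improve the exponent.
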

Moreover, Theorem \ref{thm:eps_reg-massimale} has the interesting consequence that the set of regular points is stable under perturbations. 

\begin{corol}\label{c:stab}
Assume that:
\begin{itemize}
\item $\alpha_k \to \alpha \in (1, \frac{5}{4}]$, $(u_k, p_k)$ is a suitable weak solution of \eqref{e:NS_alfa} with $\alpha = \alpha_k$ on $\mathbb R^3 \times (0, T)$;
\item $(u,p)$ is a suitable weak solution on $\mathbb R^3 \times (0,T)$ of \eqref{e:NS_alfa};
\item $u_k \rightharpoonup u$ in $L^2 (\RR^3 \times (0, T))$;
\item $u$ is bounded in some $Q_{4r} (x_0, t_0)$.
\end{itemize}
Then for $k$ large enough $u_k$ is H\"older continuous in $Q_r (x_0, t_0)$.
\end{corol}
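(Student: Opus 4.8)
The plan is to verify, for $k$ large, the smallness assumption \eqref{eccessivoealternativo} of Theorem~\ref{thm:eps_reg-massimale} at a fixed small scale $\rho$ around every point of the compact set $\overline{Q_r(x_0,t_0)}$, and then to conclude by a finite covering. The hypothesis $\alpha_k\to\alpha$ is used only through the last assertion of Theorem~\ref{thm:eps_reg-massimale}: for $k$ large one has $\eps(\alpha_k)\ge\eps_0>0$ and $\kappa(\alpha_k)\ge\kappa_0>0$, and $\rho^{6-4\alpha_k}\to\rho^{6-4\alpha}$, so it suffices to make the relevant rescaled integrals eventually smaller than $\eps_0$.

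\emph{Step 1: compactness.} Using the a priori bounds that a suitable weak solution satisfies by Definition~\ref{def:sws} — in particular the energy inequality, which gives a uniform control of $u_k$ in $L^\infty_t L^2_x\cap L^2_t H^{\alpha_k}_x$ and of $p_k$ in $L^{3/2}_{\mathrm{loc}}$ — together with equation~\eqref{e:NS_alfa} (which controls $\partial_t u_k$ in a negative Sobolev space, uniformly for $\alpha_k$ in a compact subinterval of $(1,\sfrac{5}{4}]$) and the Aubin--Lions lemma, I would upgrade the weak convergence $u_k\rightharpoonup u$ to strong convergence $u_k\to u$ in $L^3_{\mathrm{loc}}(\RR^3\times(0,T))$. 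Then $u_k\otimes u_k\to u\otimes u$ in $L^{3/2}_{\mathrm{loc}}$ and, since also $\alpha_k\to\alpha$, one may pass to the limit in \eqref{e:NS_alfa}; splitting the pressure in the usual way into a Calder\'on--Zygmund part (which converges in $L^{3/2}_{\mathrm{loc}}$ because $u_k\otimes u_k$ does) and a part harmonic in $x$ on a fixed neighbourhood of $(x_0,t_0)$ (which converges locally uniformly in that variable, by interior estimates and the uniform $L^{3/2}$ bound, the limit being identified through the equation), one also gets $p_k\to p$ in $L^{3/2}_{\mathrm{loc}}$.

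\emph{Step 2: the limit, then the transfer.} Since $u$ is bounded on $Q_{4r}(x_0,t_0)$ and lies in $L^\infty_t L^2_x$, for $(x_1,t_1)\in\overline{Q_r(x_0,t_0)}$ and $\rho<r/2$ the function $\mathcal M|u|^2$ is bounded on $Q_{2\rho}(x_1,t_1)$ up to a fixed tail $O(r^{-3}\|u\|_{L^\infty_t L^2_x}^2)$ coming from radii $\gtrsim r$; and the locally defined pressure lies in $L^q(Q_{2r}(x_0,t_0))$ for arbitrarily large $q$ (its Calder\'on--Zygmund part, since $|u|^2$ is bounded there) plus a part controlled in $L^\infty_x$ by $\|u(\cdot,t)\|_{L^3_x}^2$, which lies in $L^{2\alpha}_t$ because $u\in L^{4\alpha}_t L^3_x$ by interpolation of $L^\infty_t L^2_x$ with $L^2_t H^{\alpha}_x$. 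As $\dim Q_{2\rho}=3+2\alpha$ and the scaling exponents $6\alpha-3$ and $6\alpha-\sfrac{9}{2}$ are positive on $(1,\sfrac{5}{4}]$, H\"older's inequality yields
\[
\lim_{\rho\to 0}\ \sup_{(x_1,t_1)\in\overline{Q_r(x_0,t_0)}}\ \frac{1}{\rho^{6-4\alpha}}\int_{Q_{2\rho}(x_1,t_1)}\big(\mathcal M|u|^2+|p|\big)^{\sfrac{3}{2}}\,dx\,dt=0\,.
\]
To pass from $u$ to $u_k$, fix $(x_1,t_1)$ and write $\mathcal M|u_k|^2\le\mathcal M\big[\mathbf 1_{B_{2r}(x_1)}|u_k|^2\big]+\mathcal M\big[\mathbf 1_{B_{2r}(x_1)^c}|u_k|^2\big]$: the first term converges in $L^{3/2}(Q_{2\rho}(x_1,t_1))$ to $\mathcal M\big[\mathbf 1_{B_{2r}(x_1)}|u|^2\big]\le\mathcal M|u|^2$, by Step~1 and the $L^{3/2}$ maximal inequality; the second is $\le C r^{-3}\sup_k\|u_k\|_{L^\infty_t L^2_x}^2$ on $Q_{2\rho}(x_1,t_1)$ and hence contributes $O(\rho^{6\alpha_k-3})$ to the rescaled integral; and $|p_k|\to|p|$ in $L^{3/2}(Q_{2\rho}(x_1,t_1))$ by Step~1. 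Combining these with the display above and with $\rho^{6-4\alpha_k}\to\rho^{6-4\alpha}$, one obtains that for a small enough fixed $\rho$ and all $k$ large
\[
\frac{1}{\rho^{6-4\alpha_k}}\int_{Q_{2\rho}(x_1,t_1)}\big(\mathcal M|u_k|^2+|p_k|\big)^{\sfrac{3}{2}}\,dx\,dt\ <\ \eps_0\ \le\ \eps(\alpha_k)\,,
\]
so Theorem~\ref{thm:eps_reg-massimale} gives $u_k\in C^{\kappa(\alpha_k)}(Q_\rho(x_1,t_1))$. Covering $\overline{Q_r(x_0,t_0)}$ by finitely many such $Q_\rho(x_1^j,t_1^j)$ and taking $k$ larger than the finitely many corresponding thresholds shows that $u_k$ is H\"older continuous in $Q_r(x_0,t_0)$.

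The main obstacle is Step~1: one must first extract the uniform a priori bounds for the family $(u_k,p_k)$ from Definition~\ref{def:sws} and the energy inequality, and then — more delicately — control the two \emph{nonlocal} objects appearing in \eqref{eccessivoealternativo}, namely the pressure (through the splitting into a Calder\'on--Zygmund piece and a spatially harmonic piece) and the maximal function (through the splitting into a local piece, handled by the strong $L^3_{\mathrm{loc}}$ convergence of $u_k$, and a tail, absorbed by the uniform energy bound after rescaling), since the weak $L^2$ convergence of $u_k$ yields only lower semicontinuity of these quantities whereas \eqref{eccessivoealternativo} requires an upper bound.
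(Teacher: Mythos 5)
Your proposal is correct and follows essentially the same route as the paper: upgrade the weak convergence to strong local convergence of $u_k$ in $L^3$ and $p_k$ in $L^{3/2}$ by the compactness argument of Lemma \ref{lem:lions}, use the boundedness of $u$ on $Q_{4r}(x_0,t_0)$ (plus the energy bound and the pressure splitting) to make the quantity in \eqref{eccessivoealternativo} small at a fixed small scale uniformly over the compact cylinder, transfer this smallness to $(u_k,p_k)$ for large $k$, and invoke Theorem \ref{thm:eps_reg-massimale} with constants uniform in $\alpha$ near the limit. Your splitting of $\mathcal{M}|u_k|^2$ into a localized piece and a far-field tail, and your explicit tracking of $\varepsilon(\alpha_k)\ge\varepsilon_0$ and of the positive scaling exponents, are just more detailed versions of steps the paper states summarily.
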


In turn the latter statement, combined with the Leray weak-strong uniqueness and the existence of smooth solutions at the threshold $\alpha = \frac{5}{4}$, allows, in a suitable sense, to extend the existence of smooth global solutions slightly below $\frac{5}{4}$. One possible formulation is the following. 

\begin{corol}\label{c:reg}
Let $X\subset L^2 (\mathbb R^3;\RR^3)$ be a Banach space of divergence-free vector fields satisfying the following two requirements:
\begin{itemize}
\item[(LT)] For any $u_0\in X$ and any $\alpha \in [\frac{9}{8}, \frac{5}{4}]$ there is a classical solution of \eqref{e:NS_alfa}-\eqref{e:Cauchy} on a time
interval $[0, T( \|u_0\|_X)]$ depending only upon $\|u_0\|_X$ and not upon $\alpha$. 
\item[(WS)] Any Leray--Hopf weak solution has to coincide with the solution of {\rm (LT)} on the interval of existence of the latter.
\end{itemize}
Then, for any bounded set $Y\subset X$ there is $\alpha_0 (Y)< \frac{5}{4}$ with the following property: for any $\alpha \in [\alpha_0, \frac{5}{4}]$ and $u_0\in Y$ there is a unique Leray--Hopf weak solution of \eqref{e:NS_alfa}-\eqref{e:Cauchy} on $\mathbb R^3 \times (0, \infty)$, which in addition is smooth. 
\end{corol}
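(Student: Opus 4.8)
The plan is to argue by contradiction, reducing a hypothetical failure to the existence of a \emph{singular} suitable weak solution at the endpoint exponent $\alpha=\tfrac54$, which Theorem~\ref{thm-intro:sing-set-dim} forbids (the case $5-4\alpha=0$, where $\mathcal P^{0}$ is the counting measure). Suppose the statement is false. Then there are a bounded $Y\subset X$, initial data $u_0^k\in Y$, and exponents $\alpha_k\to\tfrac54$ for which the Leray--Hopf weak solution $(u_k,p_k)$ of \eqref{e:NS_alfa}--\eqref{e:Cauchy} with datum $u_0^k$ and exponent $\alpha_k$ --- which by Theorem~\ref{t:main} and the existence theory we may take to be a suitable weak solution in the sense of Definition~\ref{def:sws} --- is not smooth on $\RR^3\times(0,\infty)$; in particular $\alpha_k<\tfrac54$ (at $\alpha=\tfrac54$ the solution of Theorem~\ref{t:main} has empty singular set by the case $\mathcal P^{0}$ of Theorem~\ref{thm-intro:sing-set-dim}, hence is smooth, and is the unique Leray--Hopf solution by weak--strong uniqueness), and without loss of generality $\alpha_k\ge\tfrac98$. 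Set $M:=\sup_{v\in Y}\|v\|_{L^2}$. By hypothesis (LT) there is a classical solution of \eqref{e:NS_alfa}--\eqref{e:Cauchy} on $\RR^3\times[0,T]$ with $T=T(\sup_{v\in Y}\|v\|_X)>0$ independent of $k$, and by (WS) it agrees with $u_k$ there; hence $u_k$ is smooth on $\RR^3\times(0,T)$, so ${\rm Sing}\,u_k\subset\RR^3\times[T,\infty)$, and since $u_k$ is not smooth we may fix $(x_k,t_k)\in{\rm Sing}\,u_k$ with $t_k\ge T$.

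Next I would translate the picture so the singularity sits at the origin: set $w_k(y,s):=u_k(x_k+y,t_k+s)$ and $q_k(y,s):=p_k(x_k+y,t_k+s)$. Because $u_k$ is a suitable weak solution on $\RR^3\times(0,\infty)$ and $t_k-\tfrac T2\ge\tfrac T2>0$, the pair $(w_k,q_k)$ is a suitable weak solution of \eqref{e:NS_alfa} with exponent $\alpha_k$ on $\RR^3\times(-\tfrac T2,\infty)$, it has $(0,0)\in{\rm Sing}\,w_k$, and the energy inequality yields the $k$--uniform bounds $\sup_s\|w_k(\cdot,s)\|_{L^2}\le M$ and $\int\|(-\Delta)^{\alpha_k/2}w_k(\cdot,s)\|_{L^2}^2\,ds\le M^2$. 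Since $\alpha_k\ge\tfrac98$, these bounds, together with the equation (which controls $\partial_s w_k$ in a negative Sobolev space) and Aubin--Lions, give, along a subsequence, $w_k\to\omega$ strongly in $L^2_{\rm loc}(\RR^3\times(-\tfrac T2,\infty))$. I would then pass to the limit in the weak formulation, in the local energy inequality, and in the Caffarelli--Silvestre/Yang extension characterization --- exploiting $\alpha_k\to\tfrac54$ exactly as in the compactness arguments underlying Theorem~\ref{thm:eps_reg-massimale} and Corollary~\ref{c:stab} --- to conclude that $\omega$ is a suitable weak solution of \eqref{e:NS_alfa} with exponent $\alpha=\tfrac54$ on $\RR^3\times(-\tfrac T2,\infty)$.

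To close the argument, I claim $(0,0)\in{\rm Sing}\,\omega$. Indeed, if $\omega$ were regular at $(0,0)$ it would be bounded on some $Q_{4r}(0,0)$, and Corollary~\ref{c:stab} --- applied (after a harmless time shift) with $\alpha_k\to\alpha=\tfrac54$ and $w_k\rightharpoonup\omega$ in $L^2_{\rm loc}$ --- would force $w_k$ to be H\"older continuous on $Q_r(0,0)$ for all large $k$, contradicting $(0,0)\in{\rm Sing}\,w_k$. Hence ${\rm Sing}\,\omega\ne\emptyset$. But $\omega$ is a suitable weak solution at $\alpha=\tfrac54$, so Theorem~\ref{thm-intro:sing-set-dim} gives $\mathcal P^{0}({\rm Sing}\,\omega)=\mathcal P^{\,5-4\alpha}({\rm Sing}\,\omega)=0$, i.e.\ ${\rm Sing}\,\omega=\emptyset$ --- a contradiction. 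Therefore, for every bounded $Y$ there is $\alpha_0(Y)<\tfrac54$ such that for all $\alpha\in[\alpha_0,\tfrac54]$ and $u_0\in Y$ the Leray--Hopf solution furnished by Theorem~\ref{t:main} has empty singular set, hence is smooth on $\RR^3\times(0,\infty)$; being a strong solution it is, by weak--strong uniqueness, the unique Leray--Hopf weak solution with datum $u_0$.

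The main obstacle is the limiting step of the second paragraph: one must verify that the weak-formulation / local-energy / extension structure of Definition~\ref{def:sws} is stable as $\alpha_k\to\tfrac54$ and that the limit $\omega$ genuinely inherits it --- including, if the definition requires it, exhibiting an admissible $L^2$ initial datum (say at $s=-\tfrac T2$) and checking the integrability and extension conditions --- with all constants uniform for $\alpha_k$ near $\tfrac54$. This is essentially the same compactness mechanism already developed for Corollary~\ref{c:stab}; the only genuinely new point is running it on the unbounded slab $\RR^3\times(-\tfrac T2,\infty)$ rather than on a single compact cylinder.
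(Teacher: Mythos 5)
Your argument follows the same overall mechanism as the paper's proof: a contradiction sequence $\alpha_k\uparrow\frac54$, $u_0^k\in Y$; exclusion of small times via (LT)+(WS); compactness of the suitable weak solutions as $\alpha_k\to\frac54$; regularity of the limit at $\alpha=\frac54$ (empty singular set, since $\mathcal P^{0}$ is the counting measure in Theorem \ref{thm-intro:sing-set-dim}); and Corollary \ref{c:stab} to transfer regularity back to $u_k$ for large $k$, contradicting the existence of singular points. The one genuine variation is how you handle the location of the singularities: the paper proves that the singular points $(x_k,t_k)$ remain in a fixed compact set $\overline B_M\times[T_0,M]$ (uniform smallness of $\int_{Q_2(x,t)}(\mathcal M|u_k|^2+|p_k|)^{3/2}$ for $|x|+|t|$ large, so Theorem \ref{thm:eps_reg-massimale} applies there) and then extracts a convergent subsequence of singular points, whereas you translate each solution so that its singular point sits at the origin. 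This is legitimate (the time-translated restriction of a suitable weak solution is again one, starting from a.e.\ time slice) and gives a slightly cleaner contradiction; the price, which you flag honestly, is that the stability of the notion of suitable weak solution under $\alpha_k\to\frac54$ must be run for the translated sequence on a varying slab. Since the paper itself invokes the analogous limit statement with equal brevity (``arguing as above''), I do not count that against you.

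The genuine gap is in the uniqueness step. You conclude that the solution, ``being a strong solution'', is the unique Leray--Hopf solution by weak--strong uniqueness, but Theorem \ref{t:Leray} requires $\|u(\cdot,t)\|_{L^\infty}\in L^2(0,T)$, and emptiness of the singular set only yields local boundedness: continuity at every point of $\RR^3\times(0,\infty)$ does not by itself control $u$ uniformly as $|x|\to\infty$, nor as $t\downarrow 0$. This bound is exactly what the paper's compact-localization argument supplies and what your translation shortcut bypasses: since $\int(\mathcal M|u|^2+|p|)^{(3+2\alpha)/3}<\infty$ (as in the proof of Corollary \ref{c:box}), the smallness hypothesis of Theorem \ref{thm:eps_reg-massimale} holds at every $(x,t)$ with $|x|+|t|$ large, giving a uniform $L^\infty$ bound outside a compact space-time region; on $(0,T_0]$ the solution coincides with the (LT) classical solution by (WS); and on the remaining compact region continuity gives boundedness. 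Combining these yields the $L^2_tL^\infty_x$ bound needed to invoke Theorem \ref{t:Leray}; without some such argument the final sentence of your proof does not follow.
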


Any space $X$ of functions which are regular enough satisfies (LT) and (WS). Indeed the amount of regularity needed is not much: for instance, with a slight modification of Leray's original arguments, it is
not difficult to see that $H^1$ fulfills both conditions. We thus conclude that for any $H^1$ initial data there are
global smooth solutions of \eqref{e:NS_alfa} whenever $\alpha$ is sufficiently close to $\frac{5}{4}$ (and this closeness is uniform on bounded subsets of $H^1$). {An analogous result regarding the existence of smooth solutions for the slightly supercritical surface quasi-geostrophic equation on bounded subsets of a suitably chosen Banach space was obtained in \cite{cotivicol} with different methods.}

\subsection{Plan of the paper} In Section \ref{s:suitable} we will discuss the various notions of weak solutions used in the note, their existence and the two main $\varepsilon$-regularity statements, namely Theorem \ref{thm:eps_reg-variant} and Theorem \ref{t:CKN}, from which all the results claimed so far will be derived. The Sections \ref{s:energy}, \ref{s:compactness} and \ref{s:linear} outline the three main ingredients of the proof of the first $\varepsilon$-regularity Theorem \ref{thm:eps_reg-variant}. In particular:
\begin{itemize}
\item Section \ref{s:energy} derives the most important consequence of our definition of suitable weak solution, namely a local energy estimate (cf. Lemma \ref{lemma:suitable}), which in turn implies a crucial compactness property of solutions, proved in Section \ref{s:compactness}, cf. Lemma \ref{lem:lions}. 
\item Section \ref{s:linear} discusses the interior H\"older regularity of the solutions of a suitable linearization of \eqref{e:NS_alfa}, cf. Lemma \ref{lem:hoelder}.
\end{itemize}
The compactness lemma and the estimates on the linearized system are then combined in Section \ref{s:excess_decay} to
prove a suitable excess decay property of solutions of \eqref{e:NS_alfa}, cf. Proposition \ref{prop:exc_decay}. The latter proposition is iterated to prove Theorem \ref{thm:eps_reg-variant}. In Section \ref{s:CKN} we show then how to derive the second $\varepsilon$-regularity Theorem \ref{t:CKN} from Theorem \ref{thm:eps_reg-variant}. Finally in Section \ref{s:final} we prove the various results claimed above. 

\subsection{Acknowledgments and final remarks} The first author acknowledges the support of Dr. Max R\"ossler, of the Walter Haefner Foundation and of the ETH Z\"urich Foundation. Part of this work has been carried on while the second author was spending a semester at the CMSA at Harvard and a one month visit at the IHES at Bures sur Yvette. This project has received funding from the European Union's Horizon 2020 research and innovation programme under the grant agreement No.752018 (CuMiN). 

The authors wish to thank Joachim Krieger, Wilhelm Schlag and Andrea Nahmod for very useful advices and conversations at a very early stage of their work and Alice Chang and Vlad Vicol for useful comments towards the end. 

While we were completing this manuscript we learned of a similar independent work in preparation
of Eric Chen, cf. \cite{chen-forth}, aimed at establishing the main conclusion of our paper, namely
the estimate on the singular set of Theorem \ref{thm-intro:sing-set-dim}.

\section{Suitable weak solutions}\label{s:suitable}

\subsection{Leray--Hopf weak solutions} We introduce the usual concept of Leray--Hopf weak solutions. 

\begin{definition}\label{d:LH} Let $u_0\in L^2 (\RR^3)$ be a divergence-free vector field. A pair $(u,p)$ is a Leray--Hopf weak solution of \eqref{e:NS_alfa}-\eqref{e:Cauchy}  on $\mathbb R^3\times (0,T)$ if:
\begin{itemize}
\item[(a)] $u\in L^\infty ((0,T), L^2 (\RR^3)) \cap L^2 ((0,T), H^\alpha (\RR^3))$;
\item[(b)] $u$ solves \eqref{e:NS_alfa}-\eqref{e:Cauchy} in the sense of distributions, namely ${\rm div}\, u = 0$ and 
\[
\int \Big(\partial_t \varphi \cdot u + \sum_{i,j} u^iu^j \partial_j \varphi^i - (-\Delta)^\alpha \varphi \cdot u\Big)\, dx\, dt 
= - \int u_0 (x) \cdot \varphi (0,x)\, dx
\]
for every divergence-free $\varphi\in C^\infty_c (\mathbb R^3\times \mathbb R, \mathbb R^3)$.
\item[(c)] $p$ is the potential-theoretic solution of $-\Delta p = {\rm div}\, {\rm div}\, (u\otimes u)$ (here and in the rest of the paper we use the notation ${\rm div}\, {\rm div}\, (u\otimes u)$ for $\sum_{i,j} \partial^2_{ij} (u_i u_j)$);
\item[(d)] The following inequalities hold:
\begin{align}
&\frac{1}{2} \int |u|^2 (x,t)\, dx + \int_0^t \int |(-\Delta)^{\sfrac{\alpha}{2}} u|^2 (x,\tau)\, dx\, d\tau \leq \frac{1}{2} \int |u_0|^2 (x)\, dx
\qquad \forall t>0\label{e:g_energy_1}\\
&\frac{1}{2} \int |u|^2 (x,t)\, dx + \int_s^t \int |(-\Delta)^{\sfrac{\alpha}{2}} u|^2 (x,\tau)\, dx\, d\tau \leq \frac{1}{2} \int |u|^2 (x,s)\, dx
\quad \mbox{for a.e. $s$ and every $t>s$.} \label{e:g_energy_2} 
\end{align}
\end{itemize}
\end{definition}

As already mentioned, we have the following simple extension of Leray's theory:

\begin{theorem}\label{t:Leray}
For any divergence-free $u_0\in L^2 (\RR^3)$ there is a Leray--Hopf weak solution of \eqref{e:NS_alfa}-\eqref{e:Cauchy} on $\RR^3 \times (0, \infty)$. Moreover, if $v$ is a second
Leray--Hopf weak solution such that $\|v (\cdot, t)\|_\infty \in L^2 (0,T)$ then $u=v$ on $\mathbb R^3\times (0,T)$. 
\end{theorem}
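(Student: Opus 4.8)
The plan is to follow Leray's classical scheme essentially verbatim, since the hyperdissipative term $-(-\Delta)^\alpha u$ with $\alpha>1$ only improves the available regularity and compactness relative to the case $\alpha=1$.

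\textbf{Existence.} I would set up a Galerkin approximation: let $\{e_k\}$ be an orthonormal basis of divergence-free $L^2(\RR^3)$ fields made of eigenfunctions of the Stokes-type operator $P(-\Delta)^\alpha$ (with $P$ the Leray projection), let $P_N$ be the orthogonal projection onto $\mathrm{span}\{e_1,\dots,e_N\}$, and solve the finite ODE system for $u_N(t)=\sum_{k\le N}c_k^N(t)e_k$ obtained by projecting \eqref{e:NS_alfa} with initial datum $u_N(0)=P_N u_0$. Testing the projected equation against $u_N$ itself makes the quadratic term vanish (it is still in the span, and $\int (u_N\cdot\nabla)u_N\cdot u_N=0$ by ${\rm div}\,u_N=0$), producing the exact identity
\[
\tfrac12\|u_N(t)\|_{L^2}^2+\int_s^t\|(-\Delta)^{\alpha/2}u_N\|_{L^2}^2\,d\tau=\tfrac12\|u_N(s)\|_{L^2}^2\,,
\]
hence $\|u_N\|_{L^\infty_t L^2}\le\|u_0\|_{L^2}$ and $\|u_N\|_{L^2_t H^\alpha}\le C\|u_0\|_{L^2}$, so the ODE solutions are global.

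\textbf{Compactness and passage to the limit.} From the uniform $L^\infty_t L^2\cap L^2_t\dot H^\alpha$ bound and Gagliardo--Nirenberg interpolation, $u_N$ is bounded in $L^q_{t,x}$ for some $q>2$ depending on $\alpha$, so $u_N\otimes u_N$ is bounded in some $L^r_{t,x}$ with $r>1$; combined with the equation this yields a uniform bound for $\partial_t u_N$ in $L^r_t H^{-s}_{\mathrm{loc}}$ for a suitable $s$. The Aubin--Lions lemma (applied on balls $B_R$ and diagonalized in $R$) gives a subsequence converging to some $u$ strongly in $L^2_{\mathrm{loc}}(\RR^3\times(0,T))$ and weakly-$*$ in $L^\infty_t L^2\cap L^2_t H^\alpha$, and the strong convergence passes $u_N\otimes u_N\to u\otimes u$ in $L^1_{\mathrm{loc}}$. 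Thus $u$ solves \eqref{e:NS_alfa}-\eqref{e:Cauchy} in the sense of distributions ((a), (b)), and I would \emph{define} the pressure by $p=\sum_{i,j}R_iR_j(u_iu_j)$ via Riesz transforms, well-posed because $u\otimes u\in L^r_{t,x}$, which is exactly (c). Finally, weak lower semicontinuity of the norms together with $u_N\in C_w([0,T];L^2)$ and $\|u_N(s)\|_{L^2}\to\|u(s)\|_{L^2}$ for a.e.\ $s$ (from strong $L^2_{t,x}$ convergence) and $\|u_N(0)\|_{L^2}\le\|u_0\|_{L^2}$ upgrade the Galerkin identity to the inequalities \eqref{e:g_energy_1}--\eqref{e:g_energy_2}. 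Running this on each $(0,T)$ and diagonalizing gives the solution on $(0,\infty)$.

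\textbf{Weak--strong uniqueness.} Given Leray--Hopf solutions $u$ and $v$ with $\|v(\cdot,t)\|_\infty\in L^2(0,T)$, set $w=u-v$. A standard manipulation of the two weak formulations, using the energy inequality for $u$, the energy \emph{equality} for $v$ (valid because $v$ is bounded), and $(u\cdot\nabla)u-(v\cdot\nabla)v=(w\cdot\nabla)w+(v\cdot\nabla)w+(w\cdot\nabla)v$, yields
\[
\tfrac12\|w(t)\|_{L^2}^2+\int_0^t\|(-\Delta)^{\alpha/2}w\|_{L^2}^2\,d\tau\le-\int_0^t\!\!\int(w\cdot\nabla)w\cdot v\,.
\]
Bounding the right-hand side by $\|v\|_\infty\|w\|_{L^2}\|(-\Delta)^{1/2}w\|_{L^2}$, interpolating $\|(-\Delta)^{1/2}w\|_{L^2}\le\|w\|_{L^2}^{1-1/\alpha}\|(-\Delta)^{\alpha/2}w\|_{L^2}^{1/\alpha}$ and applying Young's inequality absorbs the dissipative term and leaves $\frac{d}{dt}\|w\|_{L^2}^2\le C\|v\|_\infty^{2\alpha/(2\alpha-1)}\|w\|_{L^2}^2$; since $2\alpha/(2\alpha-1)\le2$ for $\alpha\ge1$, the coefficient lies in $L^1(0,T)$, and Grönwall with $w(0)=0$ forces $w\equiv0$.

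\textbf{Main obstacle.} The genuinely delicate points are purely technical: the rigorous passage to the limit in the energy identity, in particular recovering the ``for a.e.\ $s$'' form of \eqref{e:g_energy_2} (which needs the a.e.\ convergence $\|u_N(s)\|_{L^2}\to\|u(s)\|_{L^2}$ and care at $s=0$), and justifying the formal computation for $w$ in the uniqueness argument by mollifying in time and passing to the limit — exactly as in the classical $\alpha=1$ theory. Nothing here exploits $\alpha>1$ beyond making all the interpolation exponents comfortably subcritical.
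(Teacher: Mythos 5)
Your overall route is correct and is essentially the classical Leray scheme that the paper also follows: a priori energy bounds in $L^\infty_t L^2\cap L^2_t H^\alpha$, interpolation, a compactness argument yielding strong local convergence, passage to the limit in the equation and in the energy inequality by lower semicontinuity, pressure recovered by Riesz transforms, and a Serrin-type weak--strong uniqueness via Gr\"onwall. The difference is the approximation scheme: you use Galerkin, while the paper mollifies the advecting velocity (replacing $(u\cdot\nabla)u$ by $(u\ast\phi_\eps\cdot\nabla)u$ and mollifying $u_0$), obtains global \emph{smooth} solutions of the regularized system, and then passes to the limit; its compactness step is not Aubin--Lions in a fixed negative Sobolev space but a mollify-in-space plus Ascoli--Arzel\`a argument. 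One reason the paper prefers this approximation is that the same regularized solutions are reused later to produce \emph{suitable} weak solutions, which a bare Galerkin scheme does not obviously give. Your uniqueness argument coincides with the paper's; your interpolation $\|(-\Delta)^{1/2}w\|_{L^2}\le\|w\|_{L^2}^{1-1/\alpha}\|(-\Delta)^{\alpha/2}w\|_{L^2}^{1/\alpha}$ is in fact the careful way to handle the low frequencies, and the integrability of $\|v\|_\infty^{2\alpha/(2\alpha-1)}$ on the finite interval is correctly checked.

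Two technical points in the existence part need repair. First, on $\RR^3$ the operator $P(-\Delta)^\alpha$ has purely continuous spectrum, so there is no orthonormal basis of $L^2$ eigenfunctions; you must take a general countable orthonormal basis of divergence-free fields (e.g.\ from smooth compactly supported ones). The energy identity survives, but $P_N$ then no longer commutes with the (fractional) Laplacian nor is it uniformly bounded on negative Sobolev spaces, so the claimed uniform bound on $\partial_t u_N$ in $L^r_t H^{-s}_{\rm loc}$ and the direct appeal to Aubin--Lions need the standard adjustment (equicontinuity of the coefficients $t\mapsto(u_N(t),e_k)$ plus the compact embedding $H^\alpha(B_R)\hookrightarrow L^2(B_R)$ and a diagonal argument, or the paper's mollification trick). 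Second, for the ``a.e.\ $s$'' energy inequality \eqref{e:g_energy_2} you need $\|u_N(s)\|_{L^2(\RR^3)}\to\|u(s)\|_{L^2(\RR^3)}$, i.e.\ convergence of the \emph{global} $L^2$ norms at a.e.\ time; strong convergence in $L^2_{\rm loc}(\RR^3\times(0,T))$ alone only gives the local norms, and the bad inequality $\|u(s)\|\le\liminf\|u_N(s)\|$ goes the wrong way on that side. The paper addresses this by proving strong convergence in $L^2(\RR^3\times[0,T])$ globally (using the uniform bound $\|w_k-w_k\ast\varphi_\delta\|_{L^2}\le C\delta^\alpha\|(-\Delta)^{\alpha/2}w_k\|_{L^2}$), so your proof should either do the same or supply a tightness argument preventing escape of energy to spatial infinity. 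With these fixes your proposal is a valid alternative proof.
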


The proof follows the idea of Leray's paper \cite{Leray} with minor modifications: we refer the reader to the Appendix for a detailed proof. 

\subsection{Caffarelli-Silvestre and Yang extension problems} In order to define suitable weak solutions, we deal with the fractional Laplacian by defining suitable extensions to the half space $\mathbb R^{n+1}_+ := \mathbb R^n \times [0, \infty)$ of functions defined on $\mathbb R^n$. In what follows we always use the variable $x$ for the factor $\mathbb R^n$ and the variable $y$ for the factor $[0, \infty)$. Moreover we use the notation $\overline \nabla$, $\overline \Delta$ and so on for differential operators defined on $\mathbb R^{n+1}_+$.
The following theorem is essentially due to Yang \cite{Yang} (see also the survey \cite{chang-yang}): for our analysis we need however some adjustments of his statements and some further properties and for this reason we give a self-contained proof of the theorem and of other related facts in Appendix~\ref{a:ext}.

\begin{theorem}[Yang, 2013]\label{thm:yang}
	Let $u\in H^\alpha(\RR^n)$, with $\alpha\in (1,2)$ and set $b:=3-2\alpha$. Define the differential operator $\overline \Delta_b$ as
	\begin{equation}\label{calc:lapl_b}
	\overline \Delta_b u^*:=\overline \Delta u^*+\frac{b}{y}\partial_y u^* = \frac{1}{y^b} \overline{{\rm div}}\, \big(y^b \overline\nabla u^*\big)\,.
	\end{equation}
Then there is a unique ``extension'' $u^*$ of $u$ in the weighted space $L^2_{\rm loc}(\RR^{n+1}_+,y^b)$ which satisfies $\overline\Delta_b u^*\in L^2(\RR^{n+1}_+,y^b)$ and %$W^{2,2}(\RR^{n+1}_+,y^b)$ which satisfies 
	\begin{equation}\label{biharm}
	\overline{\Delta}^2_b u^*(x,y)=0
	\end{equation}
	and the boundary conditions
	\begin{align}
	u^*(x,0) &=u(x)\label{e:Dirichlet}\\
	\lim_{y\to 0}y^{1-\alpha} \partial_y u^*(x,y) &=0\, .\label{e:Neumann}
	\end{align}
Moreover, there exists a constant $c_{n, \alpha}$, depending only on $n$ and $\alpha$, with the following properties:
\begin{itemize}
\item[(a)] The fractional Laplacian $(-\Delta)^\alpha u$ is given by the formula 
\begin{equation}
	\label{eqn:frac-lap-est}
		(-\Delta)^\alpha u(x)=c_{n,\alpha}\lim_{y\to 0}y^b\partial_y\overline{\Delta}_bu^*(x,y)\, .
	\end{equation}
\item[(b)] The following energy identity holds
\begin{equation}\label{e:en_of_ext}
\int_{\RR^n}|(-\Delta)^{\frac{\alpha}{2}} u|^2\,dx=\int_{\RR^n}|\xi|^{2\alpha}|\widehat{u}(\xi)|^2\,d\xi=c_{n,\alpha}\int_{\RR^{n+1}_+}y^b|\overline\Delta_b u^*|^2\,dx\,dy\, .
\end{equation}
\item[(c)] The following inequality holds for every extension $v\in L^2_{\rm loc} (\RR^{n+1}_+, y^b)$ of $u$ with $\overline\Delta_b v\in L^2(\RR^{n+1}_+,y^b)$
:
\begin{equation}\label{e:minimum_yang_ext}
\int_{\RR^{n+1}_+}y^b|\overline\Delta_b u^*|^2\,dx\,dy \leq \int_{\RR^{n+1}_+}y^b|\overline\Delta_b v|^2\,dx\,dy .
\end{equation}
\end{itemize}
\end{theorem}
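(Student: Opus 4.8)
\textbf{Proof proposal for Theorem \ref{thm:yang}.}

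The plan is to follow the classical Caffarelli--Silvestre strategy adapted to the fourth-order ($b$-biharmonic) extension that Yang introduced. First I would reduce everything to the Fourier side in the tangential variable $x$: writing $\widehat{u^*}(\xi, y)$ for the partial Fourier transform, the equation $\overline\Delta_b^2 u^* = 0$ becomes, for each fixed $\xi$, a fourth-order ODE in $y$ of the form $(\partial_y^2 + \tfrac{b}{y}\partial_y - |\xi|^2)^2 \widehat{u^*} = 0$. I would identify the solution space of this ODE: by the substitution $y \mapsto |\xi| y$ it reduces to a $\xi$-independent ODE whose solution space is four-dimensional and spanned by modified Bessel-type functions $y^{(1-b)/2} K_\nu(|\xi| y)$, $y^{(1-b)/2} I_\nu(|\xi| y)$ and their "generalized" partners coming from the double root, with $\nu = |1-b|/2 = |\alpha-1|$ (using $b = 3-2\alpha$, so $\nu = \alpha-1 \in (0,1)$). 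The two growing solutions are ruled out by the requirement $\overline\Delta_b u^* \in L^2(\RR^{n+1}_+, y^b)$, which (again by Plancherel in $x$ and the explicit asymptotics of the Bessel functions) forces the coefficients of the exponentially growing modes to vanish; this leaves a two-dimensional space of admissible decaying solutions. The two boundary conditions \eqref{e:Dirichlet} and \eqref{e:Neumann} then pin down the two remaining coefficients uniquely as explicit functions of $\widehat u(\xi)$ and $|\xi|$, giving existence and uniqueness of $u^*$ simultaneously.

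With the explicit multiplier in hand, parts (a) and (b) become computations with Bessel function identities. For (a) I would compute $y^b \partial_y \overline\Delta_b \widehat{u^*}(\xi,y)$ from the explicit formula, take $y \to 0$, and read off that the limit equals $c_{n,\alpha} |\xi|^{2\alpha} \widehat u(\xi)$ after matching the small-argument expansions of $K_\nu$; the constant $c_{n,\alpha}$ is whatever normalization pops out, and one checks it is finite and nonzero. For (b) I would integrate by parts in $y$: using the self-adjoint form $\overline\Delta_b = y^{-b}\overline{\rm div}(y^b\overline\nabla\cdot)$ and the equation $\overline\Delta_b(\overline\Delta_b u^*) = 0$, one gets $\int y^b |\overline\Delta_b u^*|^2 = -\int y^b \overline\Delta_b u^* \cdot \overline\Delta_b u^*$ reorganized via Green's identity into a boundary term at $y=0$ that, by \eqref{e:Neumann} and \eqref{eqn:frac-lap-est}, reproduces $c_{n,\alpha}^{-1}\int |\xi|^{2\alpha}|\widehat u|^2\,dx$; alternatively, and more cleanly, just plug the explicit multiplier into $\int_0^\infty y^b |(\partial_y^2 + \tfrac b y\partial_y - |\xi|^2)\widehat{u^*}|^2\,dy$ and evaluate the resulting Bessel integral, which is a Weber--Schafheitlin type integral with a closed form proportional to $|\xi|^{2\alpha}|\widehat u(\xi)|^2$. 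Both routes require care with the convergence of the $y$-integral near $0$, since $y^b$ with $b = 3-2\alpha$ can be close to $y$ or to $y^{1/2}$ depending on $\alpha$, but the Bessel asymptotics keep everything integrable.

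Part (c), the minimality among all $b$-biharmonic-energy-finite extensions of $u$, is the variational heart and I expect it to be the part needing the most care. The natural approach is: given any competitor $v$ with $\overline\Delta_b v \in L^2(\RR^{n+1}_+,y^b)$ and $v|_{y=0}=u$, set $w = v - u^*$ and expand $\int y^b|\overline\Delta_b v|^2 = \int y^b|\overline\Delta_b u^*|^2 + 2\int y^b \overline\Delta_b u^*\,\overline\Delta_b w + \int y^b|\overline\Delta_b w|^2$; it suffices to show the cross term vanishes. Formally this is an integration by parts moving both $\overline\Delta_b$'s off $w$: $\int y^b \overline\Delta_b u^*\,\overline\Delta_b w = \int y^b (\overline\Delta_b^2 u^*) w + (\text{boundary terms at } y=0) = 0 + (\text{boundary terms})$, and one must check the boundary terms die because $w$ vanishes on $\{y=0\}$ and because the conormal term $y^b\partial_y\overline\Delta_b u^*$ is controlled while $w(x,0)=0$ (the other boundary piece involves $\partial_y w$ paired against $y^b \overline\Delta_b u^*$, and one uses the Neumann-type decay). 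The obstacle is that $v$ is only assumed to lie in $L^2_{\rm loc}(y^b)$ with finite $b$-biharmonic energy, so $v$ need not decay at spatial infinity and the integration by parts is not obviously justified; the clean fix is again to pass to the Fourier transform in $x$, where for each $\xi$ the competitor $\widehat v(\cdot,\xi)$ solves a one-dimensional minimization of $\int_0^\infty y^b|(\partial_y^2+\tfrac b y\partial_y - |\xi|^2)g|^2\,dy$ over $g$ with $g(0)=\widehat u(\xi)$ and finite energy, and the Euler--Lagrange equation of that scalar problem is exactly the admissible-decaying-solution ODE with the correct boundary condition — so $\widehat{u^*}(\cdot,\xi)$ is the pointwise-in-$\xi$ minimizer, and integrating in $\xi$ via Plancherel gives \eqref{e:minimum_yang_ext}. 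One still has to argue that the natural boundary condition forced by finiteness of the scalar energy (selecting the right two-dimensional solution space and the right trace condition) coincides with \eqref{e:Neumann}; this is where the precise value $b=3-2\alpha$ and the resulting Bessel index $\nu=\alpha-1$ enter, ensuring the "wrong" boundary behavior has infinite energy and is thus automatically excluded. I would organize the final writeup so that the Fourier-side scalar analysis is done once and then harvested for existence, uniqueness, (a), (b) and (c) in turn.
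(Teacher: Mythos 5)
Your route---partial Fourier transform in $x$, explicit fourth--order Bessel ODE in $y$, multiplier computations---is genuinely different from the paper's, which never solves the ODE: there the extension is defined directly as $u^*=P(\cdot,y)\ast u$ with the explicit kernel $P(x,y)=y^{2\alpha}(|x|^2+y^2)^{-\frac{n+2\alpha}{2}}$, the equation \eqref{biharm} and the boundary conditions are checked on $P$ by moment conditions and Taylor estimates, the energy identity (b) follows from the soft scaling relation $\widehat P(\xi,y)=\phi(|\xi|y)$ (no Bessel asymptotics), (a) is a distributional limit argument combined with the polarized form of (b), and (c) together with uniqueness is deduced from strict convexity of the energy and the Euler--Lagrange conditions. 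Your plan is essentially Yang's original one and is viable, but as written it has two concrete gaps.

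First, the dimension count in your existence/uniqueness step fails. Writing $L=\partial_y^2+\frac{b}{y}\partial_y-|\xi|^2$, the kernel of $L$ is spanned by $y^{\alpha-1}K_{\alpha-1}(|\xi|y)$ and $y^{\alpha-1}I_{\alpha-1}(|\xi|y)$; the exponentially growing mode $y^{\alpha-1}I_{\alpha-1}(|\xi|y)$ satisfies $Lg=0$ and therefore contributes \emph{nothing} to $\int_0^\infty y^b|Lg|^2\,dy$. Hence the hypothesis $\overline\Delta_b u^*\in L^2(\RR^{n+1}_+,y^b)$ rules out only one of the two growing solutions of $L^2g=0$ (the generalized one whose $L$-image is the growing kernel element), not both. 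After imposing finite energy you are left with a three--dimensional family for each $\xi$, while \eqref{e:Dirichlet} fixes the coefficient of $y^0$ and \eqref{e:Neumann} only kills the coefficient of $y^{2\alpha-2}$ in the expansion at $y=0$ (the exponents of $L^2$ at $y=0$ are $0,\,2\alpha-2,\,2,\,2\alpha$, and $y^{1-\alpha}\partial_y$ annihilates the limits of all but the second). So two boundary conditions cannot pin down three coefficients, and your argument as stated does not close: you need an extra ingredient to exclude the exponentially growing $b$-harmonic correction (a growth or global integrability condition at $y=\infty$, or the variational characterization itself). This is exactly the delicate point of the theorem, and the blanket claim that finite $\overline\Delta_b$-energy removes both growing modes is false.

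Second, in part (c) your ``clean fix'' of passing to the Fourier transform of the competitor does not exist as stated: $v$ is only assumed to lie in $L^2_{\rm loc}(\RR^{n+1}_+,y^b)$ with $\overline\Delta_b v\in L^2(\RR^{n+1}_+,y^b)$, so $v$ need not be a tempered distribution in $x$ and $\widehat v(\xi,y)$ need not be defined; the lack of spatial decay you correctly identified is not cured by Plancherel, it reappears as the non-existence of the transform. Moreover the competitors in \eqref{e:minimum_yang_ext} are not assumed to satisfy \eqref{e:Neumann}, so your assertion that the natural boundary condition of the fibered scalar minimization coincides with \eqref{e:Neumann}, as well as the sense in which the trace $g(0)=\widehat u(\xi)$ exists for a finite-energy competitor, both require proof. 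To make (c) rigorous you would need either a cut-off/density scheme justifying the integration by parts in physical space, or an argument reducing to competitors for which the Fourier picture is legitimate; the paper instead invokes convexity of the energy together with the Euler--Lagrange conditions satisfied by the explicit kernel extension.
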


\begin{remark}\label{r:lisce}
The boundary conditions \eqref{e:Dirichlet} and \eqref{e:Neumann} can be really taken pointwise if $u$ is (sufficiently) smooth, otherwise they must be understood in the sense of distributions. In particular, we will prove a suitable representation formula for $u^*(x,y)$ as $(P (\cdot, y) \ast u) (x)$, where $P$ is the Poisson-type kernel of \eqref{e:Poisson}: corresponding boundary conditions will be derived for $P$ in the distributional sense. When $u$ is less regular, even though Theorem \ref{thm:yang} does not apply, we will still use the notation $u^*$ for its extension $P (\cdot, y)\ast u$.
\end{remark}

\begin{remark}\label{r:a_b}
Observe that, since $2-\alpha>0$, \eqref{e:Neumann} implies also the boundary condition appearing in \cite{Yang}
\begin{equation}\label{e:Neumann_2}
\lim_{y\to 0} y^b {\partial_yu^*} (x,y) = 0\, .
\end{equation}
\end{remark}

Yang's theorem is an extension to higher order operators of a theorem of Caffarelli and Silvestre, cf. \cite{CS}. The latter will also turn out useful in our considerations and we therefore recall it here.

\begin{theorem}[Caffarelli--Silvestre]\label{thm:CF}
	Let $w\in H^{\alpha-1} (\RR^n)$, with $\alpha\in (1,2)$ and set $b:=3-2\alpha$. Then there is a unique ``extension'' $w^\flat$ of $w$ in the weighted space $H^1(\RR^{n+1}_+,y^b)$ which satisfies 
	\begin{equation}\label{harm}
	\overline{\Delta}_b w^\flat(x,y)=0
	\end{equation}
	and the boundary condition
	\begin{equation}\label{eqn:b-c-caffarelli}
	w^\flat(x,0)=w(x)\, .
	\end{equation}
Moreover, there exists a constant $C_{n, \alpha}$, depending only on $n$ and $\alpha$, with the following properties:
\begin{itemize}
\item[(a)] The fractional Laplacian $(-\Delta)^{\alpha-1} w$ is given by the formula 
\begin{equation}
	\label{eqn:frac-lap-est-CS}
		(-\Delta)^{\alpha-1} w (x)=C_{n,\alpha}\lim_{y\to 0}y^b\partial_y w^\flat (x,y)\,.
	\end{equation}
\item[(b)] The following energy identity holds
\begin{equation}\label{e:en_of_ext-CS}
\int_{\RR^n}|(-\Delta)^{\frac{\alpha-1}{2}} w|^2\,dx=\int_{\RR^n}|\xi|^{2\alpha-2}|\widehat{w}(\xi)|^2\,d\xi=C_{n,\alpha}\int_{\RR^{n+1}_+}y^b|\overline\nabla w^\flat|^2\,dx\,dy\,.
\end{equation}
\item[(c)] The following inequality holds for every extension $v\in H^1 (\RR^{n+1}_+, y^b)$ of $w$:
\begin{equation}\label{e:minimum_ext_caff}
\int_{\RR^{n+1}_+}y^b|\overline \nabla w^\flat|^2\,dx\,dy \leq \int_{\RR^{n+1}_+}y^b|\overline \nabla v|^2\,dx\,dy\,.
\end{equation}\
\end{itemize}
\end{theorem}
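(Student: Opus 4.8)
\emph{Strategy.} Since $\alpha\in(1,2)$, the order $s:=\alpha-1$ lies in $(0,1)$ and $b=3-2\alpha=1-2s\in(-1,1)$, so $y^b$ is a Muckenhoupt $A_2$ weight on $\RR^{n+1}_+$ and the statement is exactly the extension theorem of Caffarelli and Silvestre \cite{CS} for $(-\Delta)^{s}$. I would construct $w^\flat$ explicitly through the Fourier transform in $x$, read off the representation formula \eqref{eqn:frac-lap-est-CS} and the energy identity \eqref{e:en_of_ext-CS} from the radial profile appearing in this construction, and finally derive the minimality \eqref{e:minimum_ext_caff} and the uniqueness by a variational/integration-by-parts argument.

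\emph{Construction.} After a Fourier transform in $x$, the equation $\overline\Delta_b w^\flat=0$ becomes, for each fixed $\xi$, the degenerate ODE $\partial_y^2\hat w^\flat+\tfrac{b}{y}\partial_y\hat w^\flat-|\xi|^2\hat w^\flat=0$ with $\hat w^\flat(\xi,0)=\hat w(\xi)$; among its solutions the one that stays bounded (equivalently, has finite weighted energy) as $y\to\infty$ is $\hat w^\flat(\xi,y)=\hat w(\xi)\,\phi(|\xi|y)$, where $\phi(t)=c_s\,t^{s}K_s(t)$, $K_s$ is the modified Bessel function of the second kind and $c_s$ is chosen so that $\phi(0)=1$. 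Inverting the Fourier transform then produces a Poisson-type kernel $P(x,y)=C_{n,s}\,y^{2s}\big(|x|^2+y^2\big)^{-(n+2s)/2}$ with $\widehat{P(\cdot,y)}(\xi)=\phi(|\xi|y)$, and one sets $w^\flat(x,y)=\big(P(\cdot,y)\ast w\big)(x)$. Since $\phi$ and $\phi'$ decay exponentially and $\phi(0)=1$, Plancherel gives $w^\flat\in H^1(\RR^{n+1}_+,y^b)$ whenever $w\in H^{\alpha-1}(\RR^n)$, and $P(\cdot,y)$ is an approximate identity, so $w^\flat(\cdot,y)\to w$ as $y\to0$ and \eqref{eqn:b-c-caffarelli} holds (in the trace sense if $w$ is not continuous).

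\emph{Properties (a) and (b).} The key input is the expansion $\phi(t)=1+\gamma_s\,t^{2s}+o(t^{2s})$ near $t=0$, with $\gamma_s\neq0$, hence $\phi'(t)=2s\gamma_s\,t^{2s-1}+o(t^{2s-1})=2s\gamma_s\,t^{-b}+o(t^{-b})$. Thus $y^b\partial_y\hat w^\flat(\xi,y)=|\xi|\,y^b\,\phi'(|\xi|y)\,\hat w(\xi)\to 2s\gamma_s\,|\xi|^{1-b}\hat w(\xi)=2s\gamma_s\,|\xi|^{2\alpha-2}\hat w(\xi)$ as $y\to0$, and since $|\xi|^{2\alpha-2}$ is the symbol of $(-\Delta)^{\alpha-1}$ this is, distributionally, \eqref{eqn:frac-lap-est-CS} with $C_{n,\alpha}=(2s\gamma_s)^{-1}$. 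For \eqref{e:en_of_ext-CS} I would use Plancherel in $x$ and the substitution $t=|\xi|y$ to factor the $y$-integral out: $\int_{\RR^{n+1}_+}y^b|\overline\nabla w^\flat|^2\,dx\,dy=\Big(\int_0^\infty t^b\big(\phi(t)^2+\phi'(t)^2\big)\,dt\Big)\int_{\RR^n}|\xi|^{2\alpha-2}|\hat w(\xi)|^2\,d\xi$, where the $t$-integral is finite because $b>-1$ handles $t=0$ and $\phi,\phi'$ decay exponentially at $\infty$; an integration by parts in $y$ using the ODE (turning $\int y^b|\partial_y\hat w^\flat|^2$ into the boundary term from (a) plus $\int y^b|\xi|^2|\hat w^\flat|^2$) shows that this constant coincides, up to sign, with $C_{n,\alpha}$ of part (a), giving \eqref{e:en_of_ext-CS}.

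\emph{Property (c) and uniqueness.} For an arbitrary extension $v\in H^1(\RR^{n+1}_+,y^b)$ of $w$, write $v=w^\flat+\eta$ with $\eta$ of zero trace and expand $\int y^b|\overline\nabla v|^2=\int y^b|\overline\nabla w^\flat|^2+2\int y^b\overline\nabla w^\flat\!\cdot\overline\nabla\eta+\int y^b|\overline\nabla\eta|^2$. The cross term will vanish: integrating by parts on $\RR^n\times(\eps,\infty)$ gives $\int y^b\overline\nabla w^\flat\!\cdot\overline\nabla\eta=-\int\overline{{\rm div}}(y^b\overline\nabla w^\flat)\,\eta-\int_{\RR^n}\eps^b\,\partial_y w^\flat(x,\eps)\,\eta(x,\eps)\,dx$, and as $\eps\to0$ the bulk term vanishes since $\overline\Delta_b w^\flat=0$ while the boundary term vanishes since $\eta$ has zero trace and $\eps^b\partial_y w^\flat(\cdot,\eps)$ stays bounded in the relevant dual norm by part (a); hence $\int y^b|\overline\nabla v|^2\ge\int y^b|\overline\nabla w^\flat|^2$, which is \eqref{e:minimum_ext_caff}. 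Applying this to two solutions with the same trace forces $\overline\nabla(v-w^\flat)=0$, so $v=w^\flat$, which is uniqueness. I expect the main obstacle to be purely technical and caused by the low regularity of $w$: the boundary conditions, the limit in \eqref{eqn:frac-lap-est-CS}, and the boundary terms above all have to be made sense of in the trace/distributional framework, and since the weight $y^b$ is singular at $y=0$ when $\alpha>\tfrac32$ one must lean on $b\in(-1,1)$ (the $A_2$ property) and the corresponding weighted trace theory; pinning down the Bessel asymptotics of $\phi$ precisely and matching the constants of (a) and (b) also takes some care, even if all of it is by now standard.
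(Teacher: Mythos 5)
The paper does not actually prove Theorem \ref{thm:CF}: it is quoted as a known result of Caffarelli and Silvestre \cite{CS}, and the only extension theorem proved in the paper is its higher-order analogue, Yang's Theorem \ref{thm:yang}, in Appendix~\ref{a:ext}. Your sketch is therefore not competing with an internal proof; it is, correctly, the classical Caffarelli--Silvestre argument: Fourier transform in $x$, the Bessel profile $\phi(t)=c_s t^sK_s(t)$ with $s=\alpha-1$, the expansion $\phi(t)=1+\gamma_s t^{2s}+o(t^{2s})$ giving \eqref{eqn:frac-lap-est-CS}, the scaling substitution $t=|\xi|y$ giving \eqref{e:en_of_ext-CS}, and the zero-trace decomposition for minimality and uniqueness. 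This is sound. It is instructive to contrast it with how the paper handles the analogous Theorem \ref{thm:yang}: there the authors work almost entirely in physical space with the explicit Poisson-type kernel $P(x,y)=y^{2\alpha}(|x|^2+y^2)^{-(n+2\alpha)/2}$, verify $\overline\Delta_b^2P=0$ directly, obtain the boundary conditions and the Neumann-type formula from moment conditions and Taylor expansions of test functions (with careful uniform bounds in $\mathscr S'$), use Fourier only for the energy identity, and get uniqueness from strict convexity of the Dirichlet-type functional; your route gets (a) and (b) more quickly from the Bessel asymptotics but pushes the technicalities into the trace/distributional limits you flag at the end. Three small points to tighten: finiteness of $\int_0^\infty t^b(\phi^2+\phi'^2)\,dt$ near $t=0$ uses $b<1$ (for the $t^b\phi'(t)^2\sim t^{-b}$ term), not only $b>-1$, which of course holds since $b\in(-1,1)$; with your normalization the constants in (a) and (b) agree only up to sign, as you note, so one should fix a sign convention if the same $C_{n,\alpha}$ is to appear in both; and the extension generally fails to lie in $L^2(\RR^{n+1}_+,y^b)$ globally (low frequencies), so the uniqueness class is best phrased as in the paper's Yang statement, i.e.\ $L^2_{\rm loc}$ with finite weighted Dirichlet energy, rather than a literal $H^1(\RR^{n+1}_+,y^b)$.
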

In the rest of the note we will always use $b$ to denote the exponent $3-2\alpha$ in the operator $\overline{\Delta}_b$, in the measure $y^b\, dx\, dy$ and in the corresponding weighted Sobolev spaces. The same exponent appears, however, in some other instances, more precisely in the scaling of certain integral quantities and in the summability of $u$ and $p$: in such cases we will use, instead, $3-2\alpha$.

\subsection{Suitable weak solutions} We are now ready to define suitable weak solutions. In the next formula and in the rest of the paper we use Einstein's convention for the sum on repeated indices.
\begin{definition}\label{def:sws}{\rm
A Leray--Hopf weak solution $(u,p)$ on $\RR^3 \times [0,T]$ is a suitable weak solution if the following inequality 
holds for a.e. $t\in [0,T]$ and all nonnegative test functions\footnote{That is, the function $\varphi$ vanishes when 
$|x| + y + |t|$ is large enough and if $t$ is sufficiently close to $0$, but it can be nonzero on some regions of $\{(x,y,t): y=0\}$.} $\varphi \in C^{\infty}_c ( \RR^4_+ \times (0,T))$ with $\partial_y \varphi (\cdot,0,\cdot)= 0$ in $\RR^3 \times (0,T)$:
	\begin{align}
\int_{\RR^3} \varphi (x,0,t) \frac{|u(x,t)|^2}{2} \, dx +c_{\alpha}&\int_0^t \int_{\RR^4_+} y^b|\overline \Delta_b u^* |^2 \varphi \, dx \, dy \, ds\nonumber\\
	\leq&\;
	\int_0^t \int_{\RR^3} \left[ \frac{|u|^2}{2} \partial_t \varphi|_{y=0} + \Big(\frac{| u |^2}{2} +p\Big) u \cdot \nabla \varphi|_{y=0} \right] \, dx \, ds\nonumber\\
 &\;-c_{\alpha} \int_0^t\int_{\RR^4_+}y^b\overline\Delta_b u^*_i\left(2\overline\nabla\varphi\cdot\overline\nabla u^*_i+u^*_i \overline\Delta_b\varphi \right)\, dx \, dy \, ds \,,\label{eqn:suit-weak-tested}
	\end{align}
where the constant $c_{\alpha}$ depends only on $\alpha$ and comes from Theorem~\ref{thm:yang}.
}\end{definition}

It is not difficult to see that for a {\em smooth} solution of \eqref{e:NS_alfa} the inequality \eqref{eqn:suit-weak-tested} holds indeed with the equality sign (for the reader's convenience we have included the proof in the appendix). A minor modification of the proof that Leray--Hopf weak solutions exist yields the existence of suitable weak solutions as well. Again, we refer to the appendix for the complete proof:

\begin{theorem}\label{t:suitable}
For any divergence-free $u_0\in L^2 (\RR^3)$ there is a suitable weak solution of \eqref{e:NS_alfa}-\eqref{e:Cauchy} on $\RR^3 \times (0, \infty)$.
\end{theorem}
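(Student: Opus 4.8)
The plan is to produce a suitable weak solution as a limit of Leray's regularized approximations — the same ones used in the proof of Theorem~\ref{t:Leray} — and then to check that the limit, which is automatically Leray--Hopf, additionally satisfies the local energy inequality \eqref{eqn:suit-weak-tested}. That inequality will be extracted by passing to the limit in the exact local energy \emph{identity} satisfied by the (smooth) approximations; the passage from an identity to a one-sided inequality comes from the lower semicontinuity of the dissipation term, which sits on the left-hand side of \eqref{eqn:suit-weak-tested}.

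I would fix a standard mollifier $\rho_\eps$ and let $(u^\eps,p^\eps)$ be the global smooth solution of $\partial_t u^\eps+((u^\eps*\rho_\eps)\cdot\nabla)u^\eps+\nabla p^\eps=-(-\Delta)^\alpha u^\eps$, $\div u^\eps=0$, $u^\eps(\cdot,0)=u_0*\rho_\eps$, with $p^\eps$ the potential-theoretic solution of $-\Delta p^\eps=\div\div((u^\eps*\rho_\eps)\otimes u^\eps)$. As in the proof of Theorem~\ref{t:Leray} the approximations satisfy $\|u^\eps\|_{L^\infty((0,T),L^2)}+\|u^\eps\|_{L^2((0,T),H^\alpha)}\le\|u_0\|_{L^2}$ on every slab, hence, by Sobolev embedding and interpolation, a uniform bound in $L^{10/3}(\RR^3\times(0,T))$, and hence, by Calder\'on--Zygmund, a uniform bound for $p^\eps$ in $L^{5/3}$; reading $\partial_t u^\eps$ off the equation gives a uniform bound in $L^1((0,T),H^{-N})$ for $N$ large. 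By the Aubin--Lions lemma, along a subsequence $u^\eps\to u$ strongly in $L^2_{\mathrm{loc}}(\RR^3\times(0,T))$ and a.e., and interpolating with the $L^{10/3}$ bound, strongly in $L^3_{\mathrm{loc}}$ (and so does $u^\eps*\rho_\eps$), while $p^\eps\rightharpoonup p$ weakly in $L^{5/3}_{\mathrm{loc}}$; as in the proof of Theorem~\ref{t:Leray}, $(u,p)$ is a Leray--Hopf weak solution. Since each $u^\eps$ is smooth, testing the regularized momentum equation with $u^\eps\,\varphi(\cdot,0,t)$ over $\RR^3\times(0,t)$, using $\div(u^\eps*\rho_\eps)=\div u^\eps=0$ together with \eqref{eqn:frac-lap-est} and the integration by parts on $\RR^4_+$ (the computation that turns \eqref{eqn:suit-weak-tested} into an equality for smooth solutions, carried out in the appendix), gives \eqref{eqn:suit-weak-tested} with the equality sign for $(u^\eps,p^\eps)$, the only discrepancy being that the pressure is paired with $u^\eps$ rather than with $u^\eps*\rho_\eps$, which is harmless in the limit.

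The crux is the behaviour of the extensions $(u^\eps)^*=P(\cdot,y)*u^\eps$. From the explicit (Fourier) form of the Poisson-type kernel $P$ established in the appendix one has
\[
\int_{\RR^4_+}y^b|\overline\nabla v^*|^2\,dx\,dy\le C_\alpha\|v\|_{\dot H^{\alpha-1}(\RR^3)}^2,\qquad\int_{\RR^4_+}y^b|\overline\Delta_b v^*|^2\,dx\,dy\le C_\alpha\|v\|_{\dot H^{\alpha}(\RR^3)}^2 .
\]
The second bound together with the uniform $L^2_tH^\alpha_x$ bound gives $\overline\Delta_b(u^\eps)^*\rightharpoonup\overline\Delta_b u^*$ weakly in $L^2(\RR^4_+\times(0,T),y^b\,dx\,dy\,dt)$, the limit being identified through $\mathcal{D}'$-convergence. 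What I would additionally prove is that $(u^\eps)^*$ and $\overline\nabla(u^\eps)^*$ converge \emph{strongly} in that space on $\operatorname{supp}\varphi$: picking $\chi\in C^\infty_c(\RR^3)$ equal to $1$ near the spatial projection of $\operatorname{supp}\varphi$, the fields $\chi u^\eps$ are bounded in $L^2_tH^\alpha_x$ and, being compactly supported, converge to $\chi u$ strongly in $L^2(\RR^3\times(0,T))$, hence by interpolation strongly in $L^2_t\dot H^{\alpha-1}_x$, so $(\chi u^\eps)^*\to(\chi u)^*$ and $\overline\nabla(\chi u^\eps)^*\to\overline\nabla(\chi u)^*$ strongly in $L^2(\RR^4_+\times(0,T),y^b)$ by the estimates above; while $((1-\chi)u^\eps)^*$, restricted to $\operatorname{supp}\varphi$, is a convolution of $u^\eps$ against a kernel that is smooth there — the singularities of $P$ and its derivatives lie on the diagonal, which is avoided — so it converges there, with its derivatives, by the weak $L^2$ convergence of $u^\eps$.

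Finally I would integrate the $(u^\eps,p^\eps)$ identity against an arbitrary nonnegative $\psi\in C^\infty_c((0,T))$ and let $\eps\to 0$. The boundary term $\int\varphi(\cdot,0,t)|u^\eps|^2/2$ passes by strong $L^2_{\mathrm{loc}}$ convergence; the dissipation term $c_\alpha\int_0^t\!\int y^b|\overline\Delta_b(u^\eps)^*|^2\varphi$, since $\varphi\ge 0$, obeys $\liminf\ge$ by weak lower semicontinuity of the squared $L^2$-norm with the nonnegative weight $y^b\varphi$ — this is exactly where ``$=$'' becomes ``$\le$''. On the right-hand side the cubic terms pass by strong $L^3_{\mathrm{loc}}$ convergence, the pressure term by weak $L^{5/3}_{\mathrm{loc}}$ convergence of $p^\eps$ against strong $L^{5/2}_{\mathrm{loc}}$ convergence of $u^\eps$, and the extension error term $c_\alpha\int y^b\overline\Delta_b(u^\eps)^*_i\big(2\overline\nabla\varphi\cdot\overline\nabla(u^\eps)^*_i+(u^\eps)^*_i\overline\Delta_b\varphi\big)$ passes because it pairs the weakly convergent $\overline\Delta_b(u^\eps)^*$ with the strongly convergent $\overline\nabla(u^\eps)^*$ and $(u^\eps)^*$ against the bounded compactly supported coefficients $y^b\overline\nabla\varphi$, $y^b\overline\Delta_b\varphi$. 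This yields $\int_0^T\psi(t)\,G(t)\,dt\le 0$ for every such $\psi$, where $G(t)$ is the difference of the two sides of \eqref{eqn:suit-weak-tested}; hence \eqref{eqn:suit-weak-tested} holds for a.e.\ $t$, and, being true on every slab and for every admissible $\varphi$, it makes $(u,p)$ a suitable weak solution on $\RR^3\times(0,\infty)$. The main obstacle — the only feature genuinely beyond the classical case $\alpha=1$ — is this error term: its two factors are a priori only weakly convergent, and the resolution is the asymmetry above, namely that $\overline\nabla(u^\eps)^*$ is controlled by the subcritical norm $\dot H^{\alpha-1}$, in which convergence is strong by interpolation, while $\overline\Delta_b(u^\eps)^*$ is controlled only by $\dot H^\alpha$, where it is merely weak — and a product of a weakly and a strongly convergent sequence converges — with the mild non-locality of the extension absorbed by the cutoff splitting.
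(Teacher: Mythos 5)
Your strategy is the same as the paper's: mollified Leray approximations, the local energy \emph{identity} for the smooth approximations, lower semicontinuity of the $y^b|\overline\Delta_b(u^\eps)^*|^2$ term (this is where the equality degrades to the inequality), strong convergence of $(u^\eps)^*$ in $L^2(y^b)$ on the support of $\varphi$, and an upgrade to strong convergence of $\overline\nabla(u^\eps)^*$ through a subcritical mechanism so that the commutator term passes to the limit against the merely weakly convergent $\overline\Delta_b(u^\eps)^*$. The paper obtains the gradient upgrade by applying the interpolation inequality \eqref{e:interp_2} of Lemma \ref{lem:interpol} to the difference $(u_{\eps_k}-u)^*$ (bounded $\overline\Delta_b$-energy times vanishing weighted $L^2$ norm), with the weighted $L^2$ convergence of $u^*_{\eps_k}$ coming from the Poisson formula and the global strong $L^2$ convergence of $u_{\eps_k}$; your Fourier-side identity $\int y^b|\overline\nabla v^*|^2\simeq\|v\|_{\dot H^{\alpha-1}}^2$ plus space-time interpolation is a legitimate variant of the same idea for the near part.

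The one step whose justification does not stand as written is the far part: you claim that $((1-\chi)u^\eps)^*$ and its derivatives converge on ${\rm supp}\,\varphi$ ``by the weak $L^2$ convergence of $u^\eps$''. Weak convergence is not enough, because the convolution acts only in the space variables, slice by slice in time: the operator is of the form (compact in $x$)$\,\otimes\,$(identity in $t$), which is not compact on $L^2_{x,t}$, so time oscillations (e.g.\ $u^\eps(z,t)=a(z)e^{it/\eps}$) would give a far field converging only weakly — and a weak--weak pairing with $\overline\Delta_b(u^\eps)^*$ does not pass to the limit. The repair is immediate with tools you already invoked: the Aubin--Lions argument gives \emph{strong} convergence $u^\eps\to u$ in $L^2(\RR^3\times(0,T))$ (the paper proves it globally, not just locally), and since $z\mapsto (1-\chi)(z)\,\partial^k P(x-z,y)$ is square-integrable uniformly for $(x,y)$ in the compact set ${\rm supp}\,\varphi$ (the kernel decays like $|x-z|^{-3-2\alpha}$ away from the diagonal), the far field and its derivatives converge strongly in $L^2({\rm supp}\,\varphi, y^b)$; if one only wants local strong convergence of $u^\eps$, the tail must instead be controlled uniformly via the $L^\infty_tL^2_x$ bound and the decay of $P$, in the spirit of Lemma \ref{lem:withtails}. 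Alternatively, you can dispense with the near/far splitting altogether and argue as the paper does, getting $u^*_{\eps}\to u^*$ strongly in $L^2(\RR^4_+\times[0,t],y^b)$ from the Poisson representation and then the gradients from \eqref{e:interp_2}.
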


\subsection{Statements of the $\eps$-regularity Theorems}
We are finally ready to state our main $\eps$-regularity statements.  First of all Theorem \ref{thm:eps_reg-massimale} will be derived from a similar one where the smallness assumption is in fact weaker. In order to state it we need to introduce a suitable ``tail functional'':
\begin{equation}\label{def:tail}
\fT(u;x,t,r) := r^{5\alpha -2} \int_{t-r^{2\alpha}}^t \sup_{R\geq \frac{r}{4}} \frac{1}{R^{3\alpha}} \mean{B_R(x)} |u(y,t)|^{2}\, dy\, dt\,.
\end{equation}
%$$
%Tail(u; 1,x_0) = \left(\int_{-1}^0\left(\sup_{R\ge\fueta}R^{-\fusigma} \left(\mean{B_R(x_0)} |u|^{1+\delta}\right)^\frac{1}{1+\delta}\right)^2\right)^\frac 12
%$$

\begin{theorem}\label{thm:eps_reg-variant}
	There exist positive constants $\varepsilon$ and $\kappa$, depending only on $\alpha$, such that, if  $(u,p)$ is a suitable weak solution of the hyperdissipative Navier-Stokes equations \eqref{e:NS_alfa} in the slab $\RR^3 \times (-2^{2\alpha},0)$ and satisfies
	\begin{equation}\label{eccessivoealternativo-variant}
	\int_{Q_2} \left( |u|^{3}+|p|^\frac 32\right) \,dx\,dt + \fT(u;0,0,2)<\varepsilon\,,
	\end{equation}
	then $u\in C^{0,\kappa}\left(Q_1;\RR^3\right)$. Moreover, the constants are independent of $\alpha$ in the sense of Theorem \ref{thm:eps_reg-massimale}. 
\end{theorem}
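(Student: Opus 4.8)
The plan is to derive Theorem~\ref{thm:eps_reg-variant} by a Caffarelli--Kohn--Nirenberg-type excess-decay iteration, following the scheme outlined in the introduction: build the local energy inequality from Definition~\ref{def:sws}, extract compactness, compare with a linear model, prove one step of excess decay, and iterate. Concretely, I would first introduce the natural scale-invariant excess quantities adapted to the hyperdissipative scaling $Q_r(x_0,t_0) = B_r(x_0)\times(t_0-r^{2\alpha},t_0]$: a ``velocity excess'' like $r^{-(6-4\alpha)}\int_{Q_r}|u-\langle u\rangle_r|^3$ (or an $L^2$-in-time--$L^3$-in-space variant), a ``pressure excess'' $r^{-(6-4\alpha)}\int_{Q_r}|p - \langle p\rangle|^{3/2}$, and the tail functional $\fT(u;x_0,t_0,r)$ already defined in \eqref{def:tail}, which controls the nonlocal contribution of the fractional Laplacian through the extension $u^*$. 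The key structural input is Lemma~\ref{lemma:suitable}: testing \eqref{eqn:suit-weak-tested} with a cutoff $\varphi$ concentrated near $Q_r$ (using the extension $u^*$ of $u$ and the Caffarelli--Silvestre/Yang machinery of Theorems~\ref{thm:yang} and~\ref{thm:CF} to handle the weighted Dirichlet energy $\int y^b|\overline\nabla u^*|^2$) yields a local energy estimate that bounds $\sup_t\int_{B_r}|u|^2 + \int_{Q_r} y^b|\overline\Delta_b u^*|^2\varphi$ by the excess plus pressure plus tail at a comparable larger scale. This is where the specific form of the suitable-weak-solution inequality, with the $2\overline\nabla\varphi\cdot\overline\nabla u^*$ and $u^*\overline\Delta_b\varphi$ terms, must be estimated carefully using weighted Sobolev/trace inequalities on $\RR^4_+$ with weight $y^b$, $b = 3-2\alpha \in (\tfrac12,1)$.

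The second step is compactness (Section~\ref{s:compactness}, Lemma~\ref{lem:lions}): if a sequence of suitable weak solutions $(u_k,p_k)$ has vanishing rescaled excess and tail, then after the natural parabolic rescaling the $u_k$ converge strongly in $L^3_{\rm loc}$ (by an Aubin--Lions-type argument using the local energy bound for compactness in space plus the equation for equicontinuity in time), the pressures converge in $L^{3/2}_{\rm loc}$, and the limit is a weak solution of a \emph{linearized} problem: $\partial_t v + \nabla q = -(-\Delta)^\alpha v$, $\div v = 0$, where the quadratic term $(u\cdot\nabla)u$ has dropped out in the limit because the excess quantities that measure the oscillation of $u$ tend to zero. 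The linear estimate (Section~\ref{s:linear}, Lemma~\ref{lem:hoelder}) then gives interior H\"older regularity and, crucially, a \emph{decay} of the excess of $v$ at small scales: $\mathrm{Exc}(v; \theta r) \le C\theta^{2\gamma}\,\mathrm{Exc}(v;r)$ for some $\gamma>0$, proved by standard Schauder-type / Campanato-space arguments for the fractional heat operator (or via the biharmonic extension and energy estimates on $\RR^4_+$ with the $y^b$ weight). I would prove the linear decay directly on the extended problem to keep the weighted-space technology uniform.

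Combining these gives the excess-decay proposition (Section~\ref{s:excess_decay}, Proposition~\ref{prop:exc_decay}): there exist $\theta\in(0,1)$ and $\eps_1>0$ such that if the total excess $\mathrm{Exc}(u;r) + \fT(u;r)$ at scale $r$ is below $\eps_1$, then it is at most, say, $\tfrac12$ of its value at scale $\theta r$ (up to a controlled error that is superlinear in the smallness parameter, coming from the quadratic nonlinearity and the pressure). One delicate point is that the pressure is nonlocal and the tail functional at scale $\theta r$ must be controlled by the excess and tail at scale $r$: here one splits $p$ into a near part (controlled by $u$ in $Q_r$ via the Calder\'on--Zygmund bound for the potential-theoretic solution of $-\Delta p = \div\div(u\otimes u)$) and a far part (which is harmonic in $B_r$, hence controlled by its average plus a tail term), and similarly splits $u^*$ via the Poisson-type kernel $P(\cdot,y)$ so that the far field contributes to $\fT$. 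Iterating the proposition over dyadic scales $\theta^k r$ yields Campanato-type decay $\int_{Q_\rho}|u-\langle u\rangle|^3 \lesssim \rho^{3+3\kappa'}$, which by the Campanato characterization gives $u\in C^{0,\kappa}(Q_1)$ with the explicit scaling $\|u\|_{C^\kappa}\le Cr^{1-2\alpha(\kappa+1)}$ coming from undoing the rescaling. The uniformity in $\alpha\in[\alpha_0,\tfrac54]$ is obtained by checking that every constant above ($b\in(\tfrac12,1)$ stays in a compact range, the weighted Sobolev constants, the Calder\'on--Zygmund constants, the linear decay rate $\gamma$) depends only on a lower bound $\alpha_0>1$.

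The main obstacle I expect is the linear decay estimate Lemma~\ref{lem:hoelder} together with its interaction with the tail: one must show that solutions of the linearized hyperdissipative system, which is nonlocal, nonetheless have excess decaying like a positive power of the scale \emph{including} control of the tail functional $\fT$ at the small scale. For the local Navier--Stokes ($\alpha=1$) this is classical and local; here the nonlocality of $(-\Delta)^\alpha$ means that small-scale behaviour of $v$ genuinely sees the large-scale values, so the decay statement must be formulated as ``$\mathrm{Exc}(v;\theta r) + \fT(v;\theta r) \le C\theta^{2\gamma}(\mathrm{Exc}(v;r) + \fT(v;r))$'' and proved via the Yang extension, where interior estimates for $\overline\Delta_b^2 v^* = 0$ on half-balls of $\RR^4_+$ (De Giorgi--Nash--Moser for the degenerate-elliptic operator $\div(y^b\overline\nabla\cdot)$, as in the $A_2$-weight theory) give the needed Liouville/decay structure. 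Closing this, and bookkeeping the superlinear error terms so that the iteration actually contracts, is the technical heart of the argument; everything else is adaptation of the Caffarelli--Kohn--Nirenberg blow-up scheme to the parabolic scaling $t\sim r^{2\alpha}$ and to the weighted half-space.
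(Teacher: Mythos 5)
Your plan coincides with the paper's own proof: the local energy inequality (Lemma \ref{lemma:suitable}), the compactness statement (Lemma \ref{lem:lions}), the linear estimate (Lemma \ref{lem:hoelder}) and the resulting excess-decay Proposition \ref{prop:exc_decay}, iterated over dyadic scales and combined with a Campanato/Morrey argument and a separate treatment of the nonlocal tail at small radii, is exactly how Theorem \ref{thm:eps_reg-variant} is established there (with the excess decay obtained by a compactness/contradiction argument rather than a quantified perturbation error, which is an equivalent bookkeeping choice). The only divergence is at the sub-lemma level: you would prove the linear H\"older estimate via De Giorgi--Nash--Moser for the degenerate weighted extension, whereas the paper bootstraps energy estimates for the constant-coefficient linearized system and recovers time regularity directly from the equation; both work and neither changes the architecture.
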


The second statement uses the Caffarelli-Silvestre extension $(\nabla u)^\flat$ of $\nabla u$. 
Observe that
\begin{equation*}
c_\alpha\int_{\mathbb R^4_+} y^{b} |\overline\nabla (\nabla u)^\flat|^2 (x,y,t)\, dx\, dy 
= \int_{\mathbb R^3} \left|(-\Delta)^{\frac{\alpha-1}{2}} \nabla u\right|^2 (x,t)\, dx=  \int_{\mathbb R^3} |(-\Delta)^\frac{\alpha}{2} u|^2 (x,t)\, dx\, .
\end{equation*}
In particular we easily conclude that, for a Leray--Hopf weak solution on $\RR^3\times (0,T)$, 
\begin{equation}\label{e:energia_finita}
c_\alpha\int_0^T\int_{\mathbb R^4_+} y^{b} |\overline\nabla (\nabla u)^\flat|^2 (x,y,t)\, dx\, dy\, dt
\leq \frac{1}{2} \int_{\RR^3} |u_0|^2 (x)\, dx < \infty\, .
\end{equation}
We introduce a suitable localized and rescaled version of the left hand side. In particular, we define first a suitable counterpart of the parabolic cylinders in
the domain $\RR^4_+\times \RR$:
\[
Q^*_r (x,t) = B_r (x) \times [0,r) \times (t-r^{2\alpha}, t]\, 
\]
and we then consider the quantity
\begin{align*}
\fE(u;x,t, r) & :=\frac{1}{r^{5-4\alpha}}\int_{Q^*_r(x,t)} y^{b}|\overline\nabla ( \nabla u)^\flat |^2\,dx\,d\tau.
\end{align*}
\begin{theorem}\label{t:CKN}
%	For every $\varepsilon>0$ t
	There exists $\delta>0$ such that, if $(u,p)$ is a suitable weak solution of \eqref{e:NS_alfa} in $\RR^3\times I$ and, at some point $(x,t)$, we have
	\begin{equation}\label{eps_reg:hp2}
	\limsup_{r\to 0}\fE(u;x,t, r) <\delta
	\end{equation}
	then $(x,t)$ is a regular point. % {\color{red} Attenzione! Qui $\varepsilon$ non ricompare nello statement.}
%	\begin{equation}\label{eps_reg:th}
%	\liminf_{r\to 0}\frac{1}{r^{6-4\alpha}}\int_{Q_r(x,t)}(|u|^3+|p|^\frac 32)<\varepsilon\,.
%	\end{equation}
\end{theorem}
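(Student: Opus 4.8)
The plan is to deduce Theorem~\ref{t:CKN} from the first $\varepsilon$-regularity statement, Theorem~\ref{thm:eps_reg-variant}, by showing that the smallness of the single scale-invariant quantity $\fE(u;x,t,r)$ for some small radius $r$ forces, after an appropriate rescaling, the smallness of the full quantity appearing in \eqref{eccessivoealternativo-variant}, i.e. of $\int_{Q_2}(|u|^3 + |p|^{3/2}) + \fT(u;0,0,2)$. The first step is therefore to record how all the relevant quantities transform under the natural parabolic scaling $u_r(x,t) = r^{2\alpha-1} u(x_0 + rx, t_0 + r^{2\alpha} t)$, $p_r(x,t) = r^{4\alpha-2} p(x_0+rx, t_0 + r^{2\alpha} t)$, which maps suitable weak solutions to suitable weak solutions (the $r^{5-4\alpha}$ and $r^{6-4\alpha}$ weights in the definitions of $\fE$, $\fT$ and of \eqref{eccessivoealternativo-variant} being precisely the scaling exponents). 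Under this rescaling $\limsup_{r\to 0}\fE(u;x,t,r) < \delta$ says that for every sufficiently small $r$ the rescaled solution $(u_r, p_r)$ satisfies $\int_{Q^*_1} y^b |\overline\nabla(\nabla u_r)^\flat|^2 < \delta$.

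The heart of the argument is then an \emph{interpolation / embedding} step: one must control $\int_{Q_2}(|u|^3 + |p|^{3/2})$ and the tail $\fT(u;0,0,2)$ of the rescaled solution in terms of the small Dirichlet-type energy $\int_{Q^*_1} y^b|\overline\nabla(\nabla u)^\flat|^2$, \emph{plus} quantities that are a priori bounded uniformly in $r$ by the global energy inequality \eqref{e:energia_finita} (for instance $\|u\|_{L^\infty_t L^2_x} \cap L^2_t H^\alpha_x$ on a fixed macroscopic cylinder, which after rescaling produces only favorable powers of $r$ in the supercritical range $\alpha>1$, or stays bounded). Concretely: from $\overline\nabla(\nabla u)^\flat \in L^2(y^b)$ on $Q^*_1$ together with the boundary trace one recovers $(-\Delta)^{(\alpha-1)/2}\nabla u = (-\Delta)^{\alpha/2} u$ in $L^2$ locally, hence $u$ in $H^\alpha_{loc}$ locally in space at a.e.\ time slice; since $\alpha > 1$, $H^\alpha(\RR^3) \hookrightarrow L^\infty$ and even into Hölder spaces, and a Poincaré-type inequality on $Q^*_1$ lets one absorb the low-frequency/constant part using the $L^2$-control coming from the energy inequality. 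This yields an $L^3_{t,x}$ bound for $u$ — and then, via the Calderón--Zygmund theory for the pressure equation $-\Delta p = \div\div(u\otimes u)$ together with the standard splitting of $p$ into a local CZ part and a harmonic far-field part (whose far-field contribution is again controlled by the globally bounded energy), an $L^{3/2}$ bound for $p$ — on $Q_2$ for the rescaled solution, with the bound of the form $C(\text{energy})\,\delta^{1/2} + C(\text{energy})\,r^{\theta}$ for some $\theta > 0$. The tail $\fT$ is handled in the same spirit: it only involves $|u|^2$ averaged over balls of radius $\geq r/4$, which are exactly the scales controlled either by the local $H^\alpha$ estimate or, for the larger annuli, directly by $\|u(\cdot,t)\|_{L^2}^2$, producing again small-plus-power-of-$r$ bounds.

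Putting these together: fix $\delta$ so small and then $r$ so small that the rescaled solution $(u_r, p_r)$ satisfies \eqref{eccessivoealternativo-variant} with the $\varepsilon$ of Theorem~\ref{thm:eps_reg-variant}; that theorem then gives $u_r \in C^{0,\kappa}(Q_1)$, hence $u \in C^{0,\kappa}(Q_r(x_0,t_0))$, so $(x_0,t_0)$ is a regular point. One subtlety to be careful about is that \eqref{eps_reg:hp2} is a $\limsup$ condition at the single point $(x,t)$, so the choice of $r$ is legitimate but one should not expect uniformity; this is exactly what is needed, since the conclusion is only pointwise.

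I expect the main obstacle to be the interpolation step, and in particular the treatment of the pressure and of the tail: the naive attempt to bound $\|u\|_{L^3(Q_2)}$ purely by the \emph{local} quantity $\fE$ fails because $\fE$ controls only a homogeneous seminorm of $\overline\nabla(\nabla u)^\flat$ on a bounded region, so one genuinely must couple it with the globally available energy bound and keep meticulous track of the scaling powers to ensure the error terms carry a strictly positive power of $r$ (which forces, and is guaranteed by, the restriction $\alpha > 1$; at $\alpha = 1$ this is the classical borderline case and the tail term $\fT$ would not be needed). Verifying that the $r$-dependent error terms genuinely vanish as $r\to 0$ — i.e.\ that every "bad" term that is not multiplied by $\delta^{1/2}$ comes with a factor $r^{\theta}$, $\theta>0$ — is the computation that makes or breaks the proof, and is precisely where the supercriticality is used.
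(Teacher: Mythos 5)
Your overall reduction (rescale and feed the rescaled solution into Theorem \ref{thm:eps_reg-variant}) is the same as the paper's, and your observation that the limsup hypothesis legitimizes choosing a small scale is fine. The gap is in the central ``interpolation'' step. You propose to control $\int_{Q_2}\left(|u_r|^3+|p_r|^{3/2}\right)+\fT(u_r;0,0,2)$ by the small quantity $\fE$ \emph{plus} terms coming from the global energy bound, claiming the latter ``produce only favorable powers of $r$ in the supercritical range $\alpha>1$''. This is backwards: under the scaling \eqref{e:scaling} one has $\int_{B_1}|u_r(\cdot,t)|^2\,dx = r^{4\alpha-5}\int_{B_r(x_0)}|u(\cdot,t_0+r^{2\alpha}t)|^2\,dx$, and since $\alpha\le \frac54$ the exponent $4\alpha-5$ is $\le 0$, so the globally bounded energy gives a contribution that \emph{blows up} (or at best stays bounded, at $\alpha=\frac54$) as $r\to 0$; the same happens for the far-field part of the pressure and for the tail. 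Supercriticality is precisely the obstruction here, not the mechanism that makes the error terms small, so the proposed bound of the form $C(\mathrm{energy})\,\delta^{1/2}+C(\mathrm{energy})\,r^{\theta}$ cannot be obtained by interpolation against the global energy. (A smaller inaccuracy: $H^\alpha(\RR^3)\hookrightarrow L^\infty$ requires $\alpha>\frac32$, so it fails in the range $\alpha\le\frac54$; this is not fatal for an $L^3$ bound, but the quoted embedding is wrong.)

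What is genuinely missing is any use of the \emph{suitable weak solution} property, which is indispensable and is where the paper's proof does its work. The paper first proves a Poincar\'e-type decay iteration (Lemma \ref{l:BFT_small}), not using the equation, which converts $\limsup_{r\to 0}\fE(r)<\delta$ — smallness at \emph{all} small scales, not one — into smallness of $\limsup_{r\to0}\left(\fB(r)+\fF(r)+\fT(r)\right)$; this is how the low-frequency/averaged part of $u$ is controlled, rather than by the global energy. It then proves the key one-step decay \eqref{e:tesi_omega}, $\fA(\theta r)^{3/2}+\fD(\theta r)^2\le \frac12\bigl(\fA(r)^{3/2}+\fD(r)^2\bigr)+C\omega^{3/2}$, using the local energy inequality of Lemma \ref{lemma:suitable} (with $M=0$, $f=|[u]_{B_1}|^2$), the interpolation Lemma \ref{lemma:CKN-supporto}, and the splitting of $p$ into a Calder\'on--Zygmund part and a harmonic part localized in $B_1$ (not against the global pressure). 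Iterating this across scales yields $\limsup_{r\to0}\left(\fC(r)+\fD(r)+\fT(r)^{1/2}\right)<\varepsilon$, and only then does the rescaling-plus-Theorem \ref{thm:eps_reg-variant} step close the argument. Without these two scale-iteration ingredients your interpolation step fails, so as written the proposal does not prove the theorem.
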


\section{The energy inequality}\label{s:energy}

In this section we consider the hyperdissipative Navier-Stokes equation with a constant drift term $M\in\RR^3$ and a  scalar factor $L\in\RR$ multiplying the quadratic nonlinearity, namely 
\begin{align}\label{e:sws-const}
\left\{
\begin{array}{l}
\partial_t u + ((Lu+M)\cdot \nabla) u + \nabla p = - (-\Delta)^{\alpha} u\\ \\
\div u =0\,. 
\end{array}\right.
\end{align}

\noindent The notion of suitable weak solution generalizes trivially to this context in the following way: for a.e. $t\in [0,T]$ and all nonnegative test functions $\varphi \in C^{\infty}_c ( \RR_4^+ \times (0,T))$ with 
\begin{equation}\label{e:bound_test}
\partial_y \varphi (\cdot,0,\cdot)= 0 \qquad \mbox{on $\RR^3 \times (0,T)$}
\end{equation} 
we require that a suitable weak solution of \eqref{e:sws-const} satisfies
	\begin{multline}\label{eqn:suit-weak-drift}
	\int_{\RR^3} \varphi(\cdot,0,t) \frac{|u(\cdot,t)|^2}{2}
	% \, dx
	 +c_{\alpha}\int_0^t \int_{\RR^4_+} y^b|\overline \Delta_b u^* |^2 \varphi %\, dx \, dy \, ds 
	\leq 
	\int_0^t \int_{\RR^3} \left[ \frac{|u|^2}{2} \partial_t \varphi(\cdot,0,\cdot) + \Big(\frac{L| u |^2}{2} +p\Big) u \cdot \nabla \varphi(\cdot,0,\cdot) \right]
	% \, dx \, ds 
	\\
 +\int_0^t\int_{\RR^3}\frac{|u|^2}{2} M\cdot\nabla\varphi(\cdot,0,\cdot)
 %\,dx\,ds
 -c_{\alpha}\int_0^t\int_{\RR^4_+}y^b\left(2\overline\nabla\varphi\overline\nabla u^* \overline\Delta_b u^* +u^* \overline\Delta_b\varphi\overline\Delta_b u^*\right)%\, dx \, dy \, ds \,.
	\end{multline}
(the integrals in space have to be intended in $dx$ when the domain of integration is $\RR^3$, and in $dx\,dy$ when the domain of integration is $\RR^4_+$; the constant $c_{\alpha}$ depends only on $\alpha$ and comes from Theorem~\ref{thm:yang}).
The system arises naturally because we will subtract constants from suitable weak solutions of the hyperdissipative Navier-Stokes equations and we will rescale them.
In particular we have the following

\begin{lemma}\label{lemma:drift}
Let $(u,p)$ be a suitable weak solution of the hyperdissipative Navier-Stokes system. Let $M\in\RR^3$ and $L>0$, let $\tilde p\in L^1((0,T))$. Then $v := (u-M)/L$ and $q:= (p - \tilde p)/L$ solve \eqref{e:sws-const} and satisfy the modified energy inequality \eqref{eqn:suit-weak-drift}.
\end{lemma}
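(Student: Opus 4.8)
The claim that $(v,q)$ solves \eqref{e:sws-const} is purely algebraic: dividing \eqref{e:NS_alfa} by $L$ and using that $(-\Delta)^\alpha$ and $\partial_t$ annihilate the constant $M$, that $(u\cdot\nabla)M=0$, that $\nabla\tilde p=0$, and that $Lv+M=u$, one reads off $\partial_t v+((Lv+M)\cdot\nabla)v+\nabla q=-(-\Delta)^\alpha v$ together with $\div v=0$. The substance is the energy inequality, which I would obtain by plugging $u=Lv+M$ and $p=Lq+\tilde p$ into \eqref{eqn:suit-weak-tested}. Since the Poisson-type kernel of Theorem~\ref{thm:yang} reproduces constants, the extension of $u$ is $u^*=Lv^*+M$ with $v^*:=P(\cdot,y)\ast v$, so that $\overline\Delta_b u^*=L\,\overline\Delta_b v^*$ and $\overline\nabla u^*_i=L\,\overline\nabla v^*_i$. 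Moreover $u(\cdot,s)\in H^\alpha$ for a.e.\ $s$, so for a.e.\ $s$ all the identities of Theorem~\ref{thm:yang} hold, and by the linearity just noted they pass to $v^*$: in particular $\overline{\Delta}^2_b v^*=0$, $\lim_{y\to0}y^b\partial_y v^*=0$, and $(-\Delta)^\alpha v=c_\alpha\lim_{y\to0}y^b\partial_y\overline\Delta_b v^*$ (all of this with ``$v^*$'' meaning $(u^*-M)/L$, which is legitimate even though $v\notin L^2(\RR^3)$ when $M\neq0$).

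Substituting, dividing \eqref{eqn:suit-weak-tested} by $L^2$ and expanding $|Lv+M|^2=L^2|v|^2+2Lv\cdot M+|M|^2$, the terms that are purely quadratic in $v$, the ones in $v^*$, and the ones carrying $q$ reassemble exactly \eqref{eqn:suit-weak-drift} for $(v,q)$ with the given $L,M$. What is left over is, on the left, the quantity $A:=\int_{\RR^3}\varphi(x,0,t)\big(\tfrac{v(x,t)\cdot M}{L}+\tfrac{|M|^2}{2L^2}\big)\,dx$ split off from $\tfrac12\int|u(\cdot,t)|^2\varphi|_{y=0}$, and, on the right, a collection of ``residual'' lower-order terms: those linear in $M$, those containing $\tilde p$ or $|M|^2$, and the term $-c_\alpha\tfrac{M_i}{L}\int_0^t\!\int_{\RR^4_+}y^b\,\overline\Delta_b v^*_i\,\overline\Delta_b\varphi$. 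The heart of the proof is that these residual terms sum precisely to $A$, which I would carry out for a.e.\ $t$ (discarding the union of the relevant null sets). Two cancellations are at work. First, $\div v=0$ gives $\int_{\RR^3}u\cdot\nabla\varphi(x,0,s)\,dx=0$ for every $s$, which kills every residual term carrying a factor $\tilde p(s)$ or $\tfrac{|M|^2}{2L}$ --- and here the hypothesis $\tilde p\in L^1((0,T))$ is exactly what makes those time integrals absolutely convergent, since $\nabla\varphi|_{y=0}$ is bounded with compact $x$-support and $u\in L^\infty_t L^1_{x,\mathrm{loc}}$. Second, the surviving residual terms --- $\tfrac ML\!\cdot$(convective flux, pressure flux, $\partial_t\varphi$) together with the $\overline\Delta_b v^*_i\,\overline\Delta_b\varphi$ term --- are exactly what one gets by testing the equation \eqref{e:sws-const} for $v$ against $\psi:=\tfrac ML\,\varphi(\cdot,0,\cdot)$, which is \emph{not} divergence free, so its weak form produces the pressure contribution $-\int q\,\div\psi=-\tfrac1L\int q\,M\cdot\nabla\varphi|_{y=0}$ and, after the weak fundamental theorem of calculus in time on $[0,t]$, the boundary term $\int_{\RR^3}v(x,t)\cdot\tfrac ML\varphi(x,0,t)\,dx$ at $s=t$ (the one at $s=0$ vanishing since $\varphi=0$ near $t=0$); the fractional-dissipation pairing $\int_{\RR^3}(-\Delta)^\alpha v\cdot\psi\,dx=c_\alpha\tfrac{M_i}{L}\int_{\RR^4_+}y^b\,\overline\Delta_b v^*_i\,\overline\Delta_b\varphi\,dx\,dy$ is the same extension integration-by-parts identity that underlies \eqref{eqn:suit-weak-tested} for smooth solutions, and it relies on $\overline{\Delta}^2_b v^*=0$, $\lim_{y\to0}y^b\partial_y v^*=0$ and $\partial_y\varphi(\cdot,0,\cdot)=0$ so that all boundary terms at $y=0$ drop (this is where $b=3-2\alpha>0$ enters). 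Combining these with $\int_0^t\!\int_{\RR^3}\tfrac{|M|^2}{2L^2}\partial_t\varphi|_{y=0}=\tfrac{|M|^2}{2L^2}\int_{\RR^3}\varphi(x,0,t)\,dx$, the residual terms add up to exactly $A$, and \eqref{eqn:suit-weak-drift} follows from \eqref{eqn:suit-weak-tested} (divided by $L^2>0$).

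I expect the only genuine obstacle to be the bookkeeping in the weighted half-space, namely checking that the identity for $\int_{\RR^3}(-\Delta)^\alpha v\cdot\psi\,dx$ produces no spurious boundary integrals at $y=0$ --- which is already implicit in the appendix computation that smooth solutions satisfy \eqref{eqn:suit-weak-tested} with equality --- together with the routine but necessary verification that all distributional manipulations and the time integration by parts make sense at the regularity of suitable weak solutions; these are all local in $x$ and controlled once $\tilde p\in L^1_t$. A slightly more modular alternative is to perform the two reductions in succession: first the rescaling $u\mapsto u/L$, which is entirely trivial and produces no residual terms at all, and then the translation $w\mapsto w-M/L$, which carries the content above with $L=1$.
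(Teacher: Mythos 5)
Your proposal is correct and follows essentially the same route as the paper: the $\tilde p$ term is killed by $\div u=0$, the cross term in $M$ is handled by testing the equation against the (non divergence-free) test field $M\varphi(\cdot,0,\cdot)$ with the fractional dissipation rewritten through the extension via \eqref{eqn:frac-lap-est}, \eqref{biharm} and $\partial_y\varphi(\cdot,0,\cdot)=0$, and the $|M|^2$ term is absorbed through $\int_0^t\partial_t\varphi$; whether one expands the $u$-inequality in terms of $v$ (as you do) or writes the $v$-energy and expands in $u$ (as the paper does) is the same computation. The only point you leave implicit, which the paper spells out, is the mollification argument ($u\ast\rho_\theta$, with $(u\ast\rho_\theta)^*=u^*\ast\rho_\theta$) justifying the extension integration by parts at the regularity of a suitable weak solution, but you correctly flag this as the remaining routine verification.
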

\begin{proof} We first observe that subtracting a function $\tilde{p}$ of time to the pressure does not change the Navier-Stokes equations (where the pressure enters only through its spatial gradient) nor it affects the inequality for suitable weak solution, since it adds a term of the form
\[
- \int_0^t \tilde{p} (s) \int_{\RR^3} u (x, s) \cdot \nabla \varphi (x,0,s)\, dx\, ds
\]
which vanishes because $u$ is divergence-free. Hence, we can assume that $\tilde p \equiv 0$ and that $q= p/L$.
	
Given a nonnegative test function $\varphi \in C^{\infty}_c ( \RR^4_+ \times (0,T))$ with $\partial_y \varphi (\cdot,0,\cdot)= 0$ in $\RR^3 \times (0,T)$, we write the following simple algebraic identity.
\begin{align}
\int_{\RR^3} \varphi(x,0,t) \frac{|v (x,t)|^2}{2} %\, dx
 +c_{\alpha}\int_0^t \int_{\RR^4_+} y^b|\overline\Delta_b v^* |^2 &\varphi% \, dx \,dy\, ds
  \nonumber
= \; \frac{1}{L^2}\int_{\RR^3} \varphi(x,0,t) \frac{|u(x,t)|^2}{2} %\, dx
+\frac{c_{\alpha}}{L^2}\int_0^t \int_{\RR^4_+} y^b|\overline\Delta_b u^*|^2 \varphi% \, dx \,dy\, ds
\nonumber
\\
&\; -\frac{1}{L^2} \int_{\RR^3} \varphi(x,0,t) M \cdot u(x,t)% \, dx 
+\frac{1}{L^2}\int_{\RR^3} \varphi(x,0,t)\frac{|M|^2}{2}% \, dx\,
 ,\label{e:start_en-ineq-drift}
\end{align}
where we have simply observed that $v^* = \frac{1}{L} (u^* -M)$ and thus 
\begin{equation}\label{e:v-to-u}
\overline\nabla v^* = L^{-1} \overline \nabla u^* \qquad \mbox{and}\qquad \overline\Delta_b v^* = L^{-1} \overline{\Delta}_b u^*\, . 
\end{equation}
We apply the energy inequality to the function $u$ to control the two terms in the second line. We then test the hyperdissipative Navier-Stokes system with $\varphi(\cdot,0,\cdot)M$ to control the first term in the third line:
$$
 \int_{\RR^3} \varphi(\cdot,t) u(\cdot,t)\cdot M \, dx = \int_0^t \int_{\RR^3} \Big[ \partial_t \varphi M\cdot u+  LM\cdot u \nabla \varphi \cdot u+M \cdot \nabla\varphi p- \varphi M\cdot \Delta^\alpha u\big] \, dx \, ds\,.
$$ 
Notice that the last term of the right hand side can be rewritten in terms of the extension $u^*$ by using \eqref{eqn:frac-lap-est}, the divergence theorem and \eqref{biharm}; namely for every $t\in [0,T]$
\begin{align*}
- \int_{\RR^3}\varphi(\cdot,0,t) M\cdot \Delta^\alpha u & =-c_{\alpha}\lim_{y\to 0}\int_{\RR^3}y^b\varphi(\cdot,0,t) M\cdot \partial_y\overline\Delta_bu^*=c_{\alpha}\int_{\RR^4_+} \overline{\rm div}\left(y^b\varphi M_i \overline\nabla\overline\Delta_bu^*_i\right)\\
& =c_{\alpha}\int_{\RR^4_+}y^bM_i\overline\nabla\varphi\cdot\overline\nabla\overline\Delta_b u^*_i =-c_{\alpha}\int_{\RR^4_+}y^b\overline\Delta_b\varphi M\cdot\overline\Delta_b u^*\, ,
\end{align*}
where in the last line we have integrated again by parts taking advantage of \eqref{e:bound_test}. 
To rigorously justify the formulas, we approximate $u$ with smooth functions $u\ast \rho_\theta$, where $\{\rho_\theta\}_{\theta \in (0,1)}\subseteq C^\infty_c(\RR^3)$ is a standard family of mollifiers. Notice that $(u\ast \rho_\theta)^*=u^*\ast \rho_\theta$. For smooth functions the integration by parts can be justified using Remark \ref{r:lisce}.
To take the limit in the identity we observe that all the terms in the right hand side pass to the limit since $u^*\ast \rho_\theta \to u^*$, $\overline \Delta_b u^*\ast \rho_\theta \to \overline \Delta_b u^*$ in $L^2(\RR^4_+, y^b)$ while for the left hand side we employ the distributional convergence of the $\alpha$-Laplacian.

Finally, we rewrite the last term of \eqref{e:start_en-ineq-drift} as 
\[
\int_{\RR^3} \varphi(x,0,t)\frac{|M|^2}{2} \, dx  = \int_{\RR^3}\int_0^t \partial_t\varphi(x,0,s) \frac{|M|^2}{2} \, dx\,ds .
\]
%(notice that $\Delta^2 \varphi$ has integral $0$). 
Putting together these estimates and since $u$ is divergence-free, we obtain that
	\begin{multline*}%\label{eqn:suit-weak-drift}
\int_{\RR^3} \varphi(x,0,t) \frac{|v(x,t)|^2}{2} \, dx +c_{\alpha}\int_0^t \int_{\RR^4_+} y^b|\overline \Delta_b v^* |^2 \varphi \, dx \, dy \, ds \\
\leq 
\int_0^t \int_{\RR^3} \left[ \frac{|v|^2}{2} \partial_t \varphi(x,0,s) +\Big(\frac{|v|^2}{2} u +q v \Big) \cdot \nabla \varphi(x,0,s) \right] \, dx \, ds \\
-\frac{c_{\alpha}}{L^2}\int_0^t\int_{\RR^4_+}y^b\left(2\overline\nabla\varphi\overline\nabla u^* \overline\Delta_b u^* +(u^*-M) \overline\Delta_b\varphi\overline\Delta_b u^*\right)\, dx \, dy \, ds \,,
\end{multline*}
We next add and subtract the term
\[
\int_0^t\int_{\RR^3}\frac{|v|^2}{2} M\cdot\nabla\varphi(\cdot,0,\cdot)\,dx\,ds
\]
and use \eqref{e:v-to-u} (namely $\overline \nabla u^* = L \overline \nabla v^*$ and $\overline{\Delta}_b u^* = L \overline{\Delta}_b v^*$) to conclude \eqref{eqn:suit-weak-drift}.
\end{proof}

In the next key lemma we show that the local energy inequality allows to control a suitably localized energy in terms of lower order norms of $u$. 

\begin{lemma}\label{lemma:suitable}
	Let $M \in \RR^n$, $f\in L^1([0,1])$ and $(u,p)$  be a suitable weak solution of \eqref{e:sws-const} in $\RR^3\times [-1,0]$. 
Then we have that 
	\begin{multline}\label{eqn:loc-en} 
	\sup_{t\in [-(3/4)^{2\alpha},0]}\int_{B_{3/4}} \frac{|u(x,t)|^2}{2}\, dx + \int_{Q^*_{3/4}} y^b | \overline\Delta_b u^*|^2 \, dx\, dy \, dt\\
	\leq C  (1+|M|)
	\int_{Q_{1}} |u|^2 \, dx \, dt+C  
	\int_{Q_{1}} |u|\big[|L| u |^2 -f(t)|+|p|\big]  \, dx \, dt + C \int_{Q_{1}^*} y^b  |u^*|^2 \, dx \, dy\, dt .
	\end{multline}
\end{lemma}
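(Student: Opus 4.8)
The plan is to insert a well-chosen cut-off into the local energy inequality \eqref{eqn:suit-weak-drift} and to estimate all the resulting terms; the only delicate ones are the ``bulk'' terms on $\RR^4_+$, which have no analogue in the classical Caffarelli--Kohn--Nirenberg argument. Fix $\tfrac34\le\rho<\rho''\le\tfrac78$, set $\rho'=\tfrac{2\rho+\rho''}{3}$, and for a.e.\ $t_0\in[-(\tfrac34)^{2\alpha},0]$ test \eqref{eqn:suit-weak-drift} with $\varphi(x,y,s)=\phi(x,y)^2\eta(s)$, where $\phi$ is a smooth cut-off \emph{even in $y$} (so that $\partial_y\varphi(\cdot,0,\cdot)=0$ holds automatically), with $0\le\phi\le1$, $\phi\equiv1$ on $B_\rho\times[0,\rho)$, $\operatorname{supp}\phi\subset B_{\rho'}\times[0,\rho')$ and $|\overline\nabla^k\phi|\le C(\rho'-\rho)^{-k}$ for $k\le4$, and $\eta$ is a time cut-off with $\eta\equiv1$ on $[-\rho^{2\alpha},t_0]$, vanishing for $s\le-\rho'^{2\alpha}$ (hence near the initial time $s=-1$, since $\rho'^{2\alpha}<1$) and $|\eta'|\le C(\rho'-\rho)^{-1}$. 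Bounding the left side of \eqref{eqn:suit-weak-drift} from below and taking $\sup_{t_0}$ (and $t_0\to0$ in the dissipation term), it controls
\[
\Psi(\rho):=\sup_{t_0\in[-(3/4)^{2\alpha},0]}\int_{B_\rho}\frac{|u(\cdot,t_0)|^2}{2}\;+\;c_\alpha\int_{Q^*_\rho}y^b|\overline\Delta_b u^*|^2 .
\]
On the right side, the terms with $\partial_t\varphi|_{y=0}=\phi^2\eta'$ and with $M\cdot\nabla\varphi|_{y=0}$ give $\le C(\rho'-\rho)^{-1}(1+|M|)\int_{Q_1}|u|^2$; in the term $\int(\tfrac L2|u|^2+p)\,u\cdot\nabla\varphi|_{y=0}$ we add and subtract $\tfrac{f(s)}{2}\,u\cdot\nabla\varphi|_{y=0}$, whose integral over $\RR^3$ vanishes since $\operatorname{div}u=0$ and $\varphi(\cdot,0,s)\in C^\infty_c(\RR^3)$, which gives $\le C(\rho'-\rho)^{-1}\int_{Q_1}|u|\big(|L|u|^2-f|+|p|\big)$.

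It remains to handle $-c_\alpha\int y^b\overline\Delta_b u^*_i\big(2\overline\nabla\varphi\cdot\overline\nabla u^*_i+u^*_i\overline\Delta_b\varphi\big)$. Writing $y^b\overline\Delta_b u^*_i=\overline{\operatorname{div}}(y^b\overline\nabla u^*_i)$ and integrating by parts twice in $\RR^4_+$ at fixed time — every boundary integral on $\{y=0\}$ carries a factor $y^b$ (which vanishes there, as $b>0$) or the trace $\lim_{y\to0}y^b\partial_y u^*_i=0$ from \eqref{e:Neumann_2} (Remark \ref{r:a_b}) — one obtains the identity
\[
-c_\alpha\int_{\RR^4_+} y^b\overline\Delta_b u^*_i\big(2\overline\nabla\varphi\cdot\overline\nabla u^*_i+u^*_i\overline\Delta_b\varphi\big)=2c_\alpha\int_{\RR^4_+} y^b\,\overline\nabla^2\varphi(\overline\nabla u^*_i,\overline\nabla u^*_i)-\frac{c_\alpha}{2}\int_{\RR^4_+} y^b\,\overline\Delta_b^2\varphi\,|u^*|^2 .
\]
Here evenness of $\varphi$ in $y$ is essential: it makes $\overline\Delta_b\varphi$, and then $\overline\Delta_b^2\varphi$, even smooth functions with no $1/y$-singularity, so that $|\overline\Delta_b^2\varphi|\le C(\rho'-\rho)^{-4}$. (As in the proof of Lemma \ref{lemma:drift}, these manipulations are first justified for the mollification $u\ast\varrho_\theta$ of $u$ in $x$, using $(u\ast\varrho_\theta)^*=u^*\ast\varrho_\theta$, and then passed to the limit.) Since also $|\overline\nabla^2\varphi|\le C(\rho'-\rho)^{-2}$, $0\le\eta\le1$, and $\operatorname{supp}\varphi\subset Q^*_{\rho'}$, after integrating in $s$ the two terms are
\[
\le\frac{C}{(\rho'-\rho)^2}\int_{Q^*_{\rho'}}y^b|\overline\nabla u^*|^2\;+\;\frac{C}{(\rho'-\rho)^4}\int_{Q^*_1}y^b|u^*|^2 ,
\]
the second integral being already of the form wanted in \eqref{eqn:loc-en}.

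To absorb the first integral, pick a further cut-off $\psi=\psi_{\rho',\rho''}$ with $\psi\equiv1$ on $B_{\rho'}\times[0,\rho')$, $\operatorname{supp}\psi\subset B_{\rho''}\times[0,\rho'')$, $|\overline\nabla\psi|\le C(\rho''-\rho')^{-1}$; integrating by parts at a.e.\ fixed time (the boundary term being again null by \eqref{e:Neumann_2}) and using Young's inequality,
\[
\int_{\RR^4_+}y^b\psi^2|\overline\nabla u^*|^2\le\sigma\int_{\RR^4_+}y^b\psi^2|\overline\Delta_b u^*|^2+\frac{C\big(1+(\rho''-\rho')^{-2}\big)}{\sigma}\int_{B_{\rho''}\times[0,\rho'')}y^b|u^*|^2
\]
for every $\sigma>0$. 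Integrating in $s$ (the time support being contained in that of $Q^*_{\rho''}$) and combining with the previous steps, choosing $\sigma$ comparable to $(\rho'-\rho)^2$ so that the coefficient of $\int_{Q^*_{\rho''}}y^b|\overline\Delta_b u^*|^2$ is a small fraction of $c_\alpha$, we arrive at
\[
\Psi(\rho)\le\tfrac12\,\Psi(\rho'')+\frac{C}{(\rho''-\rho)^{N}}\Big[(1+|M|)\!\int_{Q_1}\!|u|^2+\int_{Q_1}\!|u|\big(|L|u|^2-f|+|p|\big)+\int_{Q^*_1}\!y^b|u^*|^2\Big]
\]
for all $\tfrac34\le\rho<\rho''\le\tfrac78$, with $N$ a fixed exponent. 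Since $\rho\mapsto\Psi(\rho)$ is bounded on $[\tfrac34,\tfrac78]$, the standard iteration lemma (for functions satisfying $\Psi(\rho)\le\theta\Psi(\rho')+A(\rho'-\rho)^{-N}$ with $\theta<1$) yields $\Psi(\tfrac34)\le C[\,\cdots\,]$, which is exactly \eqref{eqn:loc-en} (up to harmlessly adjusting the constant).

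The crux is precisely this chain. Unlike the classical local energy inequality, whose right side contains no derivatives of $u$, here \eqref{eqn:suit-weak-drift} produces $\overline\nabla u^*$ and $\overline\Delta_b u^*$ on the right, so one must first reorganise them by integration by parts — which forces the cut-off to be even in $y$, to kill the otherwise too singular $1/y$ weights — and then close the estimate through a Caccioppoli inequality for the degenerate operator $\overline\Delta_b$ together with a hole-filling iteration over a nested family of parabolic cylinders, since a plain absorption into the left side is not available.
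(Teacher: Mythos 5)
Your proof is correct, and its overall skeleton coincides with the paper's (test the localized energy inequality \eqref{eqn:suit-weak-drift} with a cut-off, control the weighted gradient through the Caccioppoli/interpolation inequality of Lemma \ref{lem:interpol}, and close with the Giusti-type iteration over nested radii). Where you genuinely diverge is in the treatment of the two bilinear bulk terms $\int y^b\,\overline\nabla\varphi\cdot\overline\nabla u^*\,\overline\Delta_b u^*$ and $\int y^b\,u^*\,\overline\Delta_b\varphi\,\overline\Delta_b u^*$: the paper keeps $\overline\Delta_b u^*$ and applies Young's inequality, exploiting that $\overline\nabla\varphi$ and $\overline\Delta_b\varphi$ are supported in the annular region $Q^*_{s'}\setminus Q^*_{r'}$, so the resulting $\int y^b|\overline\Delta_b u^*|^2$ contributions are bounded by the difference $h(s)-h(r)$ (hole filling), with Lemma \ref{lem:interpol} then applied on annuli; you instead integrate by parts twice so that no $\overline\Delta_b u^*$ survives, arriving at $2c_\alpha\int y^b\,\overline\nabla^2\varphi(\overline\nabla u^*,\overline\nabla u^*)-\tfrac{c_\alpha}{2}\int y^b\,\overline\Delta_b^2\varphi\,|u^*|^2$ (an identity I checked is correct, the boundary terms vanishing because $b>0$ and by \eqref{e:Neumann_2}), and then absorb the gradient via \eqref{e:interp_1} with a small parameter $\sigma\sim(\rho'-\rho)^2$, getting directly a coefficient $\tfrac12$ in front of $\Psi(\rho'')$. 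The trade-off: your route needs a cut-off with four bounded derivatives that is even in $y$ (or, as in the paper, constant in $y$ near $y=0$) so that $\overline\Delta_b\varphi$ and $\overline\Delta_b^2\varphi$ have no $1/y$ singularity, plus the extra regularity bookkeeping to justify the second integration by parts (which your mollification remark handles, since \eqref{e:interp_2} gives local $L^2(y^b)$ convergence of $\overline\nabla(u\ast\rho_\theta)^*$); the paper's route only uses two derivatives of the cut-off and a single integration by parts (already embedded in Lemma \ref{lem:interpol}), but leans on the hole-filling structure of the annular supports. Both arguments then invoke the same iteration lemma and yield \eqref{eqn:loc-en} with comparable constants.
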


We remark that the lemma will be applied twice: in the proof of Theorem~\ref{thm:eps_reg-variant}, to get strong compactness of the rescaled sequence, with $M=(u)_{Q_1}$, and $f \equiv 0$; and in Theorem~\ref{t:CKN},  with $M=0$, $L=1$, and $f=[u]^2_{B_1}$, where here and in the rest of the note we use the shorthand notation 
\begin{align}
[u]_\Omega (t) & = \mean{\Omega} u (x,t)\, dx\label{e:space_av}\\
(u)_\Gamma & = \mean{\Gamma} u(x,t)\, dx dt\, .\label{e:time_av}
\end{align} 
respectively for space and space-time averages.

The subtraction of the function $f(t)$ in the cubic term of the energy is due to the fact that in the Navier-Stokes equations the nonlinear term has a divergence structure. In order to prove the lemma we need a suitable interpolation inequality, which in fact will prove crucial in several other occasions.
We state it assuming that $u$ and $u^*$ are real-valued functions: in fact, the lemma will be applied componentwise to the velocity field. Since it will be used several times, consistenly with the notation $Q^*_r (x,t)$ we introduce also
\[
B^*_r (x) := B_r (x) \times [0,r[ \subset \RR^4_+
\]
 
\begin{lemma}\label{lem:interpol}
Let $\psi\in C^\infty_c(\RR^4)$, $\varepsilon\in(0,1)$, $r\in (0, \infty)$, $u \in H^{\alpha}(\RR^3)$ and $u^*$ be its extension given by Theorem~\ref{thm:yang}. Then the following inequalities hold for a constant $C$ depending only on $\alpha$:
\begin{equation}\label{e:interp_1}
\int_{\RR^4_+} y^b |\overline\nabla u^*|^2 \psi^2 \, dx\, dy\le \varepsilon \int_{\RR^4_+}y^b|\overline \Delta_b u^*|^2\psi^2\,dx\,dy+\frac{C}{\varepsilon}\int_{\RR^4_+}y^b|u^*|^2\left(\psi^2+|\overline\nabla\psi|^2\right)\,dx\,dy\, ,
\end{equation}
\begin{align}
\int_{B_{r}^*} y^b |\overline\nabla u^*|^2 \, dx\, dy &
\leq C\left( \int_{B_{2r}^*}  y^b |\overline\Delta_b u^*|^2 \, dx\, dy\right)^\frac 12 \left(\int_{B_{2r}^*}  y^b | u^*|^2 \, dx\, dy\right)^\frac 12
+\frac{C}{r^2} \int_{B_{2r}^*}  y^b |u^*|^2 \, dx\, dy.\label{e:interp_2}
\end{align}
\end{lemma}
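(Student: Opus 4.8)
The two inequalities are weighted-space analogues of the classical interpolation bound $\|\nabla u\|_{L^2}^2 \lesssim \|\Delta u\|_{L^2}\|u\|_{L^2}$ (or its localized version), adapted to the degenerate operator $\overline\Delta_b = y^{-b}\overline{\rm div}(y^b\overline\nabla\,\cdot\,)$. The natural route is integration by parts: for \eqref{e:interp_1}, start from $\int y^b|\overline\nabla u^*|^2\psi^2 = \int y^b \overline\nabla u^*\cdot\overline\nabla u^*\,\psi^2$ and move one gradient off the second factor, writing $\overline\nabla u^*\cdot\overline\nabla(u^*\psi^2) = \overline\nabla u^*\cdot\overline\nabla u^*\,\psi^2 + 2u^*\psi\,\overline\nabla u^*\cdot\overline\nabla\psi$. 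Then
\[
\int_{\RR^4_+} y^b\,\overline\nabla u^*\cdot\overline\nabla(u^*\psi^2)\,dx\,dy = -\int_{\RR^4_+} y^b\,(\overline\Delta_b u^*)\,u^*\psi^2\,dx\,dy,
\]
where the boundary term at $y=0$ vanishes because of the factor $y^b$ together with the Neumann-type condition \eqref{e:Neumann} (or \eqref{e:Neumann_2}), and there is no boundary term at infinity since $\psi$ is compactly supported. Rearranging,
\[
\int_{\RR^4_+} y^b|\overline\nabla u^*|^2\psi^2 = -\int_{\RR^4_+} y^b(\overline\Delta_b u^*)u^*\psi^2 - 2\int_{\RR^4_+} y^b\,u^*\psi\,\overline\nabla u^*\cdot\overline\nabla\psi.
\]
Now I would bound the first term on the right by $\varepsilon\int y^b|\overline\Delta_b u^*|^2\psi^2 + \tfrac{1}{4\varepsilon}\int y^b|u^*|^2\psi^2$ using Young's inequality, and the second by $\tfrac12\int y^b|\overline\nabla u^*|^2\psi^2 + C\int y^b|u^*|^2|\overline\nabla\psi|^2$; absorbing the half-gradient term into the left-hand side gives \eqref{e:interp_1}.

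\textbf{Deriving the localized estimate.} For \eqref{e:interp_2} the plan is to apply \eqref{e:interp_1} with a suitable cutoff. I would pick $\psi\in C^\infty_c(\RR^4)$ with $\psi\equiv 1$ on $B_r^* \subset B_r\times[0,r)$, $\psi$ supported in (a neighborhood of) $B_{2r}^*$, and $|\overline\nabla\psi|\le C/r$; one must be slightly careful that $\psi$ is even in $y$ across $\{y=0\}$ (equivalently $\partial_y\psi(\cdot,0,\cdot)=0$) so that the integration by parts leading to \eqref{e:interp_1} is legitimate — this is the reason the statement is phrased with $\psi\in C^\infty_c(\RR^4)$ rather than $C^\infty_c(\RR^4_+)$. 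With this choice \eqref{e:interp_1} yields, for any $\varepsilon\in(0,1)$,
\[
\int_{B_r^*} y^b|\overline\nabla u^*|^2 \le \varepsilon\int_{B_{2r}^*} y^b|\overline\Delta_b u^*|^2 + \frac{C}{\varepsilon}\Big(1 + \frac{1}{r^2}\Big)\int_{B_{2r}^*} y^b|u^*|^2.
\]
Then I distinguish two cases. If $\int_{B_{2r}^*}y^b|\overline\Delta_b u^*|^2 \le \tfrac{1}{r^2}\int_{B_{2r}^*}y^b|u^*|^2$, the desired bound follows directly (the geometric-mean term dominates or is comparable to the $r^{-2}$ term). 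Otherwise, choosing $\varepsilon$ to be the ratio $\big(\int_{B_{2r}^*}y^b|u^*|^2 \big/ r^2\int_{B_{2r}^*}y^b|\overline\Delta_b u^*|^2\big)^{1/2}\wedge 1$ optimizes the right-hand side and produces exactly the product $\big(\int y^b|\overline\Delta_b u^*|^2\big)^{1/2}\big(\int y^b|u^*|^2\big)^{1/2}$ plus the $r^{-2}\int y^b|u^*|^2$ error. Collecting the cases gives \eqref{e:interp_2} with a constant depending only on $\alpha$ (through $b=3-2\alpha$); in fact the constant can be taken locally uniform in $\alpha$.

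\textbf{Main obstacle.} The only genuinely delicate point is the justification of the integration by parts in the weighted half-space: one must check that the boundary integral $\int_{\RR^3} \big(y^b\,\partial_y u^*\, u^*\psi^2\big)\big|_{y=0}\,dx$ vanishes. Since $u\in H^\alpha$ with $\alpha\in(1,2)$, one has $b=3-2\alpha\in(-1,1)$ and \eqref{e:Neumann} gives $y^{1-\alpha}\partial_y u^*\to 0$, hence $y^b\partial_y u^* = y^{b-(1-\alpha)}\cdot y^{1-\alpha}\partial_y u^* = y^{2-\alpha}\cdot o(1)\to 0$ as $y\to 0$ (using $2-\alpha>0$, cf. Remark \ref{r:a_b}), while $u^*(\cdot,y)\to u$ stays bounded in the appropriate trace sense; so the boundary term is zero. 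For $u$ merely in $H^\alpha$ one should argue by the density/mollification device already used elsewhere in the paper: prove the identity for smooth $u$ where Remark \ref{r:lisce} makes \eqref{e:Neumann} pointwise, then pass to the limit using $u^*\ast\rho_\theta\to u^*$ and $\overline\Delta_b u^*\ast\rho_\theta\to\overline\Delta_b u^*$ in $L^2(\RR^4_+,y^b)$ (and correspondingly for $\overline\nabla u^*$, whose finiteness is itself guaranteed by \eqref{e:interp_1} once established for smooth functions). Everything else is Young's inequality and a bookkeeping optimization over $\varepsilon$.
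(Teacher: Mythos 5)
Your proof of \eqref{e:interp_1} is essentially the paper's: the same integration by parts against $y^b\,\overline\nabla(u^*\psi^2)$, the boundary term at $\{y=0\}$ killed by \eqref{e:Neumann_2} (first for smooth $u$, then by mollification with $(u\ast\rho_\theta)^*=u^*\ast\rho_\theta$), then Cauchy--Schwarz and Young with absorption of the half-gradient term. That part is fine.

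The passage to \eqref{e:interp_2}, however, contains a genuine gap: \eqref{e:interp_2} does \emph{not} follow from \eqref{e:interp_1} by choosing or optimizing $\varepsilon$. Set $A:=\int_{B_{2r}^*}y^b|\overline\Delta_b u^*|^2$ and $B:=\int_{B_{2r}^*}y^b|u^*|^2$. With your cutoff, \eqref{e:interp_1} gives at best $\varepsilon A+\frac{C}{\varepsilon}\big(1+r^{-2}\big)B$, and the term $\frac{C}{\varepsilon}B$ (coming from the $\psi^2$ part of \eqref{e:interp_1}, with no $r^{-2}$ gain) cannot be dominated by $C(AB)^{1/2}+Cr^{-2}B$ uniformly in $r$ and in the relative size of $A,B$: in your first case $A\le r^{-2}B$ with $r$ large, the right-hand side of \eqref{e:interp_2} is $O(r^{-1}B)$ while your bound is at best $O(B)$; in the second case the choice $\varepsilon=(B/(r^2A))^{1/2}$ produces $\frac{C}{\varepsilon}B=C\,r\,(AB)^{1/2}$, again not $\lesssim (AB)^{1/2}$ for large $r$, and even the balanced terms are of size $r^{-1}(AB)^{1/2}$, which exceeds $(AB)^{1/2}$ when $r<1$. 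The point is that \eqref{e:interp_1} is lossy: the final Young step destroys the product structure that \eqref{e:interp_2} requires. The fix is immediate and is what the paper does (``the same arguments with a cutoff between $B_r^*$ and $B_{2r}^*$''): stop at the intermediate estimate obtained after Cauchy--Schwarz on the $\overline\Delta_b$-term and absorption of the cross term, namely
\begin{equation}
\int_{\RR^4_+} y^b|\overline\nabla u^*|^2\psi^2
\le 2\left(\int_{\RR^4_+}y^b|\overline\Delta_b u^*|^2\psi^2\right)^{\frac12}\left(\int_{\RR^4_+}y^b|u^*|^2\psi^2\right)^{\frac12}
+ C\int_{\RR^4_+}y^b|u^*|^2|\overline\nabla\psi|^2\,,
\end{equation}
which, for a cutoff $\psi$ between $B_r^*$ and $B_{2r}^*$ with $|\overline\nabla\psi|\le C/r$, is exactly \eqref{e:interp_2}; \eqref{e:interp_1} is then a consequence of this display by one further Young inequality, not the other way around.
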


\begin{proof}
We first assume in addition that $u\in C^\infty$. Remember that $\overline\Delta_b u^*=y^{-b}\overline{\rm div}(y^b\overline\nabla u^*)$, thus
\begin{equation}
\label{eqn:calcolo}
\begin{split}
 \overline\div(y^b u^*\psi^2 \overline\nabla u^* ) &=  
 \overline\div(y^b  \overline\nabla u^*) u^*\psi^2 +
 y^b |\overline\nabla u^*|^2 \psi^2 
 + 2y^b \psi u^* \overline\nabla \psi \cdot \overline\nabla u^*
 \\
 &=  
 y^b \overline\Delta_b u^* u^*\psi^2 +
 y^b |\overline\nabla u^*|^2 \psi^2 
 + 2y^b \psi u^* \overline\nabla \psi \cdot \overline\nabla u^* .
\end{split}
\end{equation}
Integrating by parts the boundary term vanishes thanks to $\lim_{y\to 0}y^b\partial_y u^*(x,y)=0$, thus
\begin{equation}\label{e:interp_ibp}
\int_{\RR^4_+} y^b|\overline\nabla u^*|^2\psi^2 =-\int_{\RR^4_+} y^b u^* \overline\Delta_b u^*\psi^2-2\int_{\RR^4_+} y^b\psi u^*\overline \nabla u^*\cdot\overline\nabla\psi.
\end{equation}
More precisely, to obtain the previous estimate we integrate in $\{y \geq \eps\}$ and then we let $\eps \to 0$; the boundary term satisfies
$$\liminf_{y \to0 } \left|\int_{\RR^3} y^b u^*\psi^2 \partial_y u^*\, dx\right|^2 \leq 
\liminf_{y \to0 } \left(\int_{\RR^3} y^b |u^*|^2\psi^2 \, dx\right) \cdot \left(\int_{\RR^3} y^b \psi^2 |\partial_y u^*|^2\, dx\right)= 0
$$
where the last equality follows from the $L^2$ estimate on $u^*$ of Lemma~\ref{lem:withtails}, which shows that the first factor is bounded for a sequence of $y$ going to $0$, and to the fact that $\lim_{y\to 0}y^b\partial_y u^*(x,y)=0$ uniformly, thanks to the smoothness of $u$ and to \eqref{e:Neumann_2}.
By H\"older inequality, we estimate the first summand in the right hand side of \eqref{e:interp_ibp}
\[
-\int_{\RR^4_+} y^b u^* \overline\Delta_b u^*\psi^2 \le \left(\int_{\RR^4_+}y^b|\overline\Delta_bu^*|^2\psi^2\right)^{\frac 12}\left(\int_{\RR^4_+} y^b|u^*|^2\psi^2\right)^{\frac 12}.
\]
We use Young's inequality to the second summand of \eqref{e:interp_ibp}, thus
\[
\int_{\RR^4_+} y^b\psi u^*\overline \nabla u^*\overline\nabla\psi\le C\int_{\RR^4_+} y^b|u^*|^2|\overline\nabla\psi|^2+\frac 14\int_{\RR^4_+} y^b\psi^2|\overline\nabla u^*|^2.
\]
The second summand of the latter inequality an be reabsorbed in the left hand side of \eqref{e:interp_ibp}. Putting together the last three displayed inequalities and using Young's inequality a second time, we obtain \eqref{e:interp_1}. Applying the same arguments with  a cutoff function $\psi$ between $B_r^*$ and $B_{2r}^*$, we obtain \eqref{e:interp_2}.

This concludes the proof in the case that $u$ is smooth; if $u \in H^\alpha(\RR^3)$ is not assumed to be smooth, we proceed by approximating $u$ with $u\ast \rho_\theta$, where $\{\rho_\theta\}_{\theta \in (0,1)}\subseteq C^\infty_c(\RR^3)$ is a standard family of mollifiers, and we notice that $(u\ast \rho_\theta)^*=u^*\ast \rho_\theta$. To take the limit in \eqref{e:interp_1} (and similarly in \eqref{e:interp_2}) we observe that all the terms in the right hand side pass to the limit because $u^*\ast \rho_\theta \to u^*$, $\overline \Delta_b u^*\ast \rho_\theta \to \overline \Delta_b u^*$ in $L^2(\RR^4_+, y^b)$, while for the left hand side we employ a lower-semicontinuity argument.
\end{proof}

\begin{proof}[Proof of Lemma \ref{lemma:suitable}]
	 Let $r,s \in (3/4,1)$, $r<s$, $r' = (2r+s)/3$, $s'= (r+2s)/3$ and let us consider a cutoff $\varphi$ supported in $Q^*_{s'}$, which is identically $1$ on $Q^*_{r'}$ and such that 
	 \begin{align*}
	 |\partial_t \varphi| + |\partial_y\varphi| + |\overline\nabla \varphi| &\leq \frac{C}{r-s}\,,\qquad  |\overline\nabla^2 \varphi| \leq \frac{C}{(r-s)^{2}}
	  \end{align*}
for some universal constant $C$. Moreover, we assume that $\varphi$ is constant in the variable $y$, namely $\partial_y\varphi=0$, on the domain $\{y<\frac 12\}$. In particular we conclude that $|\overline\Delta_b\varphi|\le C(r-s)^{-2}$.
Let us consider
	$$
	h(r):= \sup_{t\in [-r^{2\alpha},0]}\int_{B_r} \frac{|u(x,t)|^2}{2}\, dx + c_{\alpha}\int_{Q_r^*} y^b| \overline\Delta_b u^*|^2 \, dx \,dy\,dt'.
	$$ 
	By the energy inequality \eqref{eqn:suit-weak-drift} we have that for every $t\in [-r^{2\alpha},0]$
	\begin{align}
	& \hphantom{\le -}\int_{B_r} |u(\cdot,t)|^2 \, dx +2c_{\alpha} \int_{-r^{2\alpha}}^t \int_{B_r^*} y^b|\overline\Delta_b u^*|^2 \, dx \, dy\, dt'\nonumber\\
	& \le\int_{B_s} \varphi(\cdot,0,t) |u(\cdot,t)|^2 \, dx +2c_\alpha \int_{-s^{2\alpha}}^t \int_{\RR^4_+} y^b|\overline\Delta_b u^*|^2 \varphi \, dx \, dy\, dt'\nonumber\\ 
	& \le \hphantom{-}\int_{-s^{2\alpha}}^t \int_{\RR^3} \Big[ |u|^2 \partial_t \varphi + (L| u |^2 +2p) u \cdot \nabla \varphi \Big] \, dx \, dt'+\int_{-s^{2\alpha}}^t\int_{B_s}M\cdot\nabla\varphi|u|^2\,dx\,dt'\nonumber\\  
	&\hphantom{\le} -c_{\alpha}\int_{-s^{2\alpha}}^t\int_{\RR^4_+}y^b\left(2\overline\nabla\varphi\overline \nabla u^* \overline\Delta_b u^*+ u^* \overline\Delta_b\varphi\overline\Delta_b u^*\right) \, dx \, dy \, dt'\,.\label{eqn:suit-weak-hp2}
	\end{align}
	Regarding the non-quadratic term in the right hand side, we notice that, since $u$ is divergence-free, for any $f\in L^1([-1,0])$,
	$$
	\int_{-s^{2\alpha}}^t \int_{\RR^3} \Big[ (L| u |^2 +2p) u \cdot \nabla \varphi \Big] \, dx \, dt' =\int_{-s^{2\alpha}}^t \int_{\RR^3} \Big[ (L| u |^2 -f(t)+2p) u \cdot \nabla \varphi \Big] \, dx \, dt'.$$
	
\noindent Taking the supremum as $t$ varies in $[-r^{2\alpha},0]$ in \eqref{eqn:suit-weak-hp2} and using the bounds on the derivatives of $\varphi$, together with the fact that they vanish inside $Q_{r'}$ and outside $Q_{s'}$, we deduce that
	\begin{align}
	h(r) \leq & \hphantom{+}C
	\int_{Q_{s'}\setminus Q_{r'}} (1+|M|) \frac{|u|^2}{s-r} + \frac{|L| u |^2 -f(t)+2p | |u| }{s-r}\,dx\,dt \nonumber\\
	& + C \int_{Q_{s'}^*\setminus Q_{r'}^*}\frac{y^b|\overline\Delta_b u^*| |u^*| }{(s-r)^2}
	+ \frac{y^b|\overline\nabla u^*||\overline\Delta_b u^*|}{s-r} \, dx \,dy\, dt'.\label{eqn:suit-est}
	\end{align}
	We estimate each term in the right hand side of \eqref{eqn:suit-est}.
	For the first term, we notice that
	\begin{align*}
	\int_{Q_{s'}\setminus Q_{r'}}(1+|M|) \frac{|u|^2}{s-r}  \, dx \, dt' & \leq \frac{(1+|M|)}{s-r} \int_{Q_{1}}|u|^2   \, dx \, dt'\,.%\\ 
	%\leq \frac{(1+|M|)}{s-r} \Big( \int_{Q_{1}}|u|^3 \, dx \, ds \Big)^{2/3}
%	\int_{Q_{s'}\setminus Q_{r'}} \frac{(| u |^2 +2|p|) |u| }{s-r} \, dx \, dt'& \leq \frac{1}{s-r}\int_{Q_{1}}(| u |^3 +|p|^{3/2})  \, dx \, dt'.
\end{align*}
	For the third term, we apply Young inequality to deduce that
	\begin{align*}
	\int_{Q_{s'}^*\setminus Q_{r'}^*}  \frac{y^b|\overline\Delta_b u^*| |u^*| }{(s-r)^2}
	\, dx \, dy\, dt' &\leq \int_{Q_{s'}^*\setminus Q_{r'}^*} y^b|\overline\Delta_b u^*|^2  \, dx \, dy\,dt' + \frac{1 }{(s-r)^4}
	\int_{Q_{s'}^*\setminus Q_{r'}^*} y^b|u^*|^2 \, dx \, dy\, dt'\\
	& \le (h(s)-h(r)) +\frac{1 }{(s-r)^4}
	\int_{Q_{1}^*} y^b|u^*|^2 \, dx \, dy\, dt'.
	\end{align*}
	Finally, for the last term we first use Young inequality to infer
	\begin{align*}
	\int_{Q_{s'}^*\setminus Q_{r'}^*}y^b\overline\nabla\varphi\overline\nabla u^*\overline\Delta_b u^* \, dx \,dy \, dt'& \le \int_{Q_{s'}^*\setminus Q_{r'}^*} y^b|\overline\Delta_b u^*|^2\,dx\,dy\,dt' +\frac{C}{(s-r)^2}\int_{Q_{s'}^*\setminus Q_{r'}^*} y^b|\overline\nabla u^*|^2\,dx\,dy\,dt'\\
	& \le (h(s)-h(r))+\frac{C}{(s-r)^2}\int_{Q_{s'}^*\setminus Q_{r'}^*} y^b|\overline\nabla u^*|^2\,dx\,dy\,dt'.
	\end{align*} 
	In order to estimate the last integral, we decompose the domain of integration as 
\begin{equation}\label{e:decomp_1}
Q_{s'}^*\setminus Q_{r'}^* = \big( (B_{s'}^* \setminus B_{r'}^*)  \times (-(r')^{2\alpha},0] \big) \cup \big( B_{s'}^* \times (-(s')^{2\alpha},-(r')^{2\alpha}] \big) .
\end{equation}
	Next, we apply Lemma \ref{lem:interpol} twice: consider at first the case $t\in (-(r')^{2\alpha},0]$ and a cutoff function $\psi\in C^\infty_c(\RR^4)$ in the variables $x$ and $y$, with the following properties: ${\rm supp}\,\psi\cap\RR^4_+\subset B_s^*\setminus B_r^*$, $\psi\equiv 1$ in $B_{s'}^*\setminus B_{r'}^*$ and $|\overline\nabla\psi|\le\frac{C}{r-s}$.
	Thus, for each time $t\in (-r^{2\alpha},0]$ we have
	\begin{equation}\label{leones}
\frac{1}{(s-r)^2}\int_{B_{s'}^*\setminus B_{r'}^*} \!\!\!\!\!\! y^b |\overline\nabla u^*|^2\,dx\,dy\le\int_{B_s^*\setminus  B_{r}^*}  \!\!\!\!\!\! y^b|\overline\Delta_b u^*|^2	\,dx\,dy+\frac{C}{(s-r)^4}\int_{B_1^*} \!\!\! y^b|u^*|^2\,dx\,dy.
	\end{equation}
	Next, at each fixed time $t\in (-(s')^{2\alpha},-(r')^{2\alpha})$ apply Lemma \ref{lem:interpol} with  $\varepsilon=(s-r)^{-2}$ and a new cutoff function $\psi\in C^\infty_c(\RR^4)$ in the variables $x$ and $y$, with the following properties: ${\rm supp}\,\psi\cap\RR^4_+\subset B_s^*$, $\psi\equiv 1$ in $B_{s'}^*$ and $|\overline\nabla\psi|\le\frac{C}{r-s}$.
	Thus, for each time $t\in (-(s')^{2\alpha},-(r')^{2\alpha})$ we have
	\begin{equation}\label{leones2}
\frac{1}{(s-r)^2}\int_{B_{s'}^*} y^b |\overline\nabla u^*|^2\,dx\,dy\le\int_{B_s^*} y^b|\overline\Delta_b u^*|^2	\,dx\,dy+\frac{C}{(s-r)^4}\int_{B_1^*} y^b|u^*|^2\,dx\,dy.
	\end{equation}
We integrate in time \eqref{leones} for $t\in (-(r')^{2\alpha},0]$ and \eqref{leones2} for $t\in (-(s')^{2\alpha},-(r')^{2\alpha})$ and we sum the two inequalities. We then use \eqref{e:decomp_1} and
the inclusion
$$\big(  (B_{s}^* \setminus B_{r}^*) \times (-(r')^{2\alpha},0] \big) \cup \big( B_{s}^* \times (-(s')^{2\alpha},-(r')^{2\alpha}]\big) 
\subseteq Q_{s}^*\setminus Q_{r}^*.
$$
Summing the corresponding contributions we get
	\begin{align*}
	\frac{1}{(s-r)^2}\int_{Q^*_{s'}\setminus Q^*_{r'}} y^b |\overline\nabla u^*|^2\,dx\,dy\,dt'
	&\le \int_{ Q^*_{s}\setminus Q^*_{r}} y^b |\overline\nabla u^*|^2\,dx\,dy\,dt'
	+\frac{C}{(s-r)^4}\int_{Q^*_1} y^b|u^*|^2\,dx\,dy\,dt'
	\\
	&\le(h(s)-h(r))+\frac{C}{(s-r)^4}\int_{Q^*_1} y^b|u^*|^2\,dx\,dy\,dt'.
	\end{align*}
	
\noindent Putting together the previous estimates in \eqref{eqn:suit-est}, we obtain that for every $3/4 < r<s<1$
	\begin{multline*}
	h(r) \leq C_* (h(s)-h(r)) + C (1+|M|) \frac{1 }{s-r}
	\int_{Q_{1}} |u|^2 \, dx \, dt' +  \frac{1}{s-r}\int_{Q_{1}}\big|L| u |^2 -f(t)+2p \big| |u|   \, dx \, dt'\\
	+\frac{C}{(s-r)^4}\int_{Q^*_1} y^b|u^*|^2\,dx\,dy\,dt'
	\end{multline*}
	for universal constants $C_*$ and $C$.
	Hence, we apply Lemma 6.1 of \cite{Giusti} to conclude that 
	$$h(3/4) \leq C  (1+|M|)
	\int_{Q_{1}} |u|^2 \, dx \, dt' + C\int_{Q_{1}}|L| u |^2 -f(t)+2p | |u|   \, dx \, dt'+C\int_{Q^*_1} y^b|u^*|^2\,dx\,dy\,dt',$$
	which proves \eqref{eqn:loc-en}.
\end{proof}

We close the section
with the following lemma where we estimate the nonlocal term in the right hand side of \eqref{eqn:loc-en} in terms of the $L^2$-norm of $u$ and of its ``tail''.
\begin{lemma}\label{lem:withtails}
Let $\alpha\in (1,3/2]$ and $u\in L^2(B_1)\cap L^1_{\rm loc}(\RR^3)$ and assume that
\[
\sup_{R\ge 1}R^{-3\alpha}\left(\mean{B_R}|u|\,dx\right)^2 < +\infty\, .
\]
Consider the kernel $P$ of Proposition \ref{prop:poisson_k}. Then the associated extension $u^* (x,y)= (P (\cdot, y)\ast u) (x)$  belongs to $L^2_{\rm loc} (\RR^4_+, y^b)$ and satisfies the estimate
\begin{equation}\label{e:extu}
\int_{B_1^*}|u^*|^2 y^b\,dx\,dy\le C\left(\int_{B_2} |u|^2+\sup_{R\ge 1}R^{-3\alpha}\left(\mean{B_R}|u|\,dx\right)^2\right),
\end{equation}
for a geometric constant $C$ (which is, in particular, independent of $\alpha$).
\end{lemma}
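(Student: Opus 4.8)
The plan is to use the explicit representation $u^*(x,y)=(P(\cdot,y)\ast u)(x)$ furnished by Proposition~\ref{prop:poisson_k}, together with the elementary properties of $P$ recorded there, and to split the ``source'' $u$ into a part supported near the origin and a far part, estimating the two resulting contributions to $u^*$ by Young's inequality and by a pointwise kernel bound, respectively. From Proposition~\ref{prop:poisson_k} I would read off three facts about $P$, all with constants uniform in $\alpha\in(1,\tfrac32]$: the scaling identity $P(x,y)=y^{-3}\Phi(x/y)$ with $\Phi:=P(\cdot,1)$; the bound $\int_{\RR^3}|P(x,y)|\,dx\le C$ for every $y>0$ (equivalently $\Phi\in L^1$); and a polynomial tail estimate which we write schematically as $|P(x,y)|\le C\,y^{2\alpha}(|x|+y)^{-3-2\alpha}$. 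The precise form of the tail is immaterial; all that matters is that the spatial decay exponent strictly exceeds $3+\tfrac{3\alpha}{2}$, which holds for the Yang extension precisely because its ``order'' $2\alpha$ satisfies $2\alpha>\tfrac{3\alpha}{2}$.

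I would first reduce to the case $u\in L^2(B_2)$, since otherwise the right-hand side of \eqref{e:extu} is $+\infty$. Then decompose $u=u\,\mathbbm{1}_{B_2}+u\,\mathbbm{1}_{\RR^3\setminus B_2}$, so that by linearity of convolution $u^*=A+B$ with $A(x,y):=\big(P(\cdot,y)\ast(u\,\mathbbm{1}_{B_2})\big)(x)$ and $B(x,y):=\big(P(\cdot,y)\ast(u\,\mathbbm{1}_{\RR^3\setminus B_2})\big)(x)$; the convolution defining $B$ is absolutely convergent for each $y>0$ because $u\in L^1_{\rm loc}$ and, by the dyadic estimate below, $\int_{|z|\ge 2}|z|^{-3-2\alpha}|u(z)|\,dz<\infty$. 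For $A$, Young's convolution inequality gives $\|A(\cdot,y)\|_{L^2(\RR^3)}\le \|P(\cdot,y)\|_{L^1(\RR^3)}\,\|u\|_{L^2(B_2)}\le C\|u\|_{L^2(B_2)}$; multiplying by $y^b$ and integrating over $y\in(0,1)$, where $\int_0^1 y^b\,dy=(b+1)^{-1}\le 1$ since $b=3-2\alpha\ge 0$, yields $\int_{B_1^*}y^b|A|^2\,dx\,dy\le C\int_{B_2}|u|^2\,dx$.

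The far part is where the tail hypothesis enters. For $(x,y)\in B_1^*$ and $|z|\ge 2$ one has $|x-z|\ge |z|-1\ge\tfrac{|z|}{2}$, hence $|x-z|+y\ge\tfrac{|z|}{2}$, and, using $y<1$, the tail estimate gives $|P(x-z,y)|\le C\,|z|^{-3-2\alpha}$ uniformly on $B_1^*$. Therefore $|B(x,y)|\le C\,I$ on all of $B_1^*$, with $I:=\int_{|z|\ge 2}|z|^{-3-2\alpha}|u(z)|\,dz$, whence $\int_{B_1^*}y^b|B|^2\,dx\,dy\le C\,I^2$. To bound $I$, set $T^2:=\sup_{R\ge 1}R^{-3\alpha}\big(\mean{B_R}|u|\,dx\big)^2$, so that $\int_{B_R}|u|\,dx=|B_R|\,\mean{B_R}|u|\,dx\le C\,T\,R^{3+3\alpha/2}$ for $R\ge 1$; decomposing $\{|z|\ge 2\}$ into the dyadic shells $\{2^k\le|z|<2^{k+1}\}$, $k\ge 1$, I obtain
\[
I\le\sum_{k\ge 1}2^{-k(3+2\alpha)}\int_{B_{2^{k+1}}}|u|\,dx\le C\,T\,2^{\,3+3\alpha/2}\sum_{k\ge 1}2^{-k\alpha/2}\le C\,T\,,
\]
the last constant being geometric because $2^{3+3\alpha/2}\le 2^{21/4}$ for $\alpha\le\tfrac32$ and $\sum_{k\ge 1}2^{-k\alpha/2}\le\sum_{k\ge 1}2^{-k/2}<\infty$ for $\alpha\ge 1$. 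Adding the two contributions proves \eqref{e:extu}; the membership $u^*\in L^2_{\rm loc}(\RR^4_+,y^b)$ then follows by running the same argument after translating (and, if needed, rescaling) $u$, which only alters the constants in the hypotheses by harmless geometric factors.

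The only genuinely delicate point is the balance of exponents in the dyadic sum: the spatial average of $|u|$ is permitted to grow like $R^{3\alpha/2}$, hence $\int_{B_R}|u|$ like $R^{3+3\alpha/2}$, so the kernel tail must decay strictly faster than $|z|^{-3-3\alpha/2}$ for the series to converge, and moreover the entire estimate on $P$ must be uniform in $\alpha$ for the final constant in \eqref{e:extu} to be geometric. Both are supplied by Proposition~\ref{prop:poisson_k}; everything else reduces to Young's inequality and a convergent geometric series.
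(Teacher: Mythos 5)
Your proof is correct and follows essentially the same route as the paper: split $u$ into the part on $B_2$ and the far field, control the near contribution by an $L^2$ convolution bound, and control the far contribution on $B_1^*$ by the pointwise decay $P(x-z,y)\le C|z|^{-3-2\alpha}$ together with a dyadic decomposition whose series converges precisely because $3+2\alpha>3+\tfrac{3\alpha}{2}$. The only cosmetic difference is that you estimate the near part via Young's inequality using $\|P(\cdot,y)\|_{L^1}=\|P(\cdot,1)\|_{L^1}$, whereas the paper uses Plancherel and the uniform bound on $\widehat{P}(\cdot,y)$ — two interchangeable versions of the same estimate.
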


\begin{proof}
We set $u_1:={\mathbbm 1}_{B_2} u$ and $u_i:=\left({\mathbbm 1}_{B_{2^{i+1}}}-{\mathbbm 1}_{B_{2^{i}}}\right)u$ for $i>1$.
Thanks to the computation of the Poisson kernel $P(x,y)$ of $\overline\Delta_b^2$ in Proposition \ref{prop:poisson_k} and to \eqref{e:rappresentazione} we can write
\[
u^*(x,y)=u\ast P(\cdot,y)=\sum_{i=1}^\infty u_i\ast P(\cdot, y)=\sum_{i=1}^\infty u_i^*\,,
\]
where $P(x,y)={y^{2\alpha}}{(|x|^2+y^2)^{-\frac{3+2\alpha}{2}}}$.
Thus
\begin{equation}\label{e:shells}
\int_{B_1^*}|u^*|^2y^{b}\,dx\,dy\le 2\left(\int_{B_1^*}|u_1^*|^2 y^b \,dx\,dy+\int_0^1y^b\left(\sum_{i>1} \|u_i^*(\cdot, y)\|_{L^\infty(B_1)}\right)^2\,dy\right).
\end{equation}
To estimate the first term in the right hand side of \eqref{e:shells}, we observe that $P(x,y)=y^{-3}P(y^{-1}x,1)$, thus
\[
\widehat P(\xi,y)= y^{-3}\widehat{P\left(y^{-1}\cdot,1\right)}(\xi)=\widehat{P(\cdot,1)}(\xi y)\le\|\widehat{P(\cdot,1)}\|_{L^\infty}<\infty\,.
\]
Combining the latter with Plancherel's identity we achieve
\begin{align*}
\int_{B_1^*}|u_1^*|^2y^b\,dx\,dy & \le \int_{\RR^3\times[0,1]} |u_1^*|^2y^b\,dx\,dy =\int_{\RR^3\times[0,1]}|\widehat{u_1^*}|^2y^b\,d\xi\,dy =\int_{\RR^3\times[0,1]}|\widehat u_1(\xi)\widehat{P}(\xi,y)|^2y^{-1-2\alpha}\,d\xi\,dy\\
& \le \|\widehat{P(\cdot,1)}\|^2_{L^\infty}\int_{\RR^3}|\widehat{u}_1(\xi)|^2\,d\xi\int_0^1 y^{b}\,dy \le C\int_{\RR^3}|u_1|^2\,dx=C\int_{B_2}|u_1|^2\,.
\end{align*}
Moreover, for $i>1$,
\[
u_i^* (x,y) =( u_i\ast P(\cdot,y)) (x)=\int_{\RR^3}u_i(z)P(x-z,y)\,dz=\int_{B_{2^{i+1}}\setminus B_{2^{i}}} u(z)P(x-z,y)\,dz\,.
\]
When $z\in B_{2^{i+1}}\setminus B_{2^i}$ we have that $|x-z|>2^{i-1}$ and $P(x-z,y)\le P(2^{i-1},y)$, thus
\[
\|u_i^*(\cdot, y)\|_{L^{\infty}(B_1)}\le P(2^{i-1},y)\int_{B_{2^{i+1}}\setminus B_{2^i}}|u|\le\int_{B_{2^{i+1}}\setminus B_{2^{i}}} \frac{|u|y^{2\alpha}}{2^{(3+2\alpha)(i-1)}}\,.
\]
Adding over $i$, we obtain
\begin{align*}
\sum_{i>1}\|u_i^*(\cdot,y)\|_{L^\infty(B_1)} & \le \sum_{i>1}\int_{B_{2^{i+1}}\setminus B_{2^{i}}}\frac{|u|y^{2\alpha}}{2^{(3+2\alpha)(i-1)}}\le\sum_{i>1}\frac{1}{2^{\frac\alpha 2(i-1)}}\int_{B_{2^{i+1}}\setminus B_{2^{i}}} \frac{|u|y^{2\alpha}}{2^{\left(3+\frac 32\alpha\right)(i-1)}}\\
& \le \left(\sum_{i>1}\frac{y^{2\alpha}}{2^{\frac \alpha 2(i-1)}}\right)\sup_{R\ge 1} \int_{B_R}\frac{|u|}{R^{3+\frac 32\alpha}}\le Cy^{2\alpha}\sup_{R\ge 1} \int_{B_R}\frac{|u|}{R^{3+\frac 32\alpha}} \,.
\end{align*}
Replacing the right hand side of \eqref{e:shells} with the latter estimates, we conclude \eqref{e:extu}.
\end{proof}

\section{Compactness of suitable weak solutions}\label{s:compactness}

We now use the energy estimates of the previous section to prove a compactness statement for suitable rescalings and normalizations of solutions. A crucial assumption of such compactness lemma is that we have an appropriate control of some local quantities. For this reason we introduce a notion of ``excess'' (for which in the next section we will prove a suitable decay property).

\begin{definition}{\rm 
We define the excess as $E (u,p;x,t,r)= E^V (u;x,t,r) + E^P (p;x,t,r) + E^{nl} (u;x,t,r)$ where
\begin{align}
E^V(u;x,t,r) &:= \left(\mean{Q_r (x,t)}|u-(u)_{Q_r(x,t)}|^3\right)^\frac 13\\ 
E^P(p;x,t,r) &:= r^{2\alpha-1}\left(\mean{Q_r (x,t)} |p-[p]_{B_r (x)}|^\frac 32\right)^\frac 23\\
E^{nl} (u;x,t,r) &:= \left(\mean{-r^{2\alpha}}^0 \sup_{R\ge\fueta r}\left(\frac{r}{R}\right)^{3\alpha} \mean{B_R (x)}|u- (u)_{Q_r (x,t)}|^2\right)^\frac 12\, .
\end{align}
}\end{definition}

We observe that, if $u$ is a solution of \eqref{e:NS_alfa}, then
\begin{equation}\label{e:scaling}
u_r(x,t) :=r^{2\alpha-1}u(rx,r^{2\alpha}t), \qquad
p_r(x,t)  := r^{4\alpha-2}p(rx, r^{2\alpha}t) 
\end{equation}
is a solution, too. The rescaling of the excess is given by
\begin{equation}\label{e:scaling2}
E(u_r,p_r;0,0,1)=r^{2\alpha-1}E(u,p;0,0,r).
\end{equation}
We are now ready to state our compactness property. 

\begin{lemma}\label{lem:lions}
Let $(u_k,p_k)$ be a sequence of suitable weak solutions of the Navier-Stokes system \eqref{e:NS_alfa} in $\RR^3 \times [-1,0]$, with $(u_k)_{Q_1}\to M_0$ and $E(u_k,p_k;0,0,1)\to 0$. Define the rescalings 
\begin{equation*}
v_k :=\frac{u_k-(u_k)_{Q_1}}{E(u_k,p_k;0,0,1)}\qquad q_k :=\frac{p_k-[p_k]_{B_1}}{E(u_k,p_k;0,0,1)}\,.
\end{equation*}
Up to subsequences $(u_k, p_k)$ converges (in the sense of distributions) to a
pair $(v,q)$ defined on $\RR^3\times[-1,0]$ which solves the following system in $Q_\frac 12$
\begin{equation}\label{e:linNS_1}
\left\{
\begin{array}{l}
\partial_t v+\nabla q+ M_0\cdot\nabla v=-(-\Delta)^\alpha v \\ \\
{\rm div}\, v=0\, 
\end{array}
\right.
\end{equation}
and satisfies
\begin{equation}\label{e:ci_vuole!}
E (v;0,0,1) \leq 1\,.
\end{equation}
Moreover, $v_k\to v$ strongly in $L^3\big(Q_\frac 12,\RR^3\big)$ and $q_k\to q$ strongly in $L^{\frac 32}\big(Q_\frac 12\big)$.
\end{lemma}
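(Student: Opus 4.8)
The plan is to combine the localized energy estimate of Lemma~\ref{lemma:suitable}, applied to the rescaled solutions, with an Aubin--Lions--Simon compactness argument, using throughout the tail part $E^{nl}$ of the excess to tame the nonlocal operator $(-\Delta)^\alpha$. First I observe that, by Lemma~\ref{lemma:drift}, each $v_k$ is a suitable weak solution of \eqref{e:sws-const} with drift $M=M_k:=(u_k)_{Q_1}\to M_0$, constant $L=E_k:=E(u_k,p_k;0,0,1)\to 0$, and pressure $q_k$. Since $(v_k)_{Q_1}=0$ and $[q_k]_{B_1}=0$, the normalization gives exactly $E(v_k,q_k;0,0,1)=1$; in particular $v_k$ is bounded in $L^3(Q_1)$, $q_k$ is bounded in $L^{3/2}(Q_1)$, and, from $E^{nl}(v_k;0,0,1)\le 1$, one has $\int_{-1}^0\textmean{B_R}|v_k|^2\,dx\,dt\le R^{3\alpha}$ for every $R\ge\fueta$. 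This last ``tail'' bound is the substitute for the global $L^2$ control that is lost when one subtracts the constant $M_k$, and by a diagonal extraction it also furnishes a weak $L^2_{\rm loc}(\RR^3\times(-1,0))$ limit $v$ which is globally defined in space and inherits the bound.

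Next I would feed the above into the energy estimate. Applying Lemma~\ref{lemma:suitable} to $v_k$ with $f\equiv 0$, and using Lemma~\ref{lem:withtails} to control the nonlocal term $\int_{Q_1^*}y^b|v_k^*|^2$ by the $L^2$-norm and the tail of $v_k$, yields that $v_k$ is bounded in $L^\infty((-(3/4)^{2\alpha},0);L^2(B_{3/4}))$ together with $\int_{Q^*_{3/4}}y^b|\overline\Delta_b v_k^*|^2\le C$; Lemma~\ref{lem:interpol} then also bounds $\int_{Q^*_{5/8}}y^b|\overline\nabla v_k^*|^2$. Choosing a spatial cutoff $\chi\equiv 1$ on $B_{1/2}$ with $\operatorname{supp}\chi\subset B_{5/8}$, expanding $\overline\Delta_b(\chi v_k^*)$, and invoking the minimality property of the Yang extension (Theorem~\ref{thm:yang}(c)) shows that $\chi v_k$ is bounded in $L^2((-a,0);H^\alpha(\RR^3))$ for some $a>(1/2)^{2\alpha}$. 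Interpolating with the $L^\infty_tL^2_x$ bound and using $H^\alpha(\RR^3)\hookrightarrow L^{6/(3-2\alpha)}$ (with $\tfrac{6}{3-2\alpha}>3$, since $\alpha>1$) then gives that $v_k$ is bounded in $L^{p_0}(Q_{1/2})$ for some $p_0>3$ depending only on $\alpha$ --- this is the point where the $H^\alpha$-level gain, rather than a mere $H^{\alpha-1}$-level gain, is used.

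For the time variable I would use the equation, $\partial_t(\chi v_k)=-\chi M_k\cdot\nabla v_k-\chi E_k\,\div(v_k\otimes v_k)-\chi\nabla q_k-\chi(-\Delta)^\alpha v_k$. Tested against $\psi\in C^\infty_c(\RR^3)$, the first three terms are bounded in $L^{3/2}((-a,0);H^{-2})$ by the bounds above, while for the last one writes $\langle(-\Delta)^\alpha v_k,\chi\psi\rangle=\langle v_k,(-\Delta)^\alpha(\chi\psi)\rangle$ and exploits that $(-\Delta)^\alpha(\chi\psi)$ is smooth with $|(-\Delta)^\alpha(\chi\psi)(x)|\le C\|\psi\|_{H^N}|x|^{-3-2\alpha}$ for large $|x|$, so that the tail bound makes it bounded in $L^1((-a,0);H^{-N})$. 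Since $\chi v_k$ is supported in the fixed ball $B_{5/8}$ and bounded in $L^2_tH^\alpha_x$, with $H^\alpha(B_{5/8})$ compactly embedded in $L^2(B_{5/8})$, the Aubin--Lions--Simon lemma gives $\chi v_k\to\chi v$ strongly in $L^2((-a,0);L^2(B_{5/8}))$; combined with the $L^{p_0}(Q_{1/2})$ bound ($p_0>3$) and Vitali's theorem this upgrades to $v_k\to v$ strongly in $L^3(Q_{1/2})$. For the pressure, taking the divergence of \eqref{e:sws-const} and using $\div v_k=0$ gives $-\Delta q_k=E_k\,\div\div(v_k\otimes v_k)$ on $\RR^3\times(-1,0)$ (the terms linear in $v_k$ drop out); splitting $q_k=E_k\,R_iR_j(\chi^2 v_k^iv_k^j)+\tilde q_k$ with Riesz transforms $R_j$ and $\chi\equiv 1$ on $B_{3/4}$, the first summand tends to $0$ strongly in $L^{3/2}(\RR^3)$ (as $E_k\to 0$ and $v_k$ is bounded in $L^3(Q_1)$), while $\tilde q_k$ is harmonic in $B_{3/4}$ for a.e.\ $t$ and bounded in $L^{3/2}$, hence compact in $L^{3/2}(Q_{1/2})$ by interior estimates for harmonic functions plus Vitali in time; thus $q_k\to q$ strongly in $L^{3/2}(Q_{1/2})$.

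It remains to pass to the limit in the weak formulation of \eqref{e:sws-const} against $\phi\in C^\infty_c(Q_{1/2};\RR^3)$: one has $E_k v_k\otimes v_k\to 0$, $M_k\cdot\nabla v_k\to M_0\cdot\nabla v$, $q_k\,\div\phi\to q\,\div\phi$, and $\langle v_k,(-\Delta)^\alpha\phi\rangle\to\langle v,(-\Delta)^\alpha\phi\rangle$ by weak $L^2_{\rm loc}$-convergence on each ball together with uniform smallness of the far tail --- giving \eqref{e:linNS_1} and $\div v=0$ in $Q_{1/2}$. Finally $E(v;0,0,1)\le 1$ follows from $(v_k)_{Q_1}=0$, the lower semicontinuity of the $L^3(Q_1)$ and $L^{3/2}(Q_1)$ norms for $E^V$ and $E^P$, a measurable selection $R_\varepsilon(t)\ge\fueta$ almost realizing the supremum in $E^{nl}(v;\cdot)$ --- the ensuing weight $|B_{R_\varepsilon(t)}|^{-1}R_\varepsilon(t)^{-3\alpha}$ being bounded precisely because $R_\varepsilon(t)\ge\fueta$, so that the weak $L^2$ lower semicontinuity of $w\mapsto\int|w|^2\,d\mu$ applies after exhausting in space --- and the superadditivity of $\liminf$ together with $E(v_k,q_k;0,0,1)=1$. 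The main difficulty, in both its aspects, is the nonlocality of $(-\Delta)^\alpha$: on one hand one needs the genuine $H^\alpha$-level interior bound, which is exactly what Lemma~\ref{lemma:suitable} delivers through $\int y^b|\overline\Delta_b u^*|^2$ and which makes $p_0>3$, hence the strong $L^3$ convergence, possible; on the other hand one must give meaning to, and pass to the limit in, $(-\Delta)^\alpha$ against compactly supported test functions and inside both the energy estimate and the excess, where the tail functional $E^{nl}$ and Lemma~\ref{lem:withtails} are indispensable.
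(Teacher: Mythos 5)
Your proposal follows essentially the same route as the paper: Lemma~\ref{lemma:drift} to view $(v_k,q_k)$ as suitable weak solutions of \eqref{e:sws-const} with $L=E(u_k,p_k;0,0,1)\to 0$ and $M=(u_k)_{Q_1}\to M_0$; Lemma~\ref{lemma:suitable} combined with Lemmas~\ref{lem:withtails} and \ref{lem:interpol} for the uniform $L^\infty_t L^2_x$ and $\int y^b|\overline\Delta_b v_k^*|^2$ bounds; a localization as in Lemma~\ref{lem:cpt_fixedt} to get an $L^2_t H^\alpha_x$ bound and hence an $L^{p_0}(Q_{1/2})$ bound with $p_0>3$; and compactness in time to upgrade weak convergence to strong $L^3(Q_{1/2})$ convergence, with the tail functional $E^{nl}\le 1$ taming $(-\Delta)^\alpha$ throughout. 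The only genuine difference in packaging is that you invoke Aubin--Lions--Simon, estimating $\partial_t(\chi v_k)$ in negative Sobolev spaces (handling the nonlocal term by duality, the decay of $(-\Delta)^\alpha(\chi\psi)$ and the tail bound), whereas the paper mollifies in space and applies Ascoli--Arzel\`a to $v_k\ast\varphi_\theta$ with values in $W^{1,\infty}(B_{1/2})$; these are interchangeable, and your treatment of the passage to the limit in \eqref{e:linNS_1} and of the lower semicontinuity of $E^{nl}$ is, if anything, more explicit than the paper's.

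The one step whose stated justification does not hold up is the strong $L^{3/2}(Q_{1/2})$ convergence of the harmonic remainder $\tilde q_k$. Your decomposition is correct: taking the divergence of \eqref{e:sws-const} does give $-\Delta q_k=E_k\,{\rm div}\,{\rm div}\,(v_k\otimes v_k)$, and the Calder\'on--Zygmund part is killed by $E_k\to 0$. But ``interior estimates for harmonic functions plus Vitali in time'' does not yield compactness of $\tilde q_k$: harmonicity in $x$ controls all spatial derivatives on $B_{1/2}$ in terms of $\|\tilde q_k(\cdot,t)\|_{L^{3/2}(B_{3/4})}$, yet gives no information in the time variable, and Vitali's theorem is inapplicable because you have no a.e.-in-$t$ (or in measure) convergence to start from --- a sequence of the form $\tilde q_k(x,t)=\sin(kt)\,h(x)$ with $h$ harmonic is bounded in $L^{3/2}_t C^\infty_{x,{\rm loc}}$ and converges only weakly. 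To close this step you must feed in time regularity coming from the equation itself, e.g.\ run for $\nabla q_k$ the same space-mollification/equicontinuity argument used for $v_k$ (exploiting that a harmonic function coincides with its mollification by a radial kernel, so controlling $\tilde q_k\ast\varphi_\theta$ controls $\tilde q_k$ on a slightly smaller ball), or an equivalent time-compactness input. To be fair, the paper is equally terse here (``can be inferred from Calder\'on--Zygmund estimates''), but as written this is the point where your argument is incomplete.
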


\begin{remark}\label{r:cosa_v_e_q}
Note that $v$ belongs to $L^2_{\rm loc}$ because of \eqref{e:ci_vuole!}, whereas $q$ is, a priori, just a distribution. Note that, although the behavior of $q$ outside $Q_1$ does not affect the equation \eqref{e:linNS_1}, the global behavior of $v$ enters in \eqref{e:linNS_1} since it affects the nonlocal operator $(-\Delta)^\alpha$. 
\end{remark}

An important ingredient of the proof is the following interpolation lemma. 

\begin{lemma}\label{lem:cpt_fixedt}
If $v\in L^2(\RR^3;\RR^3)$ then for every $r\in (0,1)$ there exists $C:=C(r)>0$ such that
\begin{equation}\label{e:bound4cpt}
\|v\|^2_{L^{\frac{6}{3-2\alpha}}(B_{r})} \leq C \Big(\int_{B_1^*}y^b|\overline\Delta_b v^*|^2+ \sup_{R\ge\fueta}R^{-3\alpha}\mean{B_R}|v|^2 \Big).
\end{equation}
\end{lemma}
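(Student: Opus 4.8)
The plan is to establish the estimate first under the extra assumption $v\in H^\alpha(\RR^3)$ and then to remove it by mollification. The reason this should work is that $6/(3-2\alpha)$ is precisely the Sobolev exponent of $\dot H^\alpha(\RR^3)$ and that, by the energy identity \eqref{e:en_of_ext}, $\int_{B_1^*}y^b|\overline\Delta_b v^*|^2$ is a localized avatar of $c_{n,\alpha}^{-1}\|v\|^2_{\dot H^\alpha}$; the real content is therefore a \emph{localized trace Sobolev inequality}, in which the large--scale remainder $\mathsf T:=\sup_{R\ge 1/4}R^{-3\alpha}\mean{B_R}|v|^2$ is the price paid to replace the local datum by a genuinely global function that coincides with $v$ on $B_r$.

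\textbf{Step 1: the case $v\in H^\alpha$.} Fix $r<r_1<\rho<1$ and set $\mathsf D:=\int_{B_\rho^*}y^b|\overline\Delta_b v^*|^2$. First, Lemma~\ref{lem:withtails} applies (its tail hypothesis is automatic since $v\in L^2$), and, absorbing $\int_{B_2}|v|^2$ into $\mathsf T$ via $R=2$ and $(\mean{B_R}|v|)^2\le\mean{B_R}|v|^2$, it gives $\int_{B_1^*}y^b|v^*|^2\le C\mathsf T$. Second, \eqref{e:interp_1} with $\varepsilon=1$ and a cutoff $\psi\in C^\infty_c(\RR^4)$ with $\psi\equiv 1$ on $B_{r_1}^*$ and ${\rm supp}\,\psi\cap\RR^4_+\subset B_\rho^*$ yields $\int_{B_{r_1}^*}y^b|\overline\nabla v^*|^2\le C(r)(\mathsf D+\mathsf T)$. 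Now pick $\zeta\in C^\infty_c(\RR^4)$ with $\zeta\equiv 1$ on $B_r\times[0,r]$, ${\rm supp}\,\zeta\subset B_{r_1}\times[0,r_1)$, and $\zeta$ independent of $y$ near $\{y=0\}$, and set $g:=\zeta\,v^*$. Using $\overline\Delta_b(\zeta v^*)=\zeta\,\overline\Delta_b v^*+2\overline\nabla\zeta\cdot\overline\nabla v^*+v^*\,\overline\Delta_b\zeta$ together with the three bounds above we get
\[
\int_{\RR^4_+}y^b|\overline\Delta_b g|^2\,dx\,dy\le C(r)(\mathsf D+\mathsf T)\,.
\]
Moreover $g\in L^2_{\rm loc}(\RR^4_+,y^b)$ is compactly supported, its trace is $g(\cdot,0)=\zeta(\cdot,0)v\in H^\alpha(\RR^3)$ (multiplication by a smooth compactly supported function preserves $H^\alpha$), and $\lim_{y\to 0}y^{1-\alpha}\partial_y g=\zeta(\cdot,0)\lim_{y\to 0}y^{1-\alpha}\partial_y v^*=0$ by \eqref{e:Neumann}, since $\zeta$ is $y$--independent near $\{y=0\}$. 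Hence $g$ is an admissible competitor in Theorem~\ref{thm:yang}(c) for $\zeta(\cdot,0)v$, so by \eqref{e:minimum_yang_ext} and \eqref{e:en_of_ext}
\[
\|\zeta(\cdot,0)v\|^2_{\dot H^\alpha(\RR^3)}=c_{n,\alpha}\int_{\RR^4_+}y^b|\overline\Delta_b(\zeta(\cdot,0)v)^*|^2\le c_{n,\alpha}\int_{\RR^4_+}y^b|\overline\Delta_b g|^2\le C(r)(\mathsf D+\mathsf T)\,.
\]
Since $\zeta(\cdot,0)\equiv 1$ on $B_r$, the Sobolev embedding $\dot H^\alpha(\RR^3)\hookrightarrow L^{6/(3-2\alpha)}(\RR^3)$ gives $\|v\|^2_{L^{6/(3-2\alpha)}(B_r)}\le C(r)(\mathsf D+\mathsf T)$, which is the claim in this case (recall $\mathsf D\le\int_{B_1^*}y^b|\overline\Delta_b v^*|^2$).

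\textbf{Step 2: the general case.} We may assume $\int_{B_1^*}y^b|\overline\Delta_b v^*|^2<\infty$. Let $v^{(\theta)}:=v\ast\rho_\theta$, with $\{\rho_\theta\}\subset C^\infty_c(\RR^3)$ a standard mollifier. Since $\widehat{\rho_\theta}$ is Schwartz, $|\xi|^\alpha|\widehat{v^{(\theta)}}|\le C_\theta|\widehat v|\in L^2$, so $v^{(\theta)}\in\dot H^\alpha\cap L^2=H^\alpha(\RR^3)$; and mollification commutes with the $x$--convolution defining the extension, whence $(v^{(\theta)})^*=v^*\ast\rho_\theta$ and $\overline\Delta_b(v^{(\theta)})^*=(\overline\Delta_b v^*)\ast\rho_\theta$. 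Applying Step~1 to $v^{(\theta)}$ with the same radii and using the convexity of $t\mapsto t^2$ (for $\theta<\min\{1-\rho,1/4\}$) to bound $\int_{B_\rho^*}y^b|\overline\Delta_b(v^{(\theta)})^*|^2\le\int_{B_1^*}y^b|\overline\Delta_b v^*|^2$ and $\sup_{R\ge 1/4}R^{-3\alpha}\mean{B_R}|v^{(\theta)}|^2\le C\mathsf T$, we obtain a bound on $\|v^{(\theta)}\|^2_{L^{6/(3-2\alpha)}(B_r)}$ uniform in $\theta$. Letting $\theta\to 0$ along a subsequence with $v^{(\theta)}\to v$ a.e. and applying Fatou's lemma on the left hand side concludes the proof.

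\textbf{Main obstacle.} The genuinely delicate point is the tension between the \emph{nonlocality} of $v^*$ and the \emph{locality} of the hypothesis: one cannot simply restrict and re-extend, and is forced to feed the global competitor $g=\zeta v^*$ into the minimality of the Yang extension, conceding the tail term $\mathsf T$ (controlled by Lemma~\ref{lem:withtails}) as the cost of the truncation. Two subsidiary technicalities must be handled with care: making $\zeta$ $y$--independent near $\{y=0\}$ so that $g$ inherits the Neumann--type condition \eqref{e:Neumann} and is an admissible competitor in Theorem~\ref{thm:yang}(c), and arranging that the mollification in Step~2 both lands in $H^\alpha$ and does not increase the right hand side.
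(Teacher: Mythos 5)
Your proof is correct and follows essentially the same route as the paper: you compare the competitor $\zeta v^*$ with the Yang extension of $\zeta(\cdot,0)v$ via the minimality \eqref{e:minimum_yang_ext}, control the commutator terms with Lemma \ref{lem:interpol} and Lemma \ref{lem:withtails}, and conclude by \eqref{e:en_of_ext} together with the Sobolev embedding $\dot H^\alpha(\RR^3)\hookrightarrow L^{6/(3-2\alpha)}(\RR^3)$, which is exactly the paper's argument with the cutoff $\varphi$. The only addition is your explicit mollification step for $v\in L^2$, which the paper leaves implicit but carries out in the same way in neighboring lemmas.
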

\begin{proof}
Fix $r\in (0,1)$ and $\varphi$ be a smooth cutoff function between $B_{r}^*$ and $B_1^*$. We assume that the cutoff function $\varphi|_{\{y<\frac 12\}}$ is independent from the variable $y$, so that $\overline\nabla\varphi=\nabla\varphi$ in the set $\{y<\frac 12\}$. We can estimate
\begin{align*}
\int_{\RR^3}|(-\Delta)^{\frac{\alpha}{2}}(v\varphi)|^2
& \stackrel{\eqref{e:en_of_ext}}{=}c_{\alpha}\int_{\RR^4_+}y^b|\overline\Delta_b(v\varphi)^*|^2\stackrel{\eqref{e:minimum_yang_ext}}{\le}c_{\alpha}\int_{\RR^4_+}y^b|\overline\Delta_b(v^*\varphi)|^2\\
& \ \le C\int_{B_1^*}y^b \big(|\overline\Delta_bv^*|^2 \varphi^2+ |\overline\nabla v^*|^2|\overline\nabla \varphi|^2+ |\overline\Delta_b\varphi|^2 |v^*|^2 \big).
\end{align*}
The second summand of the right hand side can be estimated by Lemma \ref{lem:interpol} with $\varepsilon=1/2$, $\psi=|\overline\nabla\varphi|$
$$
\int_{\RR^4_+}y^b|\overline\Delta_b(v^*\varphi)|^2\le C\int_{B_1^*}y^b \big(|\overline\Delta_bv^*|^2 + |v^*|^2 \big)\,,
$$
which in turn can be estimated with the right hand side of \eqref{e:bound4cpt} thanks to Lemma \ref{lem:withtails}. Next, by Sobolev's embedding
\[
\|v\|^2_{L^{\frac{6}{3-2\alpha}}(B_{r})}\le \|v\varphi\|^2_{L^{\frac{6}{3-2\alpha}}(\RR^3)}\le C\int_{\RR^3}|(-\Delta)^{\frac{\alpha}{2}}(v\varphi)|^2\le C \Big(\int_{B_1^*}y^b|\overline\Delta_b v^*|^2+ \sup_{R\ge\fueta}R^{-3\alpha}\mean{B_R}|v|^2 \Big).\qedhere
\]
\end{proof}

\begin{proof}[Proof of Lemma \ref{lem:lions}]
	By Lemma~\ref{lemma:drift}, the pair $(v_k,q_k)$ is a suitable weak solution of the modified Navier-Stokes equation \eqref{e:sws-const} with $L= E(u_k,p_k;0,0,1)$ and $M= (u_k)_{Q_1}$. By Lemma~\ref{lemma:suitable}, we obtain %that for any $r\in (0,1)$ 
	\begin{equation}\label{e:swe_lin}
	\sup_{t\in \left[-\left(\frac 34\right)^{2\alpha},0\right]}\int_{B_\frac 34}\frac{| v_k(\cdot,t)|^2}{2}\, dx + \int_{Q^*_\frac 34} y^b | \overline\Delta_b v_k^*|^2 \, dx\, dy \, dt \leq C\,,
	\end{equation}
	where $C$ does not depend on $k$.	
From \eqref{e:swe_lin} and Lemma \ref{lem:cpt_fixedt} we obtain that the sequence $(v_k)$ is bounded in $L^\infty \big([-\left(\frac 58\right)^{2\alpha},0],L^2(B_{\frac 58})\big)$ and in $L^2\big([-\left(\frac 58\right)^{2\alpha},0],L^{\frac{6}{3-2\alpha}}(B_{\frac 58})\big)$. By interpolation, $v_k$ is uniformly bounded in $L^{\frac{10}{3}}\big(Q_\frac 58\big)$.

Without loss of generality we can extract a subsequence (not relabeled) converging weakly to some $v$ in $L^2_{\rm loc}(\RR^3\times (-1,0])$, notice in fact that the nonlocal part of the excess controls uniformly the $L^2$-norm in $B_r\times (-1,0]$ for any $r>0$. Similarly, using the equation $\Delta p_k = {\rm div}\, {\rm div}\, (v_k\otimes v_k)$, we can assume that the sequence $\{p_k\}$ is suitably bounded in the space of tempered distributions and converges to some distribution $q$.
Note moreover that \eqref{e:ci_vuole!} follows from the lower semicontinuity of $E$. 

We claim that the sequence $v_k$ is a Cauchy sequence in $L^1\big(Q_\frac 12\big)$. Indeed let $\varepsilon>0$ be a fixed positive number and $\varphi_\theta$ be a smooth family of mollifiers in space supported in $B_1$. As shown in the proof of Lemma \ref{lem:cpt_fixedt}, the sequence $(v_k)$ is bounded in $L^2 ((-1, 0], H^\alpha (B_{5/8}))$. In particular, by the compact embedding of $H^\alpha (B_{5/8})$ in $L^2 (B_{5/8})$ and the uniform bound in $L^{\frac{10}{3}}\big(Q_\frac 58\big)$, we easily conclude
\[
\|v_k-v_k\ast\varphi_\theta\|_{L^3(Q_{1/2})}\le  f(\theta)
\]
for some function $f$ of $\theta$ which is independent of $k$ and which converges to $0$ as $\theta\downarrow 0$.
Therefore there exists $\theta>0$ such that
\begin{equation}\label{e:cauchy_conv}
\|v_k-v_k\ast\varphi_\theta\|_{L^3(Q_{1/2})}\le\frac{\varepsilon}{3}\quad \forall\,k\ge 1.
\end{equation}
We analyze the equation for $v_k\ast\varphi_\delta$:
\[
\partial_t(v_k\ast\varphi_\theta)=-\nabla p_k\ast\varphi_\theta -[(v_k\cdot\nabla)v_k]\ast\varphi_\theta-v_k\ast(-\Delta^\alpha)\varphi_\theta\,. 
\]
Observe that, using $\nabla p_k\ast\varphi_\theta=p_k\ast\nabla\varphi_\theta$ and the uniform bound in $L^{\frac 32}$ for $p_k$, we have
\[
\|\nabla p_k\ast\varphi_\theta\|_{L^{\frac 32}(]-\left(1/2\right)^{2\alpha},0]; W^{1,\infty}(B_{1/2}))}\le C(\theta)\,,
\]
where $C(\theta)$ is a constant which possibly blows up when $\theta\to 0$, but otherwise does not depend on $k$.
Using $[(v_k\cdot\nabla)v_k] \ast\varphi_\theta={\rm div}\,(v_k\otimes v_k)\ast\varphi_\theta$ we obtain
\[
\|(v_k\cdot\nabla) v_k\ast\varphi_\theta\|_{L^\frac 32(]-\left(1/2\right)^{2\alpha},0],W^{1,\infty}(B_{1/2}))}\le C(\theta).
\]
We observe that $(-\Delta)^\alpha\varphi_\theta(x)\le\frac{C(\theta)}{1+|x|^{3+2\alpha}}$ and a similar estimate holds for the gradient of $(-\Delta)^\alpha\varphi_\theta$; indeed, since $\varphi_\theta$ is smooth, for $|x|<1$ we can use the representation formula for the fractional Laplacian and an integrating by parts to compute
\[
(-\Delta)^\alpha\varphi_\theta(x)=(-\Delta)^{\alpha-1}\Delta\varphi_\theta(x)=-C\int_{\RR^3}\frac{\Delta\varphi_\theta(y)}{|x-y|^{3+2(\alpha-1)}}\,dy=C\int_{\RR^3}\frac{\varphi_\theta(y)}{|x-y|^{3+2\alpha}}\,dy\le\frac{C(\theta)}{1+|x|^{3+2\alpha}}\,.
\] 
Recalling also that $E^{nl}(v_k;1)\le 1$ and estimating the convolution on each dyadic annulus, we obtain
\[
\|v_k\ast(-\Delta)^\alpha\varphi_\theta\|_{L^{2}(]-(1/2)^{2\alpha},0],W^{1,\infty}(B_{1/2}))}\le C(\theta).
\]
The sequence of maps $t\mapsto v_k\ast \varphi_\theta(\cdot,t)$ is therefore a sequence of equicontinuous maps taking values in a closed bounded subset $X\subset W^{1,\infty}(B_{1/2})$. Endowing $X$ with the uniform metric we have a compact metric space. From Ascoli-Arzel\`a theorem the sequence is precompact. On the other hand we know it is converging weakly in $L^{\frac{10}{3}}(Q_{1/2})$ to $v\ast\varphi_\theta$, thus $v_k\ast\varphi_\theta$ is converging uniformly to $v\ast\varphi_\theta$. Hence there exists a $K\in \NN$ such that for every $j,k\ge K$ we have
\[
\|v_k\ast\varphi_\theta-v_j\ast\varphi_\theta\|_{L^1(Q_{1/2})}<\frac{\varepsilon}{3}.
\] 
This, together with \eqref{e:cauchy_conv}, completes the proof that $v_k$ is a Cauchy sequence in $L^1 (Q_{1/2})$. The strong convergence of $p_k$ can be inferred from Calder\'on-Zygmund estimates.
\end{proof}

\section{Estimates on the linearized equation}\label{s:linear}

This section is entirely devoted to prove the following linear estimate.

\begin{lemma}\label{lem:hoelder}
Let $\alpha \in (1,2)$, $\delta\ge 0$ and let {$u\in L^2_{\rm loc} (\RR^3\times [0,1] ; \RR^3)$ and $p\in L^{3/2} (Q_1, \RR)$} be solutions in $Q_1$ of the linearized system \eqref{e:linNS_1}
%\begin{equation}\label{e:linNS}
%\left\{
%\begin{array}{l}
%\partial_t u+\nabla p+ M_0\cdot\nabla u=-(-\Delta)^\alpha u \\ \\
%{\rm div} u=0
%\end{array}
%\right.
%\end{equation}
with $M_0\in\RR^3$, $|M_0|\le M$, $(u)_{Q_1}=0$, $[p]_{B_1}=0$ {and
%\begin{equation}\label{e:ci_vuole2!}
$E^{nl} (u;0,0,1) < \infty$.
%\end{equation}
} 

Then there exists a constant $\overline C=\overline C(M,\alpha)$ such that
\begin{equation}
\label{ts:linearizzato}
[u]_{C^{\frac 13}(Q_{1/2})}+[p]_{L^{\frac 32}\left(\left(-\frac 12,0\right);C^{1/2} (B_{1/2})\right)}
\le \overline C\left(\int_{Q_1}|u|^3\right)^\frac 13+\overline C\left(\int_{Q_1}|p|^\frac 32\right)^\frac 23 +{
\overline C \left(\int_{-1}^0 \sup_{R\ge\fueta }{R}^{-3\alpha} \mean{B_R (x)}|u|^2\right)^\frac 12}
%\left(\int_{Q_{1}\times[0,1]}|y|^b|u^*|^2\right)^\frac 12
.
%\le \overline C\left(\left(\int_{Q_1}|u|^3\right)^\frac 13+\left(\int_{Q_1}|p|^\frac 32\right)^\frac 23+\sup_{\tiny\begin{array}{c} -1<\tau\le 0 \\ R\ge\fueta\end{array}}\left(\frac{1}{R^{3+\fusigma}}\int_{B_R}|u|^{2}\right)^\frac{1}{1+\delta}\right),
\end{equation}
Moreover the constant $\overline C$ is independent of $\alpha$, when the latter varies in $(6/5,5/4]$.
\end{lemma}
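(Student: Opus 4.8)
The plan is to prove \eqref{ts:linearizzato} by a standard interior-regularity bootstrap for the linear nonlocal Stokes system \eqref{e:linNS_1}, treating the pressure as a known forcing in $L^{3/2}$ and treating the ``tail'' of $u$ (the contribution to $(-\Delta)^\alpha u$ coming from outside $Q_1$) as another known forcing. The key point is that once we localize, the equation for $u$ becomes a linear pseudodifferential equation with constant coefficients, for which interior estimates follow from energy estimates plus the Caffarelli--Silvestre / Yang extension machinery already developed in Section \ref{s:suitable}.

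\smallskip

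\textbf{Step 1: Localization and splitting of the nonlocal term.} Fix a cutoff $\chi\in C^\infty_c(B_{3/4})$ with $\chi\equiv 1$ on $B_{5/8}$. Write $(-\Delta)^\alpha u = (-\Delta)^\alpha(\chi u) + (-\Delta)^\alpha((1-\chi)u)$; the second piece, evaluated on $B_{1/2}$, is a smooth function whose $C^k(B_{1/2})$ norms are bounded (via the kernel representation of $(-\Delta)^\alpha$, as already done for $(-\Delta)^\alpha\varphi_\theta$ in the proof of Lemma \ref{lem:lions}) by $C\int_{-1}^0\sup_{R\ge 1/4}R^{-3\alpha}\mean{B_R}|u|^2$, i.e. exactly the last term on the right of \eqref{ts:linearizzato}. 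Thus $w:=\chi u$ solves, on $Q_{5/8}$, a system of the form $\partial_t w + M_0\cdot\nabla w + (-\Delta)^\alpha w = -\nabla q + F$ where $F\in L^{3/2}_t C^\infty_x$ with the stated bound, and $q$ has spatial norm controlled by $\|p\|_{L^{3/2}(Q_1)}$ plus commutator terms (from $\div((1-\chi)u)\ne 0$ one uses the divergence equation to solve for a corrector; all such terms are lower order and absorbed into the right-hand side of \eqref{ts:linearizzato}).

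\smallskip

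\textbf{Step 2: Energy estimate in the extension variables.} Multiply the equation for $w$ by $w$ and use the Yang extension: by \eqref{e:en_of_ext} the dissipation term produces $c_\alpha\int y^b|\overline\Delta_b w^*|^2$, which together with the drift term (which is antisymmetric after integration by parts, using $\div w\approx 0$) and the pressure term (handled by $\div w\approx 0$ again, modulo the same lower-order correctors) gives a bound
\[
\sup_t\int_{B_{1/2}}|w|^2 + \int_{Q^*_{1/2}}y^b|\overline\Delta_b w^*|^2 \le C\Big(\|u\|_{L^3(Q_1)}^2 + \|p\|_{L^{3/2}(Q_1)}^{4/3} + (\text{tail})^2\Big),
\]
using H\"older and Young on the right-hand side, exactly in the spirit of Lemma \ref{lemma:suitable} and Lemma \ref{lem:interpol}. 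Combined with Lemma \ref{lem:cpt_fixedt} this upgrades the spatial integrability of $w$ to $L^2_tL^{6/(3-2\alpha)}_x$, and by interpolation with $L^\infty_tL^2_x$ to $L^{q}_{x,t}$ for some $q>3$ (e.g.\ $q=10/3$ as in Lemma \ref{lem:lions}).

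\smallskip

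\textbf{Step 3: Bootstrap to H\"older continuity.} With $w\in L^q_{x,t}$, $q>3$, and $q\in L^{3/2}$, the right-hand side $-\nabla q+F-M_0\cdot\nabla w$ of the equation for $w$ lies in a negative-order parabolic space; we then invoke standard interior Schauder/$L^p$ theory for the constant-coefficient fractional heat operator $\partial_t+(-\Delta)^\alpha$ (available via the explicit kernel, or via the extension and De Giorgi--Nash--Moser type iteration for the degenerate equation $\overline\Delta_b^2 w^*=0$ with the $y^b$ weight) to conclude $w\in C^{0,1/3}_{x,t}(Q_{1/2})$ with the quantitative bound \eqref{ts:linearizzato}; the pressure estimate $q\in L^{3/2}_tC^{1/2}_x$ follows from Calder\'on--Zygmund applied to $-\Delta q=\div\div(\dots)$, noting the right-hand side involves only $w$ and the correctors. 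The exponent $1/3$ is dictated by the interpolation exponent $q=10/3$ (Morrey embedding: $C^{0,1-5/q}$ with the parabolic scaling, giving $\tfrac13$ when $\alpha$ is in the allowed range; one checks the exponent is uniform for $\alpha\in(6/5,5/4]$, which is where the $\alpha$-uniformity claim enters).

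\smallskip

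\textbf{Main obstacle.} The delicate point is the pressure: because $w=\chi u$ is not exactly divergence-free, the ``pressure'' $q$ appearing in the localized equation is not simply the cutoff of $p$, and one must carefully set up the correctors (Bogovski\u\i-type or explicit potential-theoretic solutions of $\div$-problems) so that all error terms are genuinely lower order and estimable by the three quantities on the right of \eqref{ts:linearizzato}. The second delicacy is tracking the $\alpha$-dependence of every constant (Sobolev embedding constants, the kernel bounds, the extension constants $c_\alpha$) to ensure uniformity on $\alpha\in(6/5,5/4]$; this is routine but must be done explicitly, as the paper insists on it.
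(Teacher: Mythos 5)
Your Steps 1--2 are broadly in the spirit of the paper (cutoff energy estimates via the Yang extension, the far part of $(-\Delta)^\alpha$ controlled by the tail), but Step 3 is a genuine gap: you invoke ``standard interior Schauder/$L^p$ theory'' or a De Giorgi--Nash--Moser iteration to jump from $w\in L^{10/3}$, $q\in L^{3/2}$ to $C^{0,1/3}$, and this black box is essentially the statement of the lemma itself. Moreover the mechanism you give for the exponent $\frac13$ is wrong: a Morrey-type embedding $C^{0,1-5/q}$ from $q=10/3$ is vacuous since $1-5/q<0$ (and with the natural parabolic scaling the relevant dimension is $3+2\alpha$, making it worse). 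The exponent $\frac13$ cannot come from spatial integrability at all; in the paper it comes from time regularity: one shows $\partial_t u\in L^{3/2}_t L^\infty_x$ locally (reading $\partial_t u$ off the equation once $u$ is spatially smooth, with the far field of $(-\Delta)^\alpha u$ estimated by the tail), and then $|u(x,t)-u(x,s)|\le\|\partial_t u\|_{L^{3/2}_t L^\infty_x}|t-s|^{1/3}$ by H\"older. Note also that no parabolic H\"older theory with the pressure as an $L^{3/2}$-in-time forcing can do better, precisely because $\partial_t u$ is only $L^{3/2}$ in time; space and time must be treated separately, which your argument does not do. Similarly, the claimed bound $q\in L^{3/2}_t C^{1/2}_x$ ``by Calder\'on--Zygmund'' presupposes spatial regularity of $u$ that you have not yet established at that stage.

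The paper's route avoids both your difficulties and supplies the missing bootstrap: since \eqref{e:linNS_1} is linear with constant coefficients, every spatial derivative $\partial_i u$ is again a solution, so one repeats the localized energy estimate for $\partial_i u$ (after first controlling $\nabla p$ in $L^{3/2}_t L^2_x$ via a potential-theoretic corrector $\hat p$ with $\Delta\hat p={\rm div}(M_0\cdot\nabla(\psi u))$ and the harmonicity of $\hat p-p$ inside the cutoff region), and iterates to get $u\in L^\infty_t H^k_x(B_{9/16})$ for arbitrary $k$, hence $u\in L^\infty_t C^3_x$ by Morrey, with analogous bounds for $p$. This also removes your ``main obstacle'': the paper never writes an equation for $\chi u$, so no Bogovski\u{\i}-type divergence correctors or commutators $[(-\Delta)^\alpha,\chi]$ appear; cutoffs enter only through the test functions in the extension-variable energy identity, exactly as in Lemma \ref{lemma:suitable}. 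As written, your proposal does not close Step 3, so the proof is incomplete.
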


%{\color{red} PROSSIMO REMARK SCOMPARE?
%\begin{remark}\label{e:u^*_well_defined}
%Observe that, thanks to Lemma \ref{lem:withtails}, \eqref{e:ci_vuole2!} implies that $u^*$ is well-defined and that the third summand in the left hand side of \eqref{ts:linearizzato} is finite. 
%\end{remark}
%}
\begin{proof}
By a standard regularization argument (which can be used because \eqref{e:linNS_1} is a linear system with constant coefficients) we assume $u, p\in C^\infty$. In this proof we denote by $C$ a generic constant possibly depending on $\alpha$ and $M_0$, and by $u^*$ the map $P (\cdot, y)\ast u$ with $P$ as in Proposition \ref{prop:poisson_k}. 
In the proof we use several cut-off functions on $\RR^4_+$. We follow the convention that a ``cut-off function between $A$ and $B$'' denotes a smooth function equal to $1$ on $A$ and compactly supported in $B$. Such cut-offs might be nonzero on $\{y=0\}$ and are assumed to be constant in the variable $y$ over a suitable strip $\RR^3\times [0, y_0]$.

We derive a first energy estimate multiplying the equation by $\varphi_1^2 u$, where $\varphi_1$ is a cutoff between $B_{7/8}^*$ and $B_1^*$%. Moreover, $\varphi_1$ is independent of the variable $y$ in the strip $\RR^3 \times [0,1/2]$. We obtain
\[
\frac{d}{dt}\int_{B_1}|u|^2\varphi_1^2=2\int_{B_1} \left(p\nabla\varphi_1\cdot u\varphi_1+ M_0\cdot\nabla\varphi_1|u|^2 \varphi_1-(-\Delta)^\alpha u\cdot u\varphi_1^2\right)\, .
\]
Arguing as in the proof of Lemma \ref{lemma:suitable}, with the obvious adjustments, we conclude that
\begin{align*}
& \sup_{t\in\left(-(\frac 34)^{2\alpha},0\right]}\frac 12\int_{B_{3/4}} |u|^2+\int_{-\left(\frac 34\right)^{2\alpha}}^0\int_{B_{3/4}^*} |y|^b|\overline\Delta_b u^*|^2\\
\le  & C \left(\int_{Q_1} |p|^\frac 32\right)^{\frac 23}\left(\int_{Q_1} |u|^3\right)^\frac 13(1+M)+CM\int_{Q_1}|u|^2+C\int_{Q_{1}^*}y^b|u^*|^2.
\end{align*}
%We now repeat the following two steps finitely many times:
%\begin{itemize}
Consider now a new cutoff function $\varphi_{3/4}$ between $B_{1/2}^*$ and $B_{3/4}^*$.  Thanks to fractional Sobolev embeddings, the properties of the extension $u^*$ \eqref{e:en_of_ext} and \eqref{e:minimum_yang_ext} and the interpolation result of Lemma \ref{lem:interpol}, we have:
\begin{align}
\int_{Q_\frac 34} |\nabla u|^2 & \le \int_{-\left(\frac 34\right)^{2\alpha}}^{0}\int_{\RR^3}|\nabla(u\varphi_{3/4})|^2\le C\int_{-\left(\frac 34\right)^{2\alpha}}^{0}\int_{\RR^3}|(-\Delta)^{\frac{\alpha}{2}}(u\varphi_{3/4})|^2=\int_{-\left(\frac 34\right)^{2\alpha}}^{0}\int_{\RR^4_+} y^b|\overline\Delta_b (u\varphi_{3/4})^*|^2\nonumber\\
& \le\int_{-\left(\frac 34\right)^{2\alpha}}^{0}\int_{\RR^4_+} y^b|\overline\Delta_b (u^*\varphi_{3/4})|^2\nonumber\\
&\le\int_{-\left(\frac 34\right)^{2\alpha}}^{0}\int_{\RR^4_+} y^b\varphi_{3/4}^2|\overline\Delta_b u^*|^2+C\int_{-\left(\frac 34\right)^{2\alpha}}^{0}\int_{\RR^4_+}y^b|\overline\nabla\varphi_{3/4}|^2|\overline\nabla u^*|^2+C\int_{Q_1^*}y^b|u^*|^2\nonumber\\
&\le C\int_{-\left(\frac 34\right)^{2\alpha}}^{0}\int_{B_1^*} y^b |\overline\Delta_b u^*|^2\varphi_{3/4}^2+C\int_{Q_1^*}y^b|u^*|^2.
\end{align}
Concerning the pressure, let $\psi\in C^\infty_c(B_{3/4})$ be a further cutoff function, identically $1$ on $B_{23/32}$. Let $\hat p$ be the potential theoretic solution of
\[
\Delta\hat p={\rm div}(M_0\cdot\nabla(\psi u)).
\]
By Calder\'on-Zygmund estimates
\[
\|\nabla\hat p\|_{L^\frac 32_tL^2_x(Q_{{11}/{16}})}\le\|M_0\cdot\nabla(\psi u)\|_{L^\frac 32_tL^2_x(Q_{{11}/{16}})}\le C \|\nabla u\|_{L^\frac 32_tL^2_x(Q_{3/4})} +C\|u\|_{L^\frac 32_tL^2_x(Q_{3/4})}.
\]
Since $\hat p-p$ is harmonic in $B_{{23}/{32}}$ we can control
\[
\|\nabla(\hat p -p)\|_{L^\frac 32_t L^2_x(B_{{11}/{16}})}\le C\|\hat p-p\|_{L^\frac 32_{x,t}(B_{{23}/{32}})}\le C\Big(\|p\|_{L^\frac 32_{x,t}(B_{3/4})}+ \|u\|_{L^2_{x,t}(B_{3/4})}\Big).
\]
Therefore
\begin{equation}
\label{eqn:p-aiuto}
\|\nabla p\|_{L^{\frac 32}_t L^2_x(Q_{{11}/{16}})}\le \|\nabla u\|_{L^2(Q_{3/4})}+C\|u\|_{L^2(Q_1)}+C\|p\|_{L^{\frac 32}(Q_1)}.
\end{equation}

Observe that, by linearity again, any derivative $\partial_i u$ is a solution of \eqref{e:linNS_1}, in particular
\[
\partial_t \partial_i u+\nabla\partial_i p+ M_0\cdot\nabla \partial_i u=-(-\Delta)^\alpha \partial_i u.
\]
Let $\varphi_{11/16}$ be a cutoff function between $B_{5/8}^*$ and  $B_{11/16}^*$. Multiplying the equation by $\varphi_{11/16}\partial_i u$ and integrating in space, we obtain
\[
\frac{d}{dt}\int_{B_1}|\partial_i u|^2\varphi_{11/16}^2=2\int_{B_1} \partial_i p\nabla\varphi_{11/16}\cdot \partial_i u\varphi_{11/16}+ M_0\cdot\nabla\varphi_{11/16}|\partial_i u|^2 \varphi_{11/16}-(-\Delta)^\alpha \partial_i u\cdot \partial_i u\varphi_{11/16}^2.
\]

\noindent Integrating in time and performing tricks as above,  
\begin{multline}
\label{eqn:energy-ineq-deriv-lin}
\sup_{t\in\left(-(\frac{11}{16})^{2\alpha},0\right]} \int_{B_1}|\partial_i u|^2\varphi_{11/16}^2 
+\int_{-\left(\frac{11}{16}\right)^{2\alpha}}^0\int y^b|\overline\Delta_b \partial_iu^*|^2\varphi_{11/16}^2
\\ \leq
2\int_{Q_{11/16}} \partial_i p\nabla\varphi_{11/16}\cdot \partial_i u\varphi_{11/16}
+C \int_{Q_{11/16}} |\partial_i u|^2 \varphi_{11/16}
+
C\int_{Q_{11/16}^*}y^b|\partial_iu^*|^2.
\end{multline}
The third term in the right hand side can be estimated via Lemma~\ref{lem:interpol}. Regarding the first term, we have:
$$
\int_{Q_{11/16}} \partial_i p\nabla\varphi_{11/16}\cdot \partial_i u\varphi_{11/16}
\leq C\| \nabla p\|^2_{L^{1} ((-(11/16)^{2\alpha},0];L^2(B_{11/16}))} + \frac{1}{4} \| \partial_i u \varphi_{11/16}\|^2_{L^\infty((-(11/16)^{2\alpha},0]; L^2(B_{11/16}))}. 
$$
The first term in the right hand side is estimated by \eqref{eqn:p-aiuto} and the second term is absorbed in the left hand side of \eqref{eqn:energy-ineq-deriv-lin}. We conclude that
\begin{equation*}
\|u\|_{L^\infty((-(\frac 58)^{2\alpha},0];H^1(B_{5/8}))} 
+\int_{Q_{5/8}} y^b|\overline\Delta_b \partial_iu^*|^2\leq
C\|u\|_{L^3(Q_1)}^2+ C\|p\|_{L^{\frac 32}(Q_1)}
+C\int_{Q_1^*}y^b|u^*|^2.
\end{equation*}

Iterating $k$ times the above estimates (possibly introducing a suitable number of intermediate radii depending on $k$)
\begin{equation}
\label{eqn:energy-ineq-,moltederiv}
\|u\|_{L^\infty((-(9/16)^{2\alpha},0],H^k (B_{9/16}))} 
%+\int_{Q_{5/8}} y^b|\overline\Delta_b \partial_iu^*|^2\varphi^2
\leq
C\|u\|_{L^3(Q_1)}+ C\|p\|_{L^{\frac 32}(Q_1)}
+C\int_{{Q_1}^*}y^b|u^*|^2,
\end{equation}
where the constant $C$ depends also on $k$.
Similarly, with the same argument for \eqref{eqn:p-aiuto} we achieve
\begin{equation}
\label{eqn:moltederiv-p}
\|p\|_{L^{\frac 32}((-(9/16)^{2\alpha},0], H^k(B_{9/16}))} 
%+\int_{Q_{5/8}} y^b|\overline\Delta_b \partial_iu^*|^2\varphi^2
\leq
C\|u\|_{L^3(Q_1)}+ C\|p\|_{L^{\frac 32}(Q_1)}
+C\int_{Q_1^*}y^b|u^*|^2.
\end{equation}

Using $k=6$ and Morrey's embedding theorem, from \eqref{eqn:moltederiv-p} we deduce the following spatial estimate for $u$:
\begin{align*}
\|u\|_{L^\infty\left((-(9/16)^{2\alpha},0], C^{3}(B_{9/16})\right)} \leq \|u\|_{L^\infty\left((-(9/16)^{2\alpha},0], H^6 (B_{9/16})\right)} \leq
C\|u\|_{L^3(Q_1)}+ C\|p\|_{L^{\frac 32}(Q_1)}
+C\int_{Q_{1}^*}y^b|u^*|^2.
\end{align*}
Thanks to Lemma \ref{lem:withtails}, the right hand side in the previous equation is estimated by the right hand side in the estimate for the pressure in \eqref{ts:linearizzato}.
From the equation \eqref{e:linNS_1}, we estimate the time derivative of $u$ with
$$
\| \partial_t u\|_{L^{\frac 32}\left((-(1/2)^{2\alpha},0],L^\infty(B_{1/2})\right)} \leq \| \nabla p \|_{L^{\frac 32}\left((-(1/2)^{2\alpha},0],L^\infty(B_{1/2})\right)}+(1+M)\|Du\|_{L^\infty(Q_1)} + \|(-\Delta)^\alpha u\|_{ L^\infty(Q_{1/2})}.
$$
For the last term, we consider a cutoff $\varphi_{9/16}$ between $Q_{1/2}$ and $Q_{9/16}$ to deduce
$$
 \|(-\Delta)^\alpha u\|_{ L^\infty(Q_{1/2})} \leq  \|(-\Delta)^\alpha (u \varphi_{9/16})\|_{ L^\infty(Q_{1/2})} + \|(-\Delta)^\alpha (u (1-\varphi_{9/16}))\|_{ L^\infty(Q_{1/2})}.
$$
The term $ \|(-\Delta)^\alpha (u \varphi_{9/16})\|_{ L^\infty(Q_{1/2})}$ is estimated by $ \|u\|_{L^\infty\left((-(1/2)^{2\alpha},0],C^3(B_{1/2})\right)} $. For the other term we observe that, for any $z \in {\rm supp }\, \varphi_{9/16}$
$$
(-\Delta)^\alpha (u (1-\varphi_{9/16}))(x) 
= \int \frac{-\Delta(u(1-\varphi_{9/16}))(z)}{|x-z|^{3+2\alpha-2}}\,dz
=c_\alpha \int \frac{(1-\varphi_{9/16}(z))u(z)}{|x-z|^{3+2\alpha}}
\, dz\,.
$$
Hence, the term $\|(-\Delta)^\alpha (u (1-\varphi_{9/16}))\|_{ L^\infty(Q_{1/2})} $ {can be estimated with the third summand in the right hand side of \eqref{ts:linearizzato}. Summarizing, we can bound $\| \partial_t u\|_{L^{\frac 32}\left((-(1/2)^{2\alpha},0],L^\infty(B_{1/2})\right)}$ with the right hand side of \eqref{ts:linearizzato}. Since
$\|u\|_{C^{\frac 13}([-(1/2)^{2\alpha},0]; L^\infty(B_{1/2}))}$ is bounded by $\| \partial_t u\|_{L^{\frac 32}\left((-(1/2)^{2\alpha},0],L^\infty(B_{1/2})\right)}$, we conclude the H\"older continuity estimate in time, which completes the proof. }
\end{proof}

\section{Excess decay and proof of the $\eps$-regularity Theorem \ref{thm:eps_reg-variant}}\label{s:excess_decay}

We now come to the core of the proof of Theorem \ref{thm:eps_reg-variant}, which is the following ``excess decay estimate''. After proving it we will show how to set up an iteration which leads to Theorem \ref{thm:eps_reg-variant}. 

\begin{propos}\label{prop:exc_decay}
Let $(u,p)$ be a suitable weak solution of the hyperdissipative Navier-Stokes equation, $\alpha \in (1, \frac{5}{4}]$ and $M\geq 0$. Then there exist $\vartheta:= \vartheta(\alpha, M)\in (0,\frac{1}{2})$ and $\eps:=\eps(\alpha, M) \in (0,\frac{1}{2})$ with the following property. If $r\leq 1$, $Q_r(x,t) \subseteq \RR^3 \times [0,T]$ for some $(x,t)\in \RR^3 \times [0,T]$ and
\[
| (u)_{Q_r(x,t)}|\le M\quad\text{ and }\quad E(u,p;x,t,r)\le \eps\, ,
\]
then
\[
E(u,p;x,t,\vartheta r)\le\frac 12 E(u,p;x,t,r).
\]
Moreover $\vartheta$ and $\varepsilon$ are uniformly bounded away from $0$ if the pair $(\alpha, M)$ ranges in a compact subset of $(1, \frac{5}{4}]\times [0, \infty)$.
\end{propos}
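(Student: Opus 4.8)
The natural approach is a blow-up/compactness argument by contradiction, in the spirit of the classical Caffarelli--Kohn--Nirenberg excess decay, using Lemma~\ref{lem:lions} for compactness and Lemma~\ref{lem:hoelder} for the regularity of the blow-up limit.

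\emph{Choice of $\vartheta$.} First I would use Lemma~\ref{lem:hoelder} to fix $\vartheta$: if $(v,q)$ solves \eqref{e:linNS_1} in $Q_1$ with $(v)_{Q_1}=0$, $[q]_{B_1}=0$, $E^{nl}(v;0,0,1)<\infty$ and $E(v;0,0,1)\le 1$, then the right hand side of \eqref{ts:linearizzato} is bounded by a dimensional multiple of $E(v;0,0,1)\le 1$, so $v$ is $C^k$ in space and $C^{1/3}$ in time on $Q_{1/2}$ with seminorms controlled by $\overline C(\alpha,M)$. A Taylor expansion at the centre then gives $E(v,q;0,0,\vartheta)\le C(\alpha,M)\,\vartheta^{c_\alpha}$ for some $c_\alpha>0$ (the velocity and nonlocal parts decay like $\vartheta^{2\alpha/3}$, modulo the far field, and the pressure part like $\vartheta^{2\alpha/3-1/2}$, both positive powers for $\alpha\in(1,\tfrac54]$). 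Pick $\vartheta=\vartheta(\alpha,M)<\tfrac14$ so that $C(\alpha,M)\vartheta^{c_\alpha}\le\tfrac14$.

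\emph{The contradiction.} Suppose the proposition fails for this $\vartheta$ and every $\eps$: there are suitable weak solutions $(u_k,p_k)$, points $(x_k,t_k)$ and radii $r_k\le 1$ with $Q_{r_k}(x_k,t_k)\subseteq\RR^3\times[0,T]$, $|(u_k)_{Q_{r_k}(x_k,t_k)}|\le M$, $\eps_k:=E(u_k,p_k;x_k,t_k,r_k)\to 0$ (and $\eps_k>0$, else the conclusion is trivial), yet $E(u_k,p_k;x_k,t_k,\vartheta r_k)>\tfrac12\eps_k$. Using the scaling \eqref{e:scaling}--\eqref{e:scaling2} I would translate and rescale so that $(x_k,t_k)=(0,0)$ and $r_k=1$: the rescaled pairs are suitable weak solutions on $\RR^3\times[-1,0]$, still with $|(u_k)_{Q_1}|\le M$, rescaled excess tending to $0$ and the failed decay preserved. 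Passing to a subsequence with $(u_k)_{Q_1}\to M_0$, $|M_0|\le M$, and setting $v_k:=(u_k-(u_k)_{Q_1})/E(u_k,p_k;0,0,1)$, $q_k:=(p_k-[p_k]_{B_1})/E(u_k,p_k;0,0,1)$, Lemma~\ref{lem:lions} gives (up to a further subsequence) $v_k\to v$ in $L^3(Q_{1/2})$, $q_k\to q$ in $L^{3/2}(Q_{1/2})$, with $(v,q)$ solving \eqref{e:linNS_1} in $Q_{1/2}$, $(v)_{Q_1}=0$, $[q]_{B_1}=0$, $E(v;0,0,1)\le 1$. Rescaling $Q_{1/2}$ to $Q_1$, Lemma~\ref{lem:hoelder} applies, so $E(v,q;0,0,\vartheta)\le C(\alpha,M)\vartheta^{c_\alpha}\le\tfrac14$.

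\emph{Passage to the limit.} From $u_k-(u_k)_{Q_\vartheta}=E(u_k,p_k;0,0,1)(v_k-(v_k)_{Q_\vartheta})$ and the analogous identity for the pressure, the definitions give $E(u_k,p_k;0,0,\vartheta)=E(u_k,p_k;0,0,1)\,E(v_k,q_k;0,0,\vartheta)$; hence it is enough to prove $\limsup_k E(v_k,q_k;0,0,\vartheta)\le E(v,q;0,0,\vartheta)\le\tfrac14$, which gives $E(u_k,p_k;0,0,\vartheta)\le\tfrac12 E(u_k,p_k;0,0,1)$ for $k$ large --- a contradiction. The velocity and pressure parts converge because $v_k\to v$ in $L^3(Q_\vartheta)$ and $q_k\to q$ in $L^{3/2}(Q_\vartheta)$ (recall $\vartheta<\tfrac12$). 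The main obstacle will be the nonlocal part $E^{nl}(v_k;0,0,\vartheta)$, which sees $v_k$ on all of $\RR^3$: here one splits $\sup_{R\ge\vartheta/4}$ into a near range, where strong convergence on $Q_{1/2}$ together with the continuity of $v$ at the centre yields smallness, and a far range, where $(\vartheta/R)^{3\alpha}$ together with the uniform bound $E^{nl}(v_k;0,0,1)\le 1$ produces a favourable power of $\vartheta$ (letting the splitting radius depend on $\vartheta$). Finally, the uniformity of $\vartheta$ and $\eps$ on compact subsets of $(1,\tfrac54]\times[0,\infty)$ is inherited from the analogous uniformities of Lemmas~\ref{lem:lions} and~\ref{lem:hoelder} together with the continuity of the constants in $\alpha$, since the whole argument only invokes those two lemmas and elementary interpolation.
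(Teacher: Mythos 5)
Your proposal is correct and follows essentially the same route as the paper: a contradiction/compactness argument in which the rescaled, normalized sequence $(v_k,q_k)$ converges via Lemma \ref{lem:lions}, the limit solves the linearized system and is regular by Lemma \ref{lem:hoelder}, and the nonlocal excess at scale $\vartheta$ is split into a near range (handled by the H\"older regularity of the limit) and a far range (handled by the uniform bound $E^{nl}(v_k;0,0,1)\le 1$ and the factor $(\vartheta/R)^{3\alpha}$). The only organizational difference is that you fix $\vartheta$ upfront from the linear estimate, whereas the paper derives $\tfrac12\le c\bigl(\vartheta^{1/3}+\vartheta^{1/2}+\vartheta^{1/3}+(4\vartheta)^{3\alpha/2}\bigr)$ with $c$ independent of $\vartheta$ and chooses $\vartheta$ at the very end; note that your intermediate claim $\limsup_k E(v_k,q_k;0,0,\vartheta)\le E(v,q;0,0,\vartheta)$ is not literally true for the nonlocal part (the far field leaves an extra error of the form $C\vartheta^{3\alpha/2}$), but since that constant is independent of $\vartheta$ it can be folded into your choice of $\vartheta$, so the argument goes through as in the paper.
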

\begin{proof}
Without loss of generality we can assume that $(x,t)= (0,0)$ and we omit to specify it in the excess (and its variants). It suffices to prove
\[
E(u,p;\vartheta)\le\frac 12 E(u,p;1).
\]
Indeed, using \eqref{e:scaling} and \eqref{e:scaling2}, 
\[
E(u,p;r\vartheta)=r^{1-2\alpha}E(u_r,p_r,\vartheta)\le\frac{r^{1-2\alpha}}{2}E(u_r,p_r;1)=E(u,p;r)
\]
and, if $E(u,p;r)\le \varepsilon$, then in particular $E(u_r,p_r;1)\le r^{2\alpha-1}\varepsilon\le\varepsilon$.
Let us assume by contradiction that there exists a sequence $(u_k,p_k)$ such that
\begin{align}
E(u_k,p_k;\vartheta) & \ge \frac 12 E(u_k,p_k;1)\\
\lim_{k\to\infty}E(u_k,p_k;1) & =0\\
|(u_k)_{Q_1}| & \le M,
\end{align}
in particular, without loss of generality, we can assume $(u_k)_{Q_1}\to M_0$ with $|M_0|\le M$.
We set
\begin{equation*}
v_k:=\frac{u_k-(u_k)_{Q_1}}{E(u_k,p_k;1)}\quad\text{and}\quad q_k:=\frac{p_k-[p_k]_{B_1}}{E(u_k,p_k;1)}\,.
\end{equation*}
Thus $E(v_k,q_k;1)=1$ and
\begin{equation}\label{e:contr_risc}
E(v_k,q_k;\vartheta)\ge \frac 12\,.
\end{equation}
We can estimate
\begin{align*}
E^{nl}(v_k;\vartheta) & = \left(\mean{-\vartheta^{2\alpha}}^0 \sup_{R\ge \fueta \vartheta}\left(\frac{\vartheta}
{R}\right)^{3\alpha}\mean{B_R}|v_k-(v_k)_\vartheta|^{2}\right)^\frac 12\\
    & \le \left(\mean{-\vartheta^{2\alpha}}^0 \bigg(\sup_{\fueta\vartheta\le R< \fueta}\left(\frac{\vartheta}
{R}\right)^{3\alpha} \mean{B_R}|v_k-(v_k)_\vartheta|^2
+ \sup_{R\ge \fueta}\left(\frac{\vartheta}
{R}\right)^{3\alpha}\mean{B_R}|v_k-(v_k)_\vartheta|^2\bigg)\right)^\frac 12.
\end{align*}
We estimate the second term by adding and subtracting the average on $Q_1$ and
\begin{align*}
\left(\mean{-\vartheta^{2\alpha}}^0 \sup_{R\ge \fueta}\left(\frac{\vartheta}
{R}\right)^{3\alpha} \mean{B_R}|v_k-(v_k)_\vartheta|^2\right)^\frac 12
& \leq \vartheta^{\frac{\alpha}{2}}E(v_k,q_k;1)+ \left(\fuetainv\vartheta\right)^{\frac{3\alpha}{2}}
|(v_k)_{Q_\vartheta}- (v_k)_{Q_1}|
\\
& \leq \vartheta^{\frac{\alpha}{2}}+ \left(\fuetainv\vartheta\right)^{\frac{3\alpha}{2}}\left(|(v_k)_{Q_\vartheta}- (v_k)_{Q_{1/2}}| + |(v_k)_{Q_{1/2}}- (v_k)_{Q_1}|\right)
\end{align*}
and we notice that $|(v_k)_{Q_{1/2}}- (v_k)_{Q_1}| \leq c E^V(v_k;1)\le c$, so that we get

\begin{align}\label{e:nl_ecc_est}
E^{nl}(v_k;\vartheta)\le &\left(\mean{-\vartheta^{2\alpha}}^0\sup_{\fueta\vartheta\le R<\fueta}\!\left(\frac{\vartheta}{R}\right)^{3\alpha}\!\frac{1}{R^{3}}\!\int_{B_R}|v_k-(v_k)_\theta|^2\right)^{\frac 12}\!\!
+\vartheta^{\frac{\alpha}{2}}+c\left(\fuetainv\vartheta\right)^{\frac{3\alpha}{2}}\!\left(1+|(v_k)_{Q_\vartheta}-(v_k)_{Q_\frac 12}|\right)\!.
\end{align}
We take the limit in \eqref{e:contr_risc}: Lemma \ref{lem:lions} gives a pair $(v,q)$ solution of \eqref{e:linNS_1}. Taking into account \eqref{e:nl_ecc_est}, we get
\begin{align*}
\frac 12\le & \hphantom{+}E^V(v;\vartheta)+ E^P(q;\vartheta)
 +\left(\mean{-\vartheta^{2\alpha}}^0\sup_{\fueta\vartheta\le R<\fueta}\left(\frac{\vartheta}{R}\right)^{3\alpha}\frac{1}{R^{3}}\int_{B_R}|v-(v)_{Q_\vartheta}|^2\right)^\frac 12\\
& +\vartheta^{\frac{\alpha}{2}}+c\left(\fuetainv\vartheta\right)^{\frac{3\alpha}{2}}\left(1+|(v)_{Q_\vartheta}-(v)_{Q_\frac 12}|\right).
\end{align*}
By semicontinuity $E(v,q;1)\le \liminf E(v_k,q_k;1)=1$. Thus, thanks to Lemma \ref{lem:hoelder} we obtain the bounds:
$$
E^V(v;\vartheta)\le c\vartheta^\frac 13, \qquad
 E^P(p;\vartheta)\le c\vartheta^\frac 12,
\qquad |(v)_{Q_\vartheta}-(v)_{Q_\frac 12}|\le c\,.
$$
Moreover, for every $(x,t)\in Q_R$, $|v-(v)_{Q_\vartheta}|\le CR^\frac 13$, hence
\[
\left(\mean{-\vartheta^{2\alpha}}^0\sup_{\fueta\vartheta\le R<\fueta}\left(\frac{\vartheta}{R}\right)^{3\alpha}\frac{1}{R^{3}}\int_{B_R}|v-(v)_{Q_\vartheta}|^2\right)^\frac 12
\le C\left(\mean{-\vartheta^{2\alpha}}^0\sup_{\fueta\vartheta\le R<\fueta} \left(\frac{\vartheta}{R}\right)^{3\alpha} R^{\frac 23}\right)^\frac 12\le C \vartheta^{\frac 13}.
\]
Putting everything together
\[
\frac 12\le c\left(\vartheta^\frac 13 +\vartheta^\frac 12+\vartheta^{\frac 13}+\left(\fuetainv\vartheta\right)^{\frac{3\alpha}{2}}\right)\,.
\]
Since $c$ is independent of $\vartheta$, if the latter is small enough we obtain the desired contradiction.
\end{proof}

\subsection{Proof Theorem~\ref{thm:eps_reg-variant}}
\begin{proof}[Proof of Theorem~\ref{thm:eps_reg-variant}]
\noindent{\bf Step 1.} Let $\varepsilon_0$ be a constant (to be chosen later) which is smaller than the one given in Proposition \ref{prop:exc_decay} for $M=1$, allowing the decay of the excess. There exists $\varepsilon>0$ such that, if \eqref{eccessivoealternativo-variant} holds, then 
\begin{equation}\label{e:exc_start_small-variant}
E(u,p;x_0,t_0,1)\le\varepsilon_0\quad\text{ for every }(x_0,t_0)\in Q_1.
\end{equation} 

To show the claim, we observe that the quantity in \eqref{eccessivoealternativo-variant} controls $E^V(u;x_0,t_0,1)+E^P(p;x_0,t_0,1)$:
\begin{equation}\label{e:E-V-control}
E^V(u;x_0,t_0,1)\le \left(\mean{Q_1(x_0,t_0)}|u|^3\right)^\frac 13 +  C|(u)_{Q_1(x_0,t_0)}|\le C\left(\mean{Q_2(x_0,t_0)}|u|^3\right)^\frac 13\le C^V\varepsilon^\frac 13
\end{equation} 
and similarly
\begin{equation}\label{e:E-P-control}
E^P(p;x_0,t_0,1)\le C^P\varepsilon^\frac 23.
\end{equation} 

 For the nonlocal part, we notice that for every $R \geq \fueta$ we have that $B_R(x) \subseteq B_{R+1}$ and
$$\sup_{R\ge\fueta}R^{-\frac{3\alpha}{2}} \left(\mean{B_R(x_0)} |u|^{2}\right)^\frac{1}{2}
\leq \sup_{R\ge \fueta}\left( \frac{R+1}{R}\right)^{3-\frac{3\alpha}{2}} \frac{1}{(R+1)^\frac{3\alpha}{2}} \left(\mean{B_{R+1}} |u|^2\right)^\frac{1}{2}.
$$

\noindent Integrating in $[t-1,t]$ and recalling that $|(u)_{Q_1(x_0,t_0)}|\le C\|u\|_{L^3(Q_1(x_0,t_0))}$, 
\[
\left(\int_{t-1}^t\sup_{R\ge \fueta}R^{-3\alpha} \mean{B_R(x_0)} |u-(u)_{Q_1(x_0,t_0)}|^2\right)^\frac 12
\le C \left(\mean{-2^{2\alpha}}^0\sup_{R\ge \fueta}R^{-3\alpha} \mean{B_R(x_0)} |u|^2\right)^\frac 12+C|(u)_{Q_1(x_0,t_0)}|,
\]

\noindent Provided that we choose $\varepsilon$ small enough to satisfy $C^{nl}\varepsilon^\frac 13+C^V\varepsilon^\frac 13+C^P\varepsilon^\frac 23\le\varepsilon_0$, we have shown \eqref{e:exc_start_small-variant}.

\noindent{\bf Step 2.} {We claim that there exist an exponent $\gamma>0$ and a constant $C>0$ such that, if $\varepsilon_0$ in \eqref{e:exc_start_small-variant} is small enough, then}
\begin{equation}\label{e:decay_fin}
E(u,p;x_0,t_0,r)\le Cr^\gamma\quad\text{ for every }(x_0,t_0)\in Q_1, r\le 1.
\end{equation}
This proves $u\in C^{0,\alpha}\left(Q_1\right)$ thanks to Morrey's theorem (see, for instance, \cite{Campanato}).
To show the claim, we prove by induction that, provided $\varepsilon_0$ is chosen smaller than a geometric constant (which will be specified later in terms of the constant $\vartheta$ given by Proposition \ref{prop:exc_decay} with $M=1$),
\begin{equation}\label{e:induct}
E(u,p;x_0,t_0,\vartheta^k)\le \frac{E(u,p;x_0,t_0,1)}{2^k}\le \frac{\varepsilon_0}{2^k}\quad \text{ and }\quad |(u)_{Q_{\vartheta^{k}}}|\le C\varepsilon_0\sum_{i=0}^k \frac{1}{2^{i-1}}
\end{equation}
(here and in the rest of Step 2, all excesses of $(u,p)$ and all cylinders are centered at any arbitrary point $(x_0,t_0)$, and the estimates are uniform as $(x_0,t_0)$ vary in $Q_1$).
For $k=0$ the first inequality follows from Step 1 and the second comes from the fact that $|(u)_{Q_1}|\le C\|u\|_{L^3(Q_1)}\le C\varepsilon^\frac 13$, so the choice of $\varepsilon$ above already fulfills the requirement. For the inductive step from $k$ to $k+1$ we first wish to apply Proposition \ref{prop:exc_decay} with $M=1$ and $r=\vartheta^k$. The condition on the excess follows from the inductive assumption. Concerning the average, we observe
\[
|(u)_{Q_{\vartheta^{k}}}|\le 4C\varepsilon_0,
\] 
thus we simply need $\varepsilon_0\le \frac{1}{4C}$. The Proposition \ref{prop:exc_decay} gives
\[
E(u,p;x_0,t_0,\vartheta^{k+1})\le \frac 12 E(u,p;x_0,t_0,\vartheta^k)\le\frac{E(u,p;x_0,t_0,1)}{2^{k+1}}.
\]
As for the average, we observe the simple inequality
\[
|(u)_{Q_{\vartheta^{k}}}-(u)_{Q_{\vartheta^{k+1}}}|\le \frac C{\vartheta^{3+2\alpha}} \left(\mean{Q_{\vartheta^k}} |u-(u)_{Q_{\vartheta^k}}|^3\right)^\frac 13\le C E(u,p;x_0,t_0,\vartheta^k)\le C\varepsilon_0 2^{-k},
\]
so that $|(u)_{Q_{\vartheta^{k+1}}}|\le |(u)_{Q_{\vartheta^{k}}}|+C\varepsilon_0 2^{-k}\le C\varepsilon_0\sum_{i=0}^{k+1}\frac{1}{2^{i-1}}$. Finally \eqref{e:decay_fin} is implied by \eqref{e:induct}: indeed
 for $r=\vartheta^k$ \eqref{e:decay_fin} corresponds to the first claim in \eqref{e:induct} choosing $\gamma=-\log_\vartheta(2)$, and
 for $r\in (\vartheta^{k+1},\vartheta^k)$ we compare $E(u,p;x_0,t_0,r)\le C(\vartheta) E(u,p;x_0,t_0,\vartheta^k)\le C \varepsilon_0 2^{-k}\le C \theta^{\gamma(k+1)}\le C r^{\gamma}$.
\end{proof}

\section{Proof of the Caffarelli--Kohn--Nirenberg type theorem}\label{s:CKN}

To prove Theorem \ref{t:CKN}, by translation invariance of the equation we can assume, without loss of generality, that $(x_0,t_0)=(0,0)$ and therefore, for every quantity as $\fE$ we omit the dependence on the function $(u,p)$ (which we consider fixed) and the point $(x,t)$ (which we assume to be the origin).  
We introduce moreover the following \emph{scaling invariant quantities} (according to the natural rescaling \eqref{e:scaling}):
\begin{align*}
\fB(r) & :=\frac{1}{r^{3-2\alpha}}\int_{Q_r}|\nabla u (x,t)|^2\,dx\,dt\\
\fF(r) & := \frac{1}{r^{5-2\alpha}}\int_{Q_r} |u (x,t)|^2\, dx\, dt\\
\fT(r) & := r^{5\alpha -2} \int_{-r^{2\alpha}}^0 \sup_{R\geq \frac{r}{4}} \frac{1}{R^{3\alpha}}\mean{B_R} |u|^2\, dt
\end{align*}
(in particular, the \emph{tail functional} $\fT$ has been already defined in \eqref{def:tail}).
The following lemma can be regarded as a Poincar\'e-type estimate which is not really using the Navier-Stokes equation. 
\begin{lemma}\label{l:BFT_small}
Let $\alpha \in (1,3/2)$, then there exists a constant $C=C(\alpha)$ such that,
for any function $u \in %L^{ \infty}( [0,T]; L^2 (\RR^d )) \cap 
L^2([0,T]; H^\alpha (\RR^3))$,
\[
 \limsup_{r\to 0}\; (\fB(r) + \fF(r) + \fT(r))<C \limsup_{r\to 0} \fE(r) \, .
\]
\end{lemma}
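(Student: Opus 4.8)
The statement is a purely function--theoretic (Poincar\'e--type) inequality, so no use of the equation is needed. The plan is to prove, for every sufficiently small $r$, a \emph{single--scale} estimate of the shape
\[
\fB(r)+\fF(r)+\fT(r)\ \le\ C\,\fE(C_0 r)+C'\bigl(\fB(C_0 r)+\fT(C_0 r)\bigr)+\omega_u(r)\,,
\]
with $C,C_0$ depending only on $\alpha$, with $\omega_u(r)\to 0$ as $r\to 0$, and with $C'<1$ (obtained by taking the localization radius $C_0$ large, or else $C'$ replaced by $\omega_u$ thanks to the short time window $(-r^{2\alpha},0)$). Iterating this over the dyadic scales $C_0^k r$ — a hole--filling argument of the Giusti type already used for Lemma~\ref{lemma:suitable} — and letting $r\to 0$ yields $\limsup_{r\to 0}(\fB+\fF+\fT)\le \frac{C}{1-C'}\limsup_{r\to 0}\fE$, because $\limsup_{r\to 0}\fE(C_0 r)=\limsup_{r\to 0}\fE(r)$ for the fixed dilation $C_0$ and because $\omega_u(C_0^k r)\to 0$. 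A preliminary elementary reduction removes $\fF$ from the discussion: since the supremum defining $\fT$ runs over all $R\ge r/4$, the choice $R=r$ gives $\fF(r)\le C(\alpha)\,\fT(4r)$, hence $\limsup\fF\le C\limsup\fT$.

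\textbf{Control of $\fB$.} At each time I would write $\int_{B_r}|\nabla u(\cdot,t)|^2=\int_{B_r}|\nabla u-(\nabla u)_{B_r}|^2+|B_r|\,|(\nabla u)_{B_r}|^2$. In the mean term, $(\nabla u)_{B_\rho}$ is by the divergence theorem a boundary average of $u$ on $\partial B_\rho$; choosing (by the mean value theorem, uniformly in $t\in(-r^{2\alpha},0)$) a radius $\rho\in[r,2r]$ with $\int_{-r^{2\alpha}}^0\mean{\partial B_\rho}|u(\cdot,t)|^2\,dt\le \frac{C}{r}\int_{Q_{2r}}|u|^2$, this contribution is bounded by $Cr^2\fF(2r)$ and enters $\omega_u$. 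The oscillation term I would handle by the fractional Poincar\'e inequality $\int_{B_r}|\nabla u-(\nabla u)_{B_r}|^2\le C r^{2\alpha-2}[\nabla u]^2_{\dot H^{\alpha-1}(B_{2r})}$ together with the identity $\int_{\RR^3}|(-\Delta)^{\frac{\alpha}{2}} u|^2= c_\alpha\int_{\RR^4_+}y^b|\overline\nabla(\nabla u)^\flat|^2$ in localized form: multiply $(\nabla u)^\flat$ by a cutoff between $B^*_r$ and $B^*_{C_0 r}$, use the minimality of the Caffarelli--Silvestre extension (Theorem~\ref{thm:CF}(c)) and the interpolation Lemma~\ref{lem:interpol}. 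This produces the main term $\int_{B^*_{C_0 r}}y^b|\overline\nabla(\nabla u)^\flat|^2$, which after time integration and renormalization is $\fE(C_0 r)$ up to a dimensional constant, plus an error of the form $\frac{1}{(C_0 r)^2}\int_{B^*_{C_0 r}}y^b|(\nabla u)^\flat|^2$.

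\textbf{The nonlocal corrections and $\fT$.} The error term above, and $\fT$ itself, I would treat via the Poisson representation $(\nabla u)^\flat(\cdot,y)=u\ast_x \nabla_x P^\flat(\cdot,y)=\nabla_x u^\flat(\cdot,y)$, with $P^\flat$ the Caffarelli--Silvestre Poisson kernel and the decay $|\nabla_x P^\flat(x,y)|\le C\,y^{2\alpha-2}(|x|^2+y^2)^{-(1+\alpha)}$. For $x\in B_{2r}$ split the $z$--integration into $|z|\le C_0 r$, $C_0 r\le|z|<1$, and $|z|\ge1$. The near range reproduces a dilated copy of the localized $H^1$--norm of $\nabla u$ about the origin, i.e.\ a multiple of $\fB(C_0 r)$, but the prefactor $C_0^{-2}$ from the cutoff together with the short time window forces this term to come either with coefficient $<1$ (so that the iteration closes) or with a factor $o(1)$. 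In the intermediate range one bounds $\int_{C_0 r\le|z|<1}\frac{|u(z,t)|}{|z|^{2+2\alpha}}\,dz$ by a dyadic sum $\sum_k 2^{k(1-2\alpha)}\bigl(\mean{B_{2^{k+1}}}|u(\cdot,t)|^2\bigr)^{1/2}$; since $\alpha<2$ the weights are summable against $(2^k)^{3\alpha/2}$, so this is controlled by $\bigl(\sup_{R\ge r/4}R^{-3\alpha}\mean{B_R}|u(\cdot,t)|^2\bigr)^{1/2}$, hence — after renormalization — by $\fT$. The far range, by Cauchy--Schwarz with $\int_{|z|\ge1}|z|^{-(4+4\alpha)}\,dz<\infty$, is bounded by $\|u(\cdot,t)\|_{L^2(\RR^3)}$, producing a positive power of $r$ times $\|u\|_{L^\infty_tL^2_x}^2$, i.e.\ $o(1)$. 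The same trichotomy applied to $\sup_{R\ge r/4}$ in $\fT$ — the part $R\ge1$ being $o(1)$ and the part $r/4\le R\le1$ being handled by the Sobolev embedding $\dot H^{\alpha}(\RR^3)\hookrightarrow L^{6/(3-2\alpha)}$ localized through the extension exactly as for $\fB$ — gives the single--scale estimate for $\fT$ and closes the scheme.

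\textbf{Main obstacle.} The hard part is the bookkeeping of the nonlocal corrections: one must establish a genuine local equivalence between the fractional seminorm $[\nabla u]_{\dot H^{\alpha-1}}$ on a ball and the localized weighted energy $\int_{B^*_{Cr}}y^b|\overline\nabla(\nabla u)^\flat|^2$ (the extension ``sees'' all of $\nabla u$, so a naive cutoff loses too much), and one must verify that the near--field contribution of the error carries a real gain — either the reabsorbable coefficient $C'<1$ obtained by enlarging the localization radius, or the vanishing factor coming from the width $r^{2\alpha}$ of the parabolic time slab — so that all the powers of $r$ accumulated through the trace, fractional Poincar\'e, and Poisson--kernel estimates match and only $\fE$ and $\fT$ survive. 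This is also the step that uses the hypothesis $\alpha\in(1,\tfrac32)$: $\alpha>1$ makes $y^{2\alpha-2}\,dy$ locally integrable and $b=3-2\alpha<1$, while $\alpha<\tfrac32$ keeps $b=3-2\alpha>0$ and the exponents governing $\omega_u$ (of the form $7-4\alpha$, $6\alpha-3$, $1+2\alpha$, \dots) positive.
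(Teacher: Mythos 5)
There is a genuine gap at the very point your ``Main obstacle'' paragraph flags, and the two mechanisms you offer to close it do not work. Your scheme localizes the fractional seminorm by a cutoff at scale $\rho=C_0r$: minimality of the Caffarelli--Silvestre extension plus the interpolation lemma give the main term $\int_{B^*_{C\rho}}y^b|\overline\nabla(\nabla u)^\flat|^2$ (fine: after time integration this is $\le C\,C_0^{5-4\alpha}\fE(C_0r)$, harmless for a limsup bound) \emph{plus} the cutoff error $\rho^{-2}\int_{B^*_{C\rho}}y^b|(\nabla u)^\flat|^2$. Now track the near-field of that error by scaling: since $\|P^\flat(\cdot,y)\ast(\chi\nabla u)\|_{L^2_x}\le C\|\chi \nabla u\|_{L^2_x}$ uniformly in $y$, its contribution is of order $\rho^{-2}\int_0^{\rho}y^b\,dy\int_{-r^{2\alpha}}^0\int_{B_{C\rho}}|\nabla u|^2\sim\rho^{2-2\alpha}\int_{Q_{C\rho}}|\nabla u|^2$, and after dividing by $r^{5-4\alpha}$ this is $\ge c\,C_0^{5-4\alpha}\fB(CC_0r)$ up to constants. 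For $\alpha\le\frac54$ (contained in the stated range $\alpha\in(1,\frac32)$ and the range of interest in the paper) the factor $C_0^{5-4\alpha}$ is $\ge1$ and \emph{grows} as you enlarge the localization radius, so ``taking $C_0$ large'' makes the reabsorption coefficient worse, not smaller than $1$; and the ``short time window'' gives no gain either, because every quantity in your single-scale inequality ($\fB$, $\fF$, $\fT$, $\fE$) is computed over time windows shrinking at the identical rate $r^{2\alpha}$, so restricting to $(-r^{2\alpha},0]$ only changes constants. In short, the cutoff error contains precisely the non-oscillatory part of $\nabla u$ — the part not seen by $\fE$ — with an $O(1)$-or-worse coefficient, and your hole-filling cannot close. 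The paper produces the needed smallness by a different device (steps \eqref{e:step1}--\eqref{e:step3}): it never localizes the seminorm by a cutoff, but compares $|(\nabla u)^\flat|$ at heights $y\in[\theta,2\theta]$ over the small ball with its average over the big slab $B_1\times[\theta,1]$, an unweighted Poincar\'e inequality in the extended variables; the only term not controlled by $\fE$ then carries the small measure factor $\theta^4$, which after normalization yields the reabsorbable coefficient $C\theta^{2\alpha}$ in $\fB(\theta r)\le\frac12\fB(r)+C\fE(r)$.

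Two secondary problems compound this. First, your treatment of the mean term is quantitatively off: with the boundary-trace bound $|(\nabla u)_{B_\rho}|\le Cr^{-2}(\int_{\partial B_\rho}|u|^2)^{1/2}$ and the good-radius choice, its contribution to $\fB(r)$ is $\approx C\fF(2r)$, not $Cr^2\fF(2r)$, so it is not part of $\omega_u(r)$; since you reduce $\fF$ to $\fT$ and then estimate $\fT$ ``exactly as for $\fB$'', the quantities $\fB,\fF,\fT$ all feed back into one another with $O(1)$ (or $C_0$-growing) constants, and the simultaneous absorption is never justified (the paper avoids this by the strictly ordered chain $\fE\Rightarrow\fB\Rightarrow\fF,\fT$, each with a genuine decay factor). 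Second, passing to the limsup and absorbing requires either an a priori finiteness of $\limsup_{r\to0}\fB(r)$ or the downward iteration from a fixed scale that the paper's form of the inequality permits; your sketch glosses over this, though it would be repairable if the single-scale inequality itself were available with a small constant — which, as explained above, is the real missing ingredient.
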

\begin{proof} {\bf Estimate on $\fB$.}
We claim that there exist $\theta\in (0,1/4)$ and $C:=C(\theta)>0$ such that 
\begin{equation}\label{e:B_da_iterare}
\fB(\theta r) \leq \frac{1}{2} \fB(r)+ C \fE(r) \qquad\qquad \forall r >0\, .
\end{equation} 
This obviously implies 
\begin{equation}\label{e:pezzo_in_B}
 \limsup_{r\to 0} \fB(r) \leq C \limsup_{r\to 0} \fE (r) \, .
\end{equation}
Moreover, without loss of generality we prove \eqref{e:B_da_iterare} for $r=1$. 

\medskip

In order to simplify our notation we set $v:= \nabla u$ and $v^\flat := (\nabla u)^\flat$. We first show the following two inequalities
\begin{align}
\int_{Q^*_1} |v^\flat|^2 &\leq C (\fB(1)+ \fE(1))\label{e:step1}\\
\fB (\theta) &\leq \frac{C}{\theta^{4-2\alpha}} \int_{-\theta^{2\alpha}}^0 \int_{B_\theta \times [\theta, 2\theta]} |v^\flat|^2+ \frac{C}{\theta^{4-2\alpha}} \fE (1)\, \label{e:step2}.
\end{align}
For every $(x,y,t)\in Q_1^*$ we estimate
\begin{align}
|v^\flat|^2 (x,y,t) &\leq 2 |v|^2 (x,t) + 2 \left(\int_0^y |\partial_y v^\flat| (x,z,t)\, dz\right)^2\nonumber\\
&\leq 2 |v|^2(x,t) + 2 \int_0^y z^{b} |\overline\nabla v^\flat|^2 (x,z,t)\, dz\int_0^y z^{2\alpha-3}\, dz\nonumber\\
&\leq 2 |v|^2 (x,t) + \frac{2}{2\alpha-3} \int_0^y z^{b} |\overline\nabla v^\flat|^2 (x,z,t)\, dz\,.\label{e:diretta}
\end{align}
Similarly
\begin{equation}\label{e:contraria}
 |v|^2 (x,t) \leq 2 |v^\flat|^2 (x,y,t) + \frac{2}{2\alpha-3} \int_0^y z^{b} |\nabla v^\flat|^2 (x,z,t)\, dz\,.
\end{equation}
Integrating \eqref{e:diretta} on $Q_1^*$ we easily conclude \eqref{e:step1}.
Integrating \eqref{e:contraria} in $(x,y,t)$ on $B_\theta\times [\theta, 2\theta]\times (-\theta^{2\alpha},0]$ and dividing by $\theta^{4-2\alpha}$ we reach \eqref{e:step2}.
Next we compute
\begin{align}
\int_{-\theta^{2\alpha}}^0 \int_{B_\theta \times [\theta, 2\theta]} |v^\flat|^2  &\leq C \int_{-\theta^{2\alpha}}^0 \int_{B_\theta\times [\theta, 2\theta]} \left|v^\flat (x,y,t) - \mean{B_1\times [\theta,1]} v^\flat (\cdot , t)\right|^2
+ C \theta^4 \int_{-\theta^{2\alpha}}^0 \left|\mean{B_1\times [\theta,1]} v^\flat (\cdot , t)\right|^2\nonumber\\
&\leq C  \int_{-\theta^{2\alpha}}^0 \int_{B_1\times [\theta,1]} \left|v^\flat -   \mean{B_1\times [\theta, 1]} v^\flat\right|^2 
+ C \theta^4 \int_{Q_1^*} |v^\flat|^2\nonumber\\
&\leq C \int_{-\theta^{2\alpha}}^0 \int_{B_1\times [\theta,1]} |\overline\nabla v^\flat|^2 + C \theta^4 \int_{Q_1^*} |v^\flat|^2\nonumber\\
&\leq \frac{C}{\theta^{3-2\alpha}} \int_{Q_1^*} y^{b} |\overline\nabla v^\flat|^2 +C \theta^4 \int_{Q_1^*} |v^\flat|^2 
= C\theta^4 \int_{Q_1^*} |v^\flat|^2 + \frac{C}{\theta^{3-2\alpha}} \fE(1)\, .\label{e:step3}
\end{align}
Using \eqref{e:step2}, \eqref{e:step3} and  \eqref{e:step1}, we have that
\[
\fB (\theta) \leq C \theta^{2\alpha}\int_{Q_1^*}|v^\flat|^2 + C \left(\theta^{2\alpha-4} +\theta^{4\alpha-7}\right) \fE (1)\leq C \theta^{2\alpha}\fB(1) + C \left(\theta^{2\alpha}+\theta^{2\alpha-4} +\theta^{4\alpha-7}\right) \fE (1)\, .
\]
Choosing $\theta$ appropriately small, we conclude \eqref{e:B_da_iterare}.

\bigskip

{\bf Estimate on $\fF$.} We proceed similarly as above and claim that there exists $\theta\in (0,1/4)$ such that
\begin{equation}\label{e:F_da_iterare}
\fF(\theta r) \leq \frac{1}{2} \fF (r) + C \fB(r)\, .
\end{equation}
Again we prove the claim, without loss of generality, for $r=1$. Indeed
\begin{align*}
\fF (\theta) &\leq \frac{2}{\theta^{5-2\alpha}} \int_{Q_\theta} \left|u - \mean{B_1} u\right|^2 +  2\theta^{2\alpha-2}  \int_{-\theta^{2\alpha}}^0 \left|\mean{B_1} u\right|^2\leq \frac{2}{\theta^{5-2\alpha}} \int_{Q_1} \left|u - \mean{B_1} u\right|^2 + 2\theta^{2\alpha-2} \int_{Q_1} |u|^2\\
&\leq \frac{C}{\theta^{5-2\alpha}} \int_{Q_1} |\nabla u|^2 + C \theta^{2\alpha-2} \fF(1) = \frac{C}{\theta^{5-2\alpha}} \fB(1) + C \theta^{2\alpha-2} \fF(1)\, ,
\end{align*}
and choosing $\theta$ appropriately we reach the desired conclusion.

\bigskip

{\bf Estimate on $\fT$.} As above we claim that there exists $\theta\in (0,1/4)$ such that
\begin{equation}\label{e:T_da_iterare}
\fT (\theta r) \leq \frac{1}{2} \fT (r) + C \fB(r)
\end{equation}
and we assume, without loss of generality, that for $r=1$.

Let $S(t)$ be the function $S(t) := \sup_{R\geq \frac{1}{4}} \frac{1}{R^{3\alpha}} \textmean{B_R} |u|^2$. Let $\rho \in [\frac{\theta}{4}, \frac{1}{4})$. We simply compute
\begin{align*}
\mean{B_\rho} |u|^2 &\leq 2 \mean{B_\rho} \left|u-[u]_{B_1}\right|^2 + 2 \left|\mean{B_1} u\right|^{2}
\leq \frac{C}{\rho^3} \int_{B_1}  \left|u-[u]_{B_1}\right|^{2} + C S(t)
\leq \frac{C}{\theta^3} \int_{B_1} |Du|^2 + C S(t)\,.
\end{align*}
In particular 
\[
\sup_{\frac{\theta}{4} \leq R} \frac{1}{R^{3\alpha}} \mean{B_R} |u|^2
\leq \frac{C}{\theta^{3\alpha}} S(t) + \frac{C}{\theta^{3 +3\alpha}} \int_{B_1} |Du|^2\, .
\]
Integrating in time and multiplying by $\theta^{5\alpha-2}$ we obtain
\[
\fT (\theta) \leq C \theta^{2\alpha-2} \fT (1) + \frac{C}{\theta^{5-2\alpha}} \fB(1)\, ,
\]
from which the desired conclusion follows choosing $\theta$ sufficiently small. 
\end{proof}

In order to prove Theorem \ref{t:CKN} we introduce the following further quantities:
\begin{align}
\fA(r) &:= \sup_{-r^{2\alpha} \leq t\leq 0} \frac{1}{r} \int_{B_r} |u|^2 (x,t)\, dx\\
\fC(r) &:= \frac{1}{r^{6-4\alpha}} \int_{Q_r} |u|^3\,dx\,dt\\
\fD(r) &:= \frac{1}{r^{6-4\alpha}} \int_{Q_r} |p|^{\frac{3}{2}}\,dx\,dt\, .
\end{align}
We will need the following interpolation inequality.%, which is already contained in the original work of Caffarelli, Kohn and Nirenberg \cite{CKN}. 

\begin{lemma}\label{lemma:CKN-supporto}
Let $u: L^{\infty}((0,T), L^2(\RR^3;\RR^3)) \cap L^2((0,T), H^{\alpha}(\RR^3;\RR^3))$. Then the following inequality holds for every $\rho, r \in \RR^+$ with $\rho \le r$:
\begin{equation}
\label{eqn:C-est}
\fC (\rho) \leq C\Big(\frac{\rho}{r}\Big)^{6\alpha-3} \fA(r)^\frac 32+ C \Big(\frac{r}{\rho}\Big)^{6-4\alpha}\fA(r)^\frac 34 \fB(r)^\frac 34 
\leq C \Big(\frac{\rho}{r}\Big)^{6\alpha-3} \fA(r)^{\frac{3}{2}} + C \Big(\frac{r}{\rho}\Big)^{9-2\alpha} \fB(r)^{\frac{3}{2}}\, .
\end{equation}
\end{lemma}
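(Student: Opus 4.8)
The plan is to run the classical Caffarelli--Kohn--Nirenberg interpolation: at each fixed time split $u$ into its spatial mean over $B_r$ and a mean-zero remainder, and estimate the two pieces separately. Concretely, for fixed $t$ write $u(\cdot,t)=[u]_{B_r}(t)+w(\cdot,t)$ on $B_r$, where $w:=u-[u]_{B_r}$ has zero average over $B_r$; then $|u|^3\le C\big(|[u]_{B_r}|^3+|w|^3\big)$, so $\fC(\rho)$ splits into a ``mean part'' and an ``oscillation part''.

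For the mean part, Cauchy--Schwarz and the definition of $\fA$ give $|[u]_{B_r}(t)|\le Cr^{-3/2}\|u(\cdot,t)\|_{L^2(B_r)}\le Cr^{-1}\fA(r)^{1/2}$, hence $\int_{B_\rho}|[u]_{B_r}(t)|^3\,dx\le C\rho^3r^{-3}\fA(r)^{3/2}$; integrating over a time interval of length $\rho^{2\alpha}$ and dividing by $\rho^{6-4\alpha}$ produces $C(\rho/r)^{6\alpha-3}\fA(r)^{3/2}$ (the residual powers of $r$ generated here have the right sign on the range $\rho\le r\le 1$, which is the one relevant for Theorem \ref{t:CKN}), i.e. the first term.

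For the oscillation part, at fixed $t$ I would use the Gagliardo--Nirenberg--Sobolev inequality on the ball, $\int_{B_r}|w|^3\le C\big(\int_{B_r}|w|^2\big)^{3/4}\big(\int_{B_r}|\nabla w|^2\big)^{3/4}+Cr^{-3/2}\big(\int_{B_r}|w|^2\big)^{3/2}$, and absorb the last summand into the first via Poincar\'e's inequality $\int_{B_r}|w|^2\le Cr^2\int_{B_r}|\nabla w|^2$ (legitimate since $w$ is mean-zero on $B_r$). Bounding $\int_{B_r}|w|^2\le\int_{B_r}|u|^2\le r\fA(r)$ and $|\nabla w|=|\nabla u|$ yields $\int_{B_\rho}|w(\cdot,t)|^3\le C\big(r\fA(r)\big)^{3/4}\big(\int_{B_r}|\nabla u(\cdot,t)|^2\big)^{3/4}$; H\"older in time on $(-\rho^{2\alpha},0)$ (the exponent $3/4$ against $1$, interval length $\rho^{2\alpha}$) then gives $C\big(r\fA(r)\big)^{3/4}\rho^{\alpha/2}\big(\int_{Q_r}|\nabla u|^2\big)^{3/4}$, and using $\int_{Q_r}|\nabla u|^2=r^{3-2\alpha}\fB(r)$ and dividing by $\rho^{6-4\alpha}$ gives $C(r/\rho)^{6-4\alpha}\fA(r)^{3/4}\fB(r)^{3/4}$, the second term.

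Finally, the last inequality in the chain follows by Young's inequality applied to $\fA(r)^{3/4}\fB(r)^{3/4}\le\lambda\fA(r)^{3/2}+\lambda^{-1}\fB(r)^{3/2}$, with $\lambda$ chosen as a suitable power of $\rho/r$ so that the $\fA^{3/2}$-contribution is reabsorbed into the term $C(\rho/r)^{6\alpha-3}\fA(r)^{3/2}$ already present; the leftover $\fB^{3/2}$-contribution then carries precisely the weight $(r/\rho)^{9-2\alpha}$. The only genuine difficulty here is the parabolic bookkeeping: since $\fA,\fB,\fC$ are not scale invariant under $u\mapsto r^{2\alpha-1}u(r\,\cdot,r^{2\alpha}\,\cdot)$ when $\alpha\neq 1$, one cannot simply normalize $r=1$, and one must check that all the auxiliary powers of $r$ produced above point the right way on the intended range $\rho\le r\le 1$; the Gagliardo--Nirenberg step on a ball (with its correction term) and the order of the spatial and temporal interpolations are otherwise routine.
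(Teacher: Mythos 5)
Your proposal is correct and follows essentially the same route as the paper: the same splitting of $u$ into its spatial mean and a mean--zero remainder, the mean term absorbed into $\fA(r)^{3/2}$, the oscillation term handled by an $L^2$--$L^6$ interpolation with Sobolev--Poincar\'e in space and H\"older in time to produce $\fA(r)^{3/4}\fB(r)^{3/4}$, and finally Young's inequality with a weight that is a power of $\rho/r$ to obtain the exponent $9-2\alpha$. The only divergence is bookkeeping: the paper reduces to $r=1$ by invoking scale invariance (which for $\fA$ is only true up to a factor $r^{4\alpha-4}$, harmless for $r\le 1$), whereas you track the powers of $r$ directly and correctly observe that they close on the range $\rho\le r\le 1$ actually used in the proof of Theorem \ref{t:CKN}.
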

\begin{proof}
Since all the quantities are scaling-invariant, it is enough to prove the inequality for $r=1$. By the triangular inequality, we split $\fC(\rho)$ in two terms, namely
\[
\fC(\rho)=\frac{C}{\rho^{6-4\alpha}}\int_{Q_{\rho}}|u|^3 \, dx \,dt \le C\rho^{4\alpha-3}\int_{-\rho^{2\alpha}}^0\left|[u]_1(\tau)\right|^3\,d\tau+\frac{C}{\rho^{6-4\alpha}}\int_{Q_{\rho}}|u(x,\tau)-[u]_1(\tau)|^3\,dx\,d\tau.
\]
We estimate the first summand in the right hand side
\begin{align*}
\rho^{4\alpha-3}\int_{-\rho^{2\alpha}}^0\left|\mean{B_1}u(x,\tau)\,dx\right|^3\,d\tau & \le C\rho^{6\alpha-3}\sup_{-\rho^{2\alpha}\le t\le 0}\left(\int_{B_1}|u|^2(x,t)\,dx\right)^\frac 32 
%\\
%& \le C\frac{\rho^3}{r^3}\left(\sup_{-r^2\le t\le 0}\frac 1r\int_{B_r}|u|^2(x,t)\,dx\right)^\frac 32=
\le C\rho^{6\alpha-3} \fA(1)^\frac 32.
\end{align*}
For the second summand, we firstly notice that, by H\"older's and Sobolev's inequalities,
\begin{align}\label{eqn:annalisa}
\int_{Q_{\rho}}|u(x,\tau)-[u]_1(\tau)|^3\,dx & \le \int_{Q_1}|u(x,\tau)-[u]_1(\tau)|^3\,dx\\
& \le \int_{-1}^0 \left(\int_{B_1}|u(x,\tau)-[u]_1(\tau)|^2\,dx\right)^\frac 34\left(\int_{B_1}|u(x,\tau)-[u]_1(\tau)|^6\,dx\right)^\frac 14 \, dt \\
& \le C \fA(1)^\frac 34\int_{-1}^0 \left(\int_{B_1}|Du|^2(x,\tau)\,dx\right)^\frac 34\, dt
\le C \fA(1)^\frac 34 \fB(1)^\frac 34
.
\end{align}
Putting together the last three estimates, we reach the first inequality in \eqref{eqn:C-est}; the second inequality follows by Young inequality.
\end{proof}

\begin{proof}[Proof of Theorem \ref{t:CKN}] The key point is that there are constants $\theta>0$, $C>0$ and $\omega_0$ such that,
if
\begin{equation}\label{e:condizione_omega}
\fB(r) + \fF(r) + \fT(r) = \omega < \omega_0\, ,
\end{equation}
then
\begin{equation}\label{e:tesi_omega}
\fA (\theta r)^{\frac{3}{2}} + \fD (\theta r)^2 \leq \frac{1}{2} (\fA(r)^{\frac{3}{2}} + \fD(r)^2) + C \omega^{\frac{3}{2}}\, .
\end{equation}
Once this claim is proved, we combine it with the estimate of Lemma~\ref{lemma:CKN-supporto} with $\rho=r$, namely
\[
\fC(r) \leq C \fA(r)^{\frac{3}{4}} \fB(r)^{\frac{3}{4}} + C \fA(r)^{\frac{3}{2}}\, .
\]
We can then apply Lemma \ref{l:BFT_small} and
we easily conclude that, if $\varepsilon$ is as in Theorem \ref{thm:eps_reg-variant}, for a suitably small $\delta$, the condition
\[
\limsup_{r\downarrow 0} \fE (r) < \delta
\]
implies
\[
\limsup_{r\downarrow 0} \left(\fC(r) + \fD(r) + \fT(r)^{\frac{1}{2}}\right) < \varepsilon\, .
\]
In particular, if $2r$ is a radius at which 
\[
\fC(2 r) + \fD(2 r) + \fT(2 r)^{\frac{1}{2}} < \varepsilon\, ,
\]
with an obvious scaling argument, we can apply Theorem \ref{thm:eps_reg-variant} to $u_r$ and $p_r$, hence concluding that the latter are H\"older continuous in $Q_1$. This, however, implies that $u$ and $p$ are H\"older continuous in $Q_r$, i.e., $(0,0)$ is a regular point. We are thus left with proving \eqref{e:tesi_omega}. 

\medskip

{\it Step 1: energy inequality.}
Without loss of generality we show \eqref{e:tesi_omega} with $r=1$ assuming \eqref{e:condizione_omega} with $r=1$. From the energy inequality in Lemma~\ref{lemma:suitable} applied with $M=0$ and $f(t) = |[u]_1|^2$ we conclude
\begin{equation}\label{e:90}
\fA (\theta) \leq \frac{C}{\theta^2} \int_{Q_{2\theta}} \left(|u-[u]_1||u+[u]_1| +|p||u|\right) + c (\theta) \int_{Q_{2\theta}} |u|^2 
+ c (\theta) \int_{Q_{2\theta}^*} y^b |u^*|^2\, .
\end{equation}
For the term involving the pressure, we use the inequality
\[
\frac{1}{\theta^2}\int_{Q_{2\theta}}|p||u|
\leq \frac{C}{\theta^2}
\left(\int_{Q_{2\theta}} |p| ^{\frac{3}{2}} \right)^{\frac{2}{3}}\left(\int_{Q_{2\theta}}|u|^{3} \right)^{\frac{1}{3}} = C \theta^{4\alpha-4}\fC (2\theta)^{\frac{1}{3}}\fD(2\theta)^{\frac{2}{3}}
 \leq C \theta^{8-8\alpha} \fC (2\theta)^{\frac{2}{3}} + C \fD(2\theta)^{\frac{4}{3}}\, .
\]
Using this inequality and Lemma~\ref{lemma:suitable} and \eqref{e:condizione_omega} to control the last two terms in \eqref{e:90}, we conclude
\begin{equation}\label{en-ineq-applied}
\fA (\theta) \leq \frac{C}{\theta^2} \int_{Q_{2\theta}} |u-[u]_1||u+[u]_1| |u| + C \theta^{8-8\alpha} \fC (2\theta)^{\frac{2}{3}} + C \fD(2\theta)^{\frac{4}{3}}+ c (\theta) \omega\, .
\end{equation}

\medskip
{\it Step 2: estimate of the nonlinear term in \eqref{en-ineq-applied}.}
By H\"older and Sobolev inequality we have 
\begin{align*}
&\int_{Q_{2\theta}} |u-[u]_1| |u + [u]_1| |u|\, dx \, dt \le  \left( \int_{Q_{2\theta}} |u|^{3}\, dx \, dt \right)^{\frac 13}  \left(
\int_{Q_1} |u-[u]_1|^{\frac 32} |u + [u]_1|^{\frac 32}\, dx \, dt\right)^{\frac 23}
\\&\le {C\theta^{2-\frac2 3 \alpha}} \fC(2\theta)^{\frac 13}  \left(
\int_{-1}^0 \left(\int_{B_1} |u-[u]_1|^{6}\, dx \right)^{\frac 14} \left( \int_{B_1} |u + [u]_1|^{2}\, dx \right)^{\frac 34} \, dt\right)^{\frac 23}
\\&\le {C\theta^{2-\frac2 3 \alpha}} \fC(2\theta)^{\frac 13}  \left(
\int_{-1}^0 \left(\int_{B_1} |Du|^{2}\, dx \right)^{\frac 34} \left( \int_{B_1} |u|^{2}\, dx \right)^{\frac 34} \, dt\right)^{\frac 23}
\\&\le {C\theta^{2-\frac2 3 \alpha}} \fC(2\theta)^{\frac 13}  \left(
\int_{-1}^0 \left(\int_{B_1} |Du|^{2}\, dx \right)^{\frac 34} \fA(1)^{\frac 34} \, dt\right)^{\frac 23}
\\&\le {C\theta^{2-\frac2 3 \alpha}} \fC(2\theta)^{\frac 13} \fA(1)^{\frac{1}{2}} \fB(1)^{\frac{1}{2}}
%\\&\le {C\theta^{2-\frac2 3 \alpha}} \fC(2\theta)^{\frac 13} \fA(1)^{\frac{1}{2}} \fB(1)^{\frac{1}{2}}
 \, .
\end{align*}

%we estimate (using Sobolev)
%\[
%(II) \leq C \theta^{-\lambda}   C (2\theta)^{\frac{1}{3}} \left(\int_{-(2\theta)^{2\alpha}} \left(\int_{B_1} |Du|^2\right)^{\frac{3}{4}}
%A(1)^{\frac{3}{4}}   \right)^{\frac{2}{3}} \leq C \theta^{-\lambda} C (2\theta)^{\frac{1}{3}} A(1)^{\frac{1}{2}} B(1)^{\frac{1}{2}} \, .
%\]
%Combining with the estimate for $\fC(2\theta)$ we can easily conclude
%\[
%\fA (\theta)^{\frac{3}{2}} \leq C \theta^{6\alpha-3}\fA (1)^{\frac{3}{2}} + \frac{1}{4} \fA(1)^{\frac{3}{2}} + \fC(2\theta) \omega^{\frac{3}{2}}
%+ C \fD (2\theta)^2\, .
%\] 
{\it Step 3: estimate on the pressure in \eqref{en-ineq-applied}.} We claim the following estimate for $\fD(2\theta)$:
\begin{equation}\label{e:stima_D2}
\fD (2\theta) \leq C \frac{1}{\theta^{6-4\alpha}} \fA(1)^{\frac{3}{4}} \fB(1)^{\frac{3}{4}} + C \theta^{4\alpha-3} \fD(1)\, .
\end{equation}
The proof uses the usual elliptic equation for the pressure.
Indeed, recalling that $u$ is divergence-free, we see that the pressure $p$ solves
\begin{align*}
\Delta p={\rm div}\,{\rm div}\,(u\otimes u)=\sum_{i,j}\partial_i\partial_j(u_iu_j)=\sum_{i,j}\partial_i\partial_j\left((u_i-[u_i]_1)(u_j-[u_j]_1)\right).
\end{align*}
Let $\chi_{B_1}$ being the characteristic function of the ball $B_1$
and consider the potential theoretic solution $\overline p$ of
\[
\Delta\overline p=\sum_{i,j}\partial_i\partial_j\left((u_i-[u_i]_1)(u_j-[u_j]_1)\chi_{B_1}\right)\, .
\]
The difference $\overline p-p$ is harmonic in $B_1$. Therefore
\begin{align*}
\int_{B_{2\theta}}|\overline p(x,\tau)-p(x,\tau)|^\frac 32dx&\le \theta^3\|\overline p(x,\tau)-p(x,\tau)\|_{L^\infty(B_{1/2})}^\frac 32
\le c \theta^{3}\int_{B_1}|\overline p(x,\tau)-p(x,\tau)|^\frac 32dx\\
&\le c {\theta}^3\int_{B_1}|p|^\frac 32(x,\tau)\,dx+c{\theta}^3\int_{B_1}|\overline p|^\frac 32(x,\tau)\,dx.
\end{align*}
Hence, by the previous inequality and the Calder\'on-Zygmund estimates on $\overline p$, we conclude
\begin{align*}
\frac{1}{\theta^{6-4\alpha}}\int_{Q_{2\theta}}|p|^\frac 32(x,\tau)\,dx\,d\tau
&\le c \theta^{4\alpha-3} \fD(1)+\frac{c}{\theta^{6-4\alpha}}\int_{Q_1}|\overline p|^\frac 32(x,\tau)\,dx\,d\tau\\
&\le c \theta^{4\alpha-3}\fD(1)+\frac{c}{\theta^{6-4\alpha}}\int_{Q_1}\left|u(x,\tau)-[u]_1\right|^3\,dx\,d\tau.
\end{align*}
Using again \eqref{eqn:annalisa}, \eqref{e:stima_D2} follows.

{\it Step 4: conclusion.}
Taking the power $3/2$ of \eqref{en-ineq-applied} and noticing that $\fD(\theta) \leq C \fD(2\theta)$, we obtain
\begin{equation}\label{en-ineq-applied-new}
\fA (\theta)^{\frac 32} +\fD(\theta)^2 \leq \frac{C}{\theta^3}\Big( \int_{Q_{2\theta}} |u-[u]_1||u+[u]_1| |u| \Big)^{\frac 32}+ C \theta^{12-12\alpha} \fC (2\theta) + C \fD(2\theta)^{2}+ c (\theta) \omega^{\frac 32}\, .
\end{equation}
{Applying Step 2 and Step 3
\begin{align}
\fA (\theta)^{\frac 32} +\fD(\theta)^2 & \leq \frac{C}{\theta^{\alpha}} \fC(2\theta)^{\frac 12} \fA(1)^{\frac{1}{2}} \fB(1)^{\frac{1}{2}}+ C \theta^{12-12\alpha} \fC (2\theta) + C \fD(2\theta)^{2}+ c (\theta) \omega^{\frac 32}\nonumber\\
&\leq \frac{C}{\theta^{\alpha}} \fC(2\theta)^{\frac 12} \fA(1)^{\frac{3}{4}} \fB(1)^{\frac{3}{4}} + C \theta^{12-12\alpha} \fC (2\theta) + \frac{C}{\theta^{6-4\alpha}} \fA(1)^{\frac{3}{4}} \fB(1)^{\frac{3}{4}} + 
C \theta^{4\alpha -3} \fD (1) + c (\theta) \omega^{\frac 32}\, .\nonumber
\end{align}
We can easily bound
\[
\frac{C}{\theta^{6-4\alpha}}  \fA(1)^{\frac{3}{4}} \fB(1)^{\frac{3}{4}} \leq \frac{1}{8} \fA (1)^{\frac{3}{2}} + c(\theta) \omega^{\frac{3}{2}} 
\]
and
\[
\frac{C}{\theta^{\alpha}} \fC(2\theta)^{\frac 12} \fA(1)^{\frac{1}{2}} \fB(1)^{\frac{1}{2}}\leq
C \theta^{12-12\alpha} \fC (2\theta) + \frac{1}{8} \fA (1)^{\frac{3}{2}}\, ,
\]
(since we can assume without loss of generality that $\fB (1) \leq \omega_0 \leq 1$).
We therefore achieve
\[
\fA (\theta)^{\frac 32} +\fD(\theta)^2 \leq C \theta^{12-12\alpha} \fC (2\theta) + \frac{1}{4} \fA (1)^{\frac{3}{2}}  + c (\theta) \omega^{\frac{3}{2}}\, .
\]
We now use Lemma \ref{lemma:CKN-supporto} to derive
\[
\fC (2\theta) \leq C \theta^{6\alpha -3} \fA(1)^{\frac{3}{2}} + C \theta^{4\alpha -3} \fD (1)^2 + c (\theta) \omega^{\frac{3}{2}}\, .
\]
In particular we conclude
\[
\fA (\theta)^{\frac 32} +\fD(\theta)^2 \leq \left(\frac{1}{4} + C \theta^{9-6\alpha}\right) \fA (1) + C \theta^{4\alpha -3} \fD (1)^2 + c (\theta) \omega^{\frac{3}{2}}\, .
\]
Since $9-6\alpha$ and $4\alpha -3$ are both positive,
we just need to choose $\theta$ sufficiently small to conclude \eqref{e:tesi_omega}. }
\end{proof}

\section{Proofs of Theorems \ref{thm-intro:sing-set-dim}, \ref{thm:eps_reg-massimale} and Corollaries \ref{c:box},\ref{c:stab}, \ref{c:reg}}\label{s:final}
\subsection{Dimension of the singular set}

\begin{definition}
Given a parabolic cylinder 
\[
Q_r(x,t)=B_r(x)\times]t_0-r^{2\alpha},t_0]=B_r(x)\times \left]\left(t-\frac{r^{2\alpha}}{2}\right)-\frac{r^{2\alpha}}{2},\left(t-\frac{r^{2\alpha}}{2}\right)+\frac{r^{2\alpha}}{2}\right]
\]
and $\lambda>0$, we define its dilation (computed with respect to its centroid)
\[
\lambda Q_r(x,t):=B_{\lambda r}(x)\times \left]\left(t-\frac{r^{2\alpha}}{2}\right)-\frac{(\lambda r)^{2\alpha}}{2},\left(t-\frac{r^{2\alpha}}{2}\right)+\frac{(\lambda r)^{2\alpha}}{2}\right]\,.
\]
\end{definition}
Notice that these parabolic cylinders have the ``Vitali property'' with $\lambda=5$, namely
\[
Q_{r}(x_1,t_1)\cap Q_{2r}(x_2,t_2)\neq\emptyset\quad\Longrightarrow\quad Q_{2r}(x_2,t_2)\subset  5Q_r(x_1,t_1)\,.
\]
In particular we recover the classical $5r$-covering lemma. 

\begin{lemma}[Vitali's covering theorem for cylinders]\label{thm:vitali}
Let ${\mathcal F}\subset ]0,+\infty[\times\RR^3\times\RR$ be a family of parameters for the closed parabolic cylinders in ${\mathcal Q}=\{\overline{Q_r(x,t)}:\,(r,x,t)\in {\mathcal F}\}$ and assume $\sup_{\mathcal F}r<\infty$.
Then there exists a countable family ${\mathcal G}\subset{\mathcal F}$ which consists of disjoint cylinders and such that
\[
\bigcup_{(r,x,t)\in\mathcal F} \overline{Q_r(x,t)}\subset\bigcup_{(r,x,t)\in{\mathcal G}}\overline {5 Q_r(x,t)}\,.
\]
\end{lemma}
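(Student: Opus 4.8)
The plan is to reproduce the classical $5r$-covering argument, stratified by dyadic scales of the radius and fed by the ``Vitali property'' quoted just above. First I would set $R := \sup_{\mathcal F} r$, which is finite by hypothesis (if $\mathcal F$ is empty there is nothing to prove), and decompose $\mathcal F = \bigcup_{j\ge 1}\mathcal F_j$ with $\mathcal F_j := \{(r,x,t)\in\mathcal F : R2^{-j} < r \le R2^{-(j-1)}\}$. The point of this stratification is that if $(r,x,t)\in\mathcal F_j$ and $(r',x',t')\in\mathcal F_i$ with $i\le j$, then $r\le R2^{-(j-1)} < 2R2^{-j} < 2r'$, i.e.\ $r<2r'$, which is exactly the gap the Vitali property can absorb.

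Next I would build $\mathcal G = \bigcup_{j\ge 1}\mathcal G_j$ recursively: given $\mathcal G_1,\dots,\mathcal G_{j-1}$, let $\mathcal H_j\subseteq\mathcal F_j$ be the set of those $(r,x,t)$ for which $\overline{Q_r(x,t)}$ is disjoint from every cylinder of $\mathcal G_1\cup\dots\cup\mathcal G_{j-1}$, and let $\mathcal G_j\subseteq\mathcal H_j$ be \emph{maximal} --- by Zorn's lemma, using that the union of a chain of pairwise disjoint subfamilies is again pairwise disjoint --- subject to the closed cylinders of $\mathcal G_j$ being pairwise disjoint. By construction the whole family $\mathcal G$ is pairwise disjoint (within a single $\mathcal G_j$ this is the maximality constraint, for $i<j$ it is built into the definition of $\mathcal H_j$), and it is at most countable because the interiors of its cylinders (nonempty since $r>0$) are pairwise disjoint open subsets of $\RR^4$, hence each contains a point of a fixed countable dense set.

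Finally I would verify the covering. Given $(r,x,t)\in\mathcal F_j$: if $(r,x,t)\notin\mathcal H_j$ then $\overline{Q_r(x,t)}$ meets some $\overline{Q_{r'}(x',t')}$ with $(r',x',t')\in\mathcal G_i$, $i<j$; if $(r,x,t)\in\mathcal H_j\setminus\mathcal G_j$ then maximality of $\mathcal G_j$ forces $\overline{Q_r(x,t)}$ to meet some $\overline{Q_{r'}(x',t')}$ with $(r',x',t')\in\mathcal G_j$; and if $(r,x,t)\in\mathcal G_j$ the inclusion $\overline{Q_r(x,t)}\subseteq\overline{5Q_r(x,t)}$ is trivial. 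In the two nontrivial cases $r' > R2^{-j}$ and $r<2r'$, so $\overline{Q_r(x,t)}\subseteq\overline{Q_{2r'}(x,t)}$ and $\overline{Q_{2r'}(x,t)}\cap\overline{Q_{r'}(x',t')}\ne\emptyset$; the Vitali property (in the evident closed-cylinder version, obtained by the same estimates) then yields $\overline{Q_r(x,t)}\subseteq\overline{Q_{2r'}(x,t)}\subseteq\overline{5Q_{r'}(x',t')}$ with $(r',x',t')\in\mathcal G$. This is precisely the asserted inclusion.

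The only points needing genuine care are the appeal to Zorn's lemma for the maximal disjoint subfamilies $\mathcal G_j$ (necessary since $\mathcal F$ may be uncountable) and the fact that the Vitali property really holds for the anisotropic cylinders $Q_r$ with the centroid-dilation convention --- the latter reducing to the elementary estimates $|x'-x|<3r'$ for intersecting balls together with $2^{2\alpha+1}+1\le 5^{2\alpha}$ (valid for $\alpha\ge1$) controlling the time intervals. I expect the ``main obstacle'' to be purely organisational: arranging the dyadic bookkeeping so that any two intersecting cylinders at the same or a coarser scale automatically satisfy $r<2r'$, after which everything is routine.
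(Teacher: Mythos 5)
Your proof is correct and takes exactly the route the paper intends: the paper states the lemma without proof, relying on the Vitali property with $\lambda=5$ recorded just before it, and your dyadic stratification, Zorn-maximal disjoint subfamilies at each scale, and application of the centroid-dilation Vitali property (in its closed-cylinder form, with the estimates $|x'-x|\le 3r'$ and $2^{2\alpha+1}+1\le 5^{2\alpha}$) are precisely the classical $5r$-covering argument being invoked. One inconsequential slip: $R2^{-(j-1)}=2R2^{-j}$ rather than $<$, but the conclusion $r<2r'$ is unaffected.
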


\begin{proof}[Proof of Theorem~\ref{thm-intro:sing-set-dim}] Let $\eps>0$ be the threshold given by Theorem~\ref{t:CKN}.
Fix $\delta>0$ and let
\[
{\mathcal Q}_\delta:=\left\{\overline{Q_r(x,t)}:\,r<\delta\text{ and }\int_{Q^*_r(x,t)} y^{b}|\nabla (\nabla u)^\flat |^2\ge\varepsilon r^{5-4\alpha}\right\}\,.
\]
Observe that, by Theorem \ref{t:CKN}, the set of points $(x,t)\in\RR^3\times\RR$ such that $\overline{Q_r(x,t)}\in {\mathcal Q}_\delta$ for some $r$ contains ${\rm Sing}\, u$. Then, by Lemma \ref{thm:vitali}, there exists a countable family $\{(r_k,x_k,t_k)\}_{k\ge 1} \subset {\mathcal Q}_\delta$ such that 
\[
{\rm Sing}\, u\subset \bigcup_{k\ge 1} \overline{5Q_{r_k}(x_k,t_k)}\,.
\]
Thus, since the $Q_{r_k}^*(x_k,t_k)$ are pairwise disjoint, if we set $R_\delta:=\bigcup_{k\ge 1} Q^*_{r_k}(x_k,t_k)$, then we can estimate
\begin{equation}\label{e:squeeze_sing}
\PHaus{5-4\alpha}_\delta\left({\rm Sing}\, u\right)\le \sum_{k\ge 1}r^{5-4\alpha}_k\le\sum_{k\ge 1}\frac{1}{\varepsilon}\int_{Q^*_{r_k}(x_k,t_k)}y^{b}|\nabla(\nabla u)^\flat|^2\le\frac{1}{\varepsilon}\int_{R_\delta}y^{b}|\nabla(\nabla u)^\flat|^2\,.
\end{equation}
Moreover $R_\delta\subset \RR^3\times[0,\delta]\times[0,+\infty)$; therefore, by absolute continuity of the integral (with respect to the finite measure $y^{b}|\nabla(\nabla u)^\flat|^2\,dx\,dy\,dt$),
\[
\lim_{\delta\to 0}\int_{R_\delta}y^{b}|\nabla(\nabla u)^\flat|^2=0.
\]
Thus we conclude \eqref{est:dim_singset} from \eqref{e:squeeze_sing}.
\end{proof}

\begin{proof}[Proof of Corollary \ref{c:box}] Let $\alpha \in  (1, \frac{5}{4})$ and let $u$ be a suitable weak solution of \eqref{e:NS_alfa}. Fix $t>0$ and set $S:= {\rm Sing}\, u\cap (\RR^3\times [t, \infty))$. We recall the definition of the box-counting dimension ${\rm Dim}_b\, (S)$: for every fixed $\delta\in (0,1)$ we consider the minimal number $N (\delta)$ of sets of diameter $\delta$ which are needed to cover $S$ and we set
\[
{\rm Dim}_b\, (S) = \limsup_{\delta\downarrow 0}  (- \log_\delta (N (\delta)))\, .
\]
Fix therefore $\delta \in (0, \min (t^{\sfrac{1}{2\alpha}}, 1))$. We will indeed estimate the minimal number $N' (\delta)$ of $Q_\delta (x_i, t_i)$ which are needed to cover $S$, because it is easy to see that
\[
{\rm Dim}_b\, (S) \leq \limsup_{\delta\downarrow 0} (- \log_\delta (N' (\delta)))\, .
\]
Recall that $u\in L^\infty ((0,\infty), L^2(\RR^3)) \cap L^2 ((0,\infty), H^\alpha(\RR^3))$. In particular, by Sobolev's embedding 
$u\in L^2 ((0, \infty), L^{\frac{6}{3-2\alpha}}(\RR^3))$ and, by interpolation, $u\in L^{\frac{6+4\alpha}{3}} (\RR^3\times (0, \infty))$. Using the Calder\'on--Zygmund estimates and the usual maximal function estimates we then get
\[
M := \int_{\RR^3\times (0, \infty)} \left(\mathcal{M} |u|^2 + p \right)^{\frac{3+2\alpha}{3}} < \infty\, .
\]
Next, by H\"older's inequality, 
\[
\frac{1}{r^{6-4\alpha}} \int_{Q_r (y,s)} \left(\mathcal{M} |u|^2 + p\right)^{\frac{3}{2}}
\leq \left(\frac{1}{r^{\frac{15-2\alpha -8\alpha^2}{3}}} \int _{Q_r (y,s)} \left(\mathcal{M} |u|^2 + p\right)^{\frac{3+2\alpha}{3}}\right)^{\frac{9}{6+4\alpha}}\, .
\]
In particular, by Theorem \ref{thm:eps_reg-massimale}, if $(y,s)\in S$, then
\[
\int_{Q_r (y,s)} \left(\mathcal{M} |u|^2 + p\right)^{\frac{3+2\alpha}{3}} \geq \varepsilon^{\frac{6+4\alpha}{9}} r^{\frac{15-2\alpha -8\alpha^2}{3}} \qquad \forall r\in (0, s^{\frac{1}{2\alpha}})\, .
\]
Assume, therefore, that $\{Q_{\delta/5} (x_i, t_i)\}_i$ is a (at most) countable cover of $S$ with $(x_i, t_i)\in S$. Using Lemma \ref{thm:vitali}, there is a subset $A\subset \NN$ such that $\{5 Q_{\delta/5} (x_i, t_i)\}_{i\in A}$ covers $S$ and 
$\{Q_{\delta/5} (x_i, t_i)\}_{i\in A}$ consists of disjoint cylinders. We thus conclude that
\[
N' (\delta) \leq \sharp A \leq \frac{M}{\varepsilon^{\frac{6+4\alpha}{9}} r^{\frac{15-2\alpha -8\alpha^2}{3}}}\, ,
\]
which in turn implies
\[
- \log_\delta (N' (\delta)) \leq - \log_\delta \left(\frac{M}{\varepsilon^{\frac{6+4\alpha}{9}}}\right) + \frac{15-2\alpha -8\alpha^2}{3}\, .
\]
In particular, letting $\delta\downarrow 0$ we reach
\[
{\rm Dim}_b\, (S) \leq \limsup_{\delta\downarrow 0} (- \log_\delta (N' (\delta))) \leq \frac{15-2\alpha -8\alpha^2}{3}\, .\qedhere
\]
\end{proof}

\subsection{$\eps$-regularity with the maximal function}
\begin{proof}[Proof of Theorem~\ref{thm:eps_reg-massimale}]

%\noindent{\bf Step 1.} Let $\varepsilon_0$ be a constant (to be chosen later) which is smaller by the one given in Proposition \ref{prop:exc_decay} for $M=1$, allowing the decay of the excess. T
We claim that for every  $\varepsilon_0$ there exists $\varepsilon>0$ such that, if \eqref{eccessivoealternativo} holds, then 
\begin{equation}\label{e:exc_start_small}
E(u,p;x_0,t_0,1)\le\varepsilon_0\quad\text{ for every }(x_0,t_0)\in Q_1.
\end{equation} 
Once this claim is proved, we use the argument of Step 2 in the proof of Theorem~\ref{thm:eps_reg-variant} and we choose $\varepsilon_0$ sufficiently small to deduce the decay of the excess \eqref{e:decay_fin} and that $u\in C^{0,\alpha}\left(Q_1\right)$ from Morrey's theorem.

To show \eqref{e:exc_start_small}, we observe that two o the three terms in the definition of the excess have been estimated in \eqref{e:E-V-control} and \eqref{e:E-P-control}:  thanks to \eqref{eccessivoealternativo} we achieve
\[
E^V(u;x_0,t_0,1) + E^P(p;x_0,t_0,1)\le C\left(\mean{Q_2(x_0,t_0)}|u|^3\right)^\frac 13 + \le C\left(\mean{Q_2(x_0,t_0)}|p|^{3/2}\right)^\frac 23 \le C^V\varepsilon^\frac 13 + C^P\varepsilon^\frac 23.
\]
Concerning the nonlocal part, we have
\[
\left(\int_{t_0-1}^{t_0}\sup_{R\ge\fueta}R^{-3\alpha} \mean{B_R(x_0)} |u-(u)_{Q_1(x_0,t_0)}|^2\right)^\frac 12
\le \left(\int_{t_0-1}^{t_0}\sup_{R\ge\fueta}R^{-3\alpha} \mean{B_R(x_0)} |u|^{2}\right)^\frac 12+C|(u)_{Q_1(x_0,t_0)}|\, .
\]
The second summand in the right hand side is estimated by $|(u)_{Q_1(x_0,t_0)}|\le C\|u\|_{L^3(Q_1(x_0,t_0))}$.
Concerning the first summand, for every $R\ge\fueta$ and $t\in [t_0-1,t_0]$, we have that
\begin{equation*}
\mean{B_R(x_0)} |u|^2 \le C\int_{B_\fueta(x_0)}\mean{B_{R+\fueta}(x)} |u|^2\,dx'\,dx\le C\int_{B_\fueta(x_0)}{\mathcal M}|u|^2(x) \, dx.
\end{equation*}
Hence the nonlocal excess can be controlled by suitable norm of the maximal function of $|u|^2$ and by  \eqref{eccessivoealternativo}
\[
E^{nl}(u;x_0,t_0,1)\le C\left(\int_{t_0-1}^{t_0} \int_{B_\fueta(0)}{\mathcal M}|u|^2\right)^\frac 12+ C\left(\int_{Q_1}|u|^3\right)^\frac 13\\
\le C\left(\int_{Q_2} \left({\mathcal M}|u|^2\right)^{\frac{3}{2}}\right)^\frac 13\le C^{nl}\varepsilon^\frac 13.
\]
Provided we choose $\varepsilon$ small enough to satisfy $C^{nl}\varepsilon^\frac 13+C^V\varepsilon^\frac 13+C^P\varepsilon^\frac 23\le\varepsilon_0$, we obtain \eqref{e:exc_start_small}.
\end{proof}

\subsection{Stability of the regular set} 

\begin{proof}[Proof of Corollary \ref{c:stab}] Following the proof of Lemma \ref{lem:lions} we conclude easily that $\{u_k\}$ is strongly precompact in $L^3 (\mathbb R^3 \times [0,T])$ and $\{p_k\}$ is strongly precompact in $L^{3/2} (\mathbb R^3\times [0,T])$. In particular, by classical estimates on the 
maximal function, $\mathcal{M} |u_k|^2$ converges strongly in $L^{3/2}$ to $\mathcal{M} |u|^2$. On the other hand, $\mathcal{M} |u|^2\in L^\infty (Q_{2r} (x_0, t_0))$. In particular, there is a $\rho>0$ sufficiently small such that, for any $(x,t)\in Q_r (x_0, t_0)$,
\[
\frac{1}{\rho^{6-4\alpha}} \int_{Q_\rho (x,t)} \left(\mathcal{M} |u|^2 + |p|\right)^{\sfrac{3}{2}} \leq \frac{\varepsilon}{2}\, ,
\] 
where $\varepsilon$ is the constant of Theorem \ref{thm:eps_reg-massimale}. Thus, for $k$ large enough, we have
\[
\frac{1}{\rho^{6-4\alpha}} \int_{Q_\rho (x,t)} \left(\mathcal{M} |u_k|^2 + |p_k|\right)^{\sfrac{3}{2}} \leq \varepsilon \qquad\mbox{for every $(x,t)\in Q_\rho (x_0, t_0)$.}
\]
In particular Theorem \ref{thm:eps_reg-massimale} implies that for every such $k$ every point $(x,t)\in Q_\rho (x_0, t_0)$ is regular.
\end{proof}

\begin{proof}[Proof of Corollary~\ref{c:reg}] We first show the regularity of the solution. We argue by contradiction and assume therefore that the statement is false: we conclude that there is a sequence of initial data $\{u_0^k\}_k\subset Y$, a sequence $\alpha_k\uparrow \frac{5}{4}$ and a corresponding sequence of suitable weak solutions $(u_k, p_k)$ of \eqref{e:NS_alfa} with $\alpha = \alpha_k$ and $u_k  (\cdot, 0) = u^k_0$ such that none of the $u_k$ is regular. First of all, arguing as above, we can assume that, up to subsequences $(u_k, p_k)$ converge to a solution $(u,p)$ of \eqref{e:NS_alfa} with $\alpha = \frac{5}{4}$ and initial data $u (\cdot, 0)\in \overline{Y}$. Moreover we conclude the strong convergence of $\{u_k\}$ and $\{p_k\}$ respectively  in $L^3 (\mathbb R^3 \times [0,T])$ and $L^{\sfrac{3}{2}} (\mathbb R^3 \times [0,T])$ for every $T$. Note in particular that, by standard arguments, if
$|x|+|t|$ is larger than some fixed constant $M$, 
\[
\int_{Q_2 (x,t)} \left(\mathcal{M} |u_k|^2 + |p_k|\right)^{\sfrac{3}{2}} \leq \varepsilon\, .
\]
Thus, any singular point for any solution $(u_k, p_k)$ is contained in $B_M \times [0, M]$ and $u_k$ is uniformly bounded in $L^\infty ((\RR^3\times (0, \infty))\setminus (B_M  \times [0,M])$. On the other hand, we also know that, for a universal $T_0>0$, the solutions $(u_k, p_k)$ are classical, and hence regular, on $\mathbb R^3\times [0, T_0]$. Thus, if $(x_k, t_k)$ is a singular point for $(u_k, p_k)$ we can assume, up to subsequences, that $(x_k, t_k)$ converges to $(x,t)\in \overline{B}_M  \times [T_0, M]$. Since $(u,p)$ is regular, 
Corollary \ref{c:stab} gives a contradiction. 

Observe that, as a corollary of the above argument, we can bound the $L^\infty$-norm of $u_k$ for $k$ sufficiently large. Thus, the weak-strong uniqueness statement of Theorem \ref{t:Leray} implies that, for $k$ sufficiently large, $u_k$ is also the unique Leray--Hopf weak solution with initial data $u^k_0$. 
\end{proof}

\appendix

\section{Existence and weak-strong uniqueness}

\subsection{Existence of Leray--Hopf weak solutions}\label{ss:Leray_ex}
In this section we prove the existence part of Theorem \ref{t:Leray}. Consider a family of standard mollifiers $\phi_\varepsilon$ in space and define the following approximation of the hyperdissipative system:
\begin{equation}\label{e:NS_alfa_moll}
\left\{
\begin{array}{l}
\partial_t u + (u\ast \phi_\varepsilon \cdot \nabla) u + \nabla p = - (-\Delta)^{\alpha} u\\ \\
\div u =0
\end{array}\right.
\end{equation}
with the initial condition
%\begin{equation}\label{e:Cauchy_moll}
$u (\cdot, 0) = u_0\ast \phi_\varepsilon$.
%\end{equation}
Very standard arguments show the local in time existence of a smooth solution of \eqref{e:NS_alfa_moll}, cf. for instance \cite{TangYu}. Denote by $(u_\varepsilon, p_\varepsilon)$ the corresponding pair and let $T$ be the maximal time of existence. Recall also that, by possibly subtracting a suitable function of time from $p_\varepsilon$, we can assume that $p_\varepsilon$ is the potential theoretic solution of 
\[
- \Delta p_\varepsilon = {\rm div}\, {\rm div}\, (u_\varepsilon \otimes u_\varepsilon)\, . 
\]
We first show that $T=\infty$. First of all note that, by a simple computation,
\[
\frac{d}{dt} \int |u_\varepsilon|^2 (x,t)\, dx = - 2 \int |(-\Delta)^{\sfrac{\alpha}{2}} u_\varepsilon|^2 (x,t)\, dx\, .
\]
In particular, if $T$ were finite, the $L^2$-norm of $u_\varepsilon$ would remain bounded up to time $T$. In turn, this ensures that
all $C^k$-norms of $u_\varepsilon \ast \phi_\varepsilon$ stay bounded. From linear theory and bootstrap arguments it follows then easily
that all $C^k$-norms of $u$ stay bounded as well and it is then easy to see that the solution can be continued after the time $T$.

Take a sequence $\varepsilon_k\downarrow 0$ and extract a subsequence, not relabeled, which converges weakly to $u\in L^2(\mathbb R^3\times [0,T])$.
Observe that $u_\varepsilon$ enjoy uniform estimates in $L^\infty ([0, \infty), L^2(\mathbb R^3))$ and $(-\Delta)^{\sfrac{\alpha}{2}} u_\varepsilon$ enjoy uniform estimates in $L^2 (\mathbb R^3 \times [0, \infty))$. In particular, for every fixed time $T$, $u_\varepsilon$ enjoy uniform estimates in
$L^2 ([0, T], H^1(\mathbb R^3))$ and thus, by interpolation% used frequently for solutions of the Navier--Stokes equations
, $u_\varepsilon$ is bounded uniformly in $L^{\sfrac{10}{3}} (\mathbb R^3\times [0,T])$ for every $T< \infty$. In turn, using Calder\'on--Zygmund estimates for the pressure $p_\varepsilon$ we conclude uniform estimates in $L^{\sfrac{5}{3}} (\mathbb R^3\times [0,T])$. We can thus extract a subsequence so that $p_{\varepsilon_k}\rightharpoonup p$ locally in $L^{\sfrac{5}{3}} (\mathbb R^3\times [0,T])$. We next show that $u_{\varepsilon_k}$ converges strongly in $L^2_{\rm loc} (\mathbb R^3\times [0,T])$,
which in turn will imply that it converges locally strongly in $L^p (\mathbb R^3\times [0,T])$ for every $p< \frac{10}{3}$. Such strong convergence implies that
\begin{itemize}
\item $p_{\varepsilon_k}$ converge strongly in any $L^q_{\rm loc} (\mathbb R^3\times [0,T])$ for $q<\frac{5}{3}$ (Calder\'on--Zygmund estimates);
\item $(u, p)$ is a weak solution of the hyperdissipative Navier--Stokes.
\end{itemize}
Finally, the energy inequality follows from the lower semicontinuity of the dissipative right hand side 
\[
\int |(-\Delta)^{\sfrac{\alpha}{2}} u_{\varepsilon_k}|^2 (x,t)\, dx
\]
because for a.e. $t$ we can find a further subsequence so that $u_{\varepsilon_k} (\cdot, t)$ converges strongly in $L^2$ to $ u(\cdot, t)$. 

The strong convergence in $L^2 (\RR^3 \times [0,T])$ follows from a classical Aubin--Lions type argument, which we include for the reader's convenience. In order to simplify the notation we denote $u_{\varepsilon_k}$ by $w_k$ and without loss of generality we fix a time $T<\infty$. 
First of all, by Sobolev embeddings, $\|w_k\|_{L^2([0,T],L^6(\RR^3))}\le C$. 
%We define
%\[
%A_{k,j}:=\|w_k-w_j\|_{L^2(\mathbb R^3 \times [0,T])}\,.
%\]

Let $\varepsilon > 0$ be given. 
We want to show that $\exists\,N\in\NN$ such that $\|w_k-w_j\|_{L^2(\mathbb R^3 \times [0,T])}<\varepsilon$ for every $k,j\ge N$. Fix a standard mollifier $\varphi_\delta$ in the variable $x$ and observe that for any $t\in[0,\infty)$
\[
\|w_k(\cdot,t)-w_k\ast\varphi_\delta(\cdot,t)\|_{L^2 (\mathbb R^3)}\le C \delta^{\alpha} \|(-\Delta)^{\sfrac{\alpha}{2}}w_k(\cdot,t)\|_{L^2 (\mathbb R^3)}\,.
\]
In particular, for a fixed, sufficiently small $\delta$ we achieve 
\begin{equation}\label{e:1/3}
\|w_k\ast \varphi_\delta - w_k\|_{L^2 (\RR^3\times [0,T])} < \frac{\varepsilon}{3} \qquad \forall k\, .
\end{equation}
Next, mollifying the equation for $w_k$, for $\ell=1,2,3$, we find
\[
\partial_t (w_k*\varphi_\delta)_\ell = - \sum_{i=1}^3 f_{i,\ell,k} * \partial_{x_i} \varphi_\delta - w_k * (-\Delta)^{\alpha} \varphi_\delta\, ,
\]
where the functions $f_{i,\ell, k}$ are given by
\[
f_{i,\ell,k} := { (w_k)_i \ast \phi_{\varepsilon_k}} (w_k)_\ell + p \delta_{i\ell}\, 
\]
and thus enjoy uniform $L^{\sfrac{5}{3}}(\RR^3\times [0,T])$ bounds. 
Using the estimate $\|\zeta * \varphi_\delta \|_{W^{1, \infty} (\RR^3)} \leq C (\delta) \|\zeta\|_{L^1 (\mathbb R^3)}$ for each time slice, we conclude a bound
\[
\int_0^T \|\partial_t w_k * \varphi_\delta (\cdot, t)\|^{\sfrac{5}{3}}_{W^{1, \infty} (\mathbb R^3)}\, dt \leq C (\delta)\, ,
\]
where $C (\delta)$ is a constant depending upon $\delta$ but independent of $k$. 

So we can regard $[0,T]\ni t \mapsto w_k*\varphi_\delta (\cdot, t)$ as a sequence of equicontinuous and equibounded curves taking values in $W^{1, \infty} (\RR^3)$. Let $B_R$ be a (closed) ball of $W^{1,\infty} (\RR^3)$ so that the images of $w_k*\varphi_\delta$ are all contained inside it. If we endow $B_R$ with the $\|\cdot\|_\infty$-norm, then we have a compact metric space $X$. Hence we can regard $[0,T]\ni t \mapsto w_k*\varphi_\delta (\cdot, t)$ as an equicontinuous and equibounded sequence in the compact metric space $X$. By the Ascoli--Arzel\`a theorem the sequence is then precompact. Since the limit is unique (namely $u * \varphi_\delta$), we can conclude that the sequence $w_k*\varphi_\delta$ converges uniformly on $\RR^3 \times [0,T]$. 

Thus there exists $N$ large enough such that 
\[
\|w_k\ast\varphi_\delta-w_j\ast\varphi_\delta\|_{L^2(\RR^3 \times [0,T])}<\frac{\varepsilon}{3}\qquad \mbox{for all $k,j\geq N$.}
\]
Therefore, combining the latter inequality with \eqref{e:1/3}, for $j,k\geq N$ we have
\begin{align*}
\|w_k-w_j\|_{L^2 (\RR^3\times [0,T])} &\leq \|w_k - w_k\ast \varphi_\delta\|_{L^2 (\RR^3 \times [0,T])} + 
\|w_k\ast \varphi_\delta - w_j \ast \varphi_\delta\|_{L^2 (\RR^3\times [0,T])}\\
&\quad  + \|w_j- w_j\ast \varphi_\delta\|_{L^2 (\RR^3\times [0,T])}
< \varepsilon\, .
\end{align*}
This completes the proof of the strong convergence of $w_k$ and hence the proof of the existence part of Theorem \ref{t:Leray}. 

\subsection{The energy equality for smooth solutions}
We give a formal justification of the definition of suitable weak solutions by showing that \eqref{eqn:suit-weak-tested} holds with equality for smooth solutions of the hyperdissipative Navier-Stokes equations. We multiply the NS equation by $u\varphi$ and integrate to get
\begin{multline}\label{e:fs_energy}
	\int_{\RR^3} \varphi(x,t) |u(x,t)|^2 \, dx -
	\int_0^t \int_{\RR^3} \Big[ |u|^2 \partial_t \varphi + (| u |^2 +2p) u \cdot \nabla \varphi \Big] \, dx \, ds=-\int_0^t  \int_{\RR^3}\varphi u \cdot (-\Delta)^\alpha u \,dx \, ds\,.
\end{multline}
We manipulate the right hand side with $t$ fixed. Recalling \eqref{calc:lapl_b}, we notice that
\begin{align}
\overline{\rm div}\left(\overline\nabla(\varphi u_i^{*})y^b\overline\Delta_b u_i^*\right) & 
%=\Delta(\varphi u_i^*)y^b\overline\Delta_b u_i^*+b\overline\nabla(\varphi u_i^*)y^{b-1}\hat y \overline\Delta_b u_i^*+\overline\nabla(\varphi u_i^*)y^b\overline\nabla\overline\Delta_b u_i^*\nonumber\\
= \overline\Delta_b(\varphi u^*_i)y^b\overline\Delta_bu_i^*+\overline\nabla(\varphi u^*_i)y^b\overline\nabla\overline\Delta_b u_i^*\,.\label{bdc}
\end{align}
Integrating the left hand side, using the divergence theorem and recalling that $\partial_y\varphi=0$ when $y=0$, we obtain
\begin{equation*}
\int_{\RR^{4}_+}\overline{\rm div}\left(\overline\nabla(\varphi u_i^*)y^b\overline\Delta_b u_i^*\right) = - \int_{\RR^3} \partial_y (\varphi u_i^*)y^b \overline{\Delta}_b u_i^* = - \int_{\RR^3} \partial_y\varphi u_i^* y^b \overline{\Delta}_b u_i^*
- \int_{\RR^3} \varphi \partial_y u_i^* y^b \overline{\Delta}_b u_i^*
=0\,.\end{equation*}
 We integrate by parts the right hand side of \eqref{e:fs_energy}, by means of the divergence theorem
\begin{align}
\int_{\RR^3}\varphi u \cdot (-\Delta)^\alpha u \,dx & \stackrel{\eqref{eqn:frac-lap-est}}{=}c_{\alpha}\lim_{y\to 0}\int_{\RR^3}\varphi u_i^*(\cdot,0)y^b\partial_y\overline\Delta_b u_i^*(\cdot, 0)\,dx
=- c_{\alpha}\!\int_{\RR^4_+}\overline{\rm div}\left(\varphi u_i^*y^b\overline\nabla\overline\Delta_b u_i^*\right)\nonumber\\
& =-c_{\alpha}\! \int_{\RR^4_+}\varphi u_i^*y^b\overline\Delta_b^2 u_i^*-c_{\alpha}\!\int_{\RR^4_+} y^b \overline\nabla(\varphi u_i^*) \cdot \overline\nabla\overline\Delta_bu_i^*
 \stackrel{\eqref{biharm}\&\eqref{bdc}}{=}\! c_{\alpha}\!\int_{\RR^4_+}y^b\overline\Delta_b u_i^*\overline\Delta_b(u_i^*\varphi)\label{ll}
.
\end{align}
Observe that the commutator gives, for every $i=1,2,3$,
\begin{align*}
[\varphi,\overline\Delta_b]u_i^* :=\overline\Delta_b(\varphi u_i^* )-\varphi\overline\Delta_b u_i^*& = 2\overline\nabla\varphi\overline\nabla u_i^* +\overline\Delta_b\varphi u_i^*\,.
%& =\varphi\overline\Delta u^* +2\overline\nabla\varphi\overline\nabla u^* +\overline\Delta\varphi u^* +\frac by\varphi\partial_yu^* +\frac by u^* \partial_y\varphi - \varphi\overline\Delta u^* -\varphi\frac by\partial_y u^*\,.
\end{align*}
Replacing the last line in \eqref{ll} with the commutator $[\varphi,\overline\Delta_b]$, we obtain
\begin{equation}\label{e:energia_liscia}
\begin{split}
\int_{\RR^3}\varphi u \cdot (-\Delta)^\alpha u \,dx
&= \int_{\RR^4_+}y^b |\overline\Delta_b u^*|^2 \varphi + \int_{\RR^4_+}[\varphi, \overline{\Delta}_b ]u_i^*y^b\overline\Delta u_i^* 
\\
&= c_{\alpha}\int_{\RR^4_+}\varphi y^b |\overline\Delta_b u^*|^2+ c_{\alpha}\int_{\RR^4_+}2y^b\overline\nabla\varphi\overline\nabla u_i^* \overline\Delta_b u_i^* + y^b u_i^* \overline\Delta_b\varphi\overline\Delta_b u_i^*\,.
\end{split}
\end{equation}

\subsection{Existence of suitable weak solutions}
In order to prove Theorem \ref{t:suitable} we proceed as in Section \ref{ss:Leray_ex}. Upon multiplying the mollified hyperdissipative Navier--Stokes equations, we derive an identity analogous to \eqref{e:energia_liscia} for the pair $(u_\varepsilon, p_\varepsilon)$. In particular, having fixed a test function $\varphi$, we have 
	\begin{align}
& {\int_{\RR^3} \varphi(\cdot,0,t) |u_\varepsilon (\cdot,t)|^2 \, dx}+c_{\alpha}{\int_0^t \int_{\RR^4_+} y^b|\overline \Delta_b u^*_\varepsilon |^2 \varphi \, dx \, dy \, ds}\nonumber\\
	=\; &\;
	\underbrace{\int_0^t \int_{\RR^3} \Big[ |u_\varepsilon|^2 \partial_t \varphi|_{y=0} + (| u_\varepsilon |^2 u_\varepsilon \ast \phi_\varepsilon+2p_\varepsilon u_\varepsilon) \cdot \nabla \varphi|_{y=0} \Big] %\, dx \, ds
	}_{=: C(\varepsilon)}%\nonumber\\
\;-c_{\alpha}\underbrace{\int_0^t\int_{\RR^4_+}y^b\left(2\overline\nabla\varphi\overline\nabla u_\varepsilon^* \overline\Delta_b u_\varepsilon^*+u_\varepsilon^* \overline\Delta_b\varphi\overline\Delta_b u_\varepsilon^* \right)%\, dx \, dy \, ds
}_{=: D (\varepsilon)} \,.\label{eqn:suit-weak-tested_eps}
	\end{align}
Consider the subsequence $(u_{\varepsilon_k}, p_{\varepsilon_k})$ of the previous section, converging to the Leray--Hopf weak solution $(u,p)$.
By the strong convergence proved in the previous section, for a.e. $t\in(0,\infty)$ we conclude
\begin{align*}
& \lim_{k\to\infty} \int_{\RR^3} \varphi(\cdot,0,t) |u_{\varepsilon_k} (\cdot,t)|^2 \, dx = \int_{\RR^3} \varphi(\cdot,0,t) |u(\cdot,t)|^2 \, dx\\
&\lim_{k\to\infty} C (\varepsilon_k) =\int_0^t \int_{\RR^3} \Big[ |u|^2 \partial_t \varphi(\cdot,0,\cdot) + (| u|^2 u +2p u) \cdot \nabla \varphi(\cdot,0,\cdot) \Big] \, dx \, ds\,.
\end{align*}
Next observe that, by Theorem \ref{thm:yang} and the energy estimate of the previous section we know that
\[
\int_0^t \int_{\RR^4_+} y^b|\overline \Delta_b u^*_\varepsilon |^2 \, dx \, dy \, ds
\]
is uniformly bounded. Using the Poisson-type formula of Proposition \ref{prop:poisson_k} we see that $u^*_{\varepsilon_k}\to u^*$ strongly in $L^2 (\mathbb R^4_+ \times [0,t],y^b)$. Moreover, $\overline{\Delta}_b u^*_{\varepsilon_k}$ converges weakly in $L^2(\RR^4_+\times[0,t],y^b)$ to $\overline{\Delta}_b u^*$. Therefore, since $\varphi\geq 0$, by lower semicontinuity we have that
\[
\liminf_{k\to\infty} \int_0^t \int_{\RR^4_+} y^b|\overline \Delta_b u^*_{\varepsilon_k} |^2 \varphi \, dx \, dy \, ds \geq \int_0^t \int_{\RR^4_+} y^b|\overline \Delta_b u^*|^2 \varphi \, dx \, dy \, ds\, .
\]
Using the inequality \eqref{e:interp_2} for $u^*_{\varepsilon_k} - u^*= (u_{\varepsilon_k} - u)^*$ we infer
\[
\lim_{k\to \infty} \int_0^t \int_0^R \int_{B_R} y^b |\overline{\nabla} u^*_{\varepsilon_k} - \overline{\nabla} u^*|^2\, dx\, dy\, ds = 0\, .
\]
With these estimates we conclude
\[
\lim_{k\to\infty} D (\varepsilon_k) = \int_0^t\int_{\RR^4_+}y^b\left(2\overline\nabla\varphi\overline\nabla u^* \overline\Delta_b u^*+u^* \overline\Delta_b\varphi\overline\Delta_b u^* \right)\, dx \, dy \, ds.
\]

\subsection{Weak-strong uniqueness} In this section we complete the proof of Theorem \ref{t:Leray}. The proof is very similar to the same statement for the classical Navier--Stokes equations and we give it for the reader's convenience. Fix $u$ and $v$ Leray--Hopf weak solutions as in the statement. Observe that, by simple interpolation, $v\in L^4 (\mathbb R^3\times [0,T])$. Hence, multiplying the equation by $v$ and integrating in space (note that $(v\cdot \nabla) v\in L^{\sfrac{4}{3}}$ and thus $\nabla p \in L^{\sfrac{4}{3}}$ from Calder\'on-Zygmund estimates) the global energy inequality is indeed an identity. Simple computations show then that (in the distributional sense) for a.e. $t\in (0,\infty)$
\begin{align*}
\frac{d}{dt}\int_{\RR^3} \frac{|u-v|^2}{2}\, dx + \int_{\RR^3} |(-\Delta)^{\sfrac{\alpha}{2}} (u-v)|^2\, dx & \leq
\int_{\RR^3} |v| |u-v| |\nabla (u-v)|\, dx\\
& \le\frac{1}{2\varepsilon} \int_{\RR^3} |v|^2 |u-v|^2\, dx + \frac{\varepsilon}{2} \int_{\RR^3} |\nabla (u-v)|^2\, dx\\
& \le\frac{1}{2\varepsilon} \int_{\RR^3} |v|^2 |u-v|^2\, dx + C \varepsilon  \int_{\RR^3} |(-\Delta)^{\sfrac{\alpha}{2}} (u-v)|^2\, dx\,,
\end{align*}
where the last inequalities follow from Young inequality and Sobolev embeddings, respectively.
Fixing $\varepsilon$ small enough to reabsorb the second summand on the left hand side we conclude
\[
\frac{d}{dt} \int_{\RR^3} \frac{|u-v|^2}{2}\, dx \leq C \int_{\RR^3} |v|^2 |u-v|^2\, dx \leq \|v(\cdot, t)\|_{L^\infty} \int_{\RR^3} |u-v|^2\, dx\, .
\]
By Gronwall inequality, since $u$ and $v$ agree at the initial time, they are identically equal in $[0,T]$.

\section{The extension problem and a Poisson formula}\label{a:ext}

\begin{propos}\label{prop:poisson_k}
Let $\overline\Delta_b$ be the differential operator in \eqref{calc:lapl_b} with $\alpha\in (1,2)$ and $b=3-2\alpha$. The Poisson-type kernel
\begin{align}
P(x,y):=\frac{y^{2\alpha}}{\left(|x|^2+y^2\right)^{\frac{n+2\alpha}{2}}}\,,\label{e:Poisson}
\end{align}
satisfies then the following properties:
\begin{itemize}
\item $\overline{\Delta}_b \overline{\Delta}_b P =0$;
\item If $C_{n,\alpha}:=\|P(\cdot,1)\|_{L^1(\RR^n)}$, then
\begin{align}
\lim_{y\to 0} P(x,y) &=C_{n,\alpha}\delta_0\quad\qquad\text{weakly$^*$ as measures}\label{e:Dirichlet_P}\\
\lim_{y\to 0} y^{1-\alpha}\partial_y P(\cdot,y) &=0\quad\qquad\qquad\;\text{as distributions}\, .\label{e:Neumann_P}
\end{align} 
\end{itemize}
\end{propos}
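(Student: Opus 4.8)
The plan is to verify the three bulleted claims in turn; the substance is a single algebraic differentiation identity for the biharmonicity, a classical approximate‑identity argument for the Dirichlet condition, and a moment‑cancellation argument for the Neumann‑type condition. Throughout I would write $\rho:=(|x|^2+y^2)^{1/2}$. For the biharmonicity I would first record that, for all real $m,\lambda$,
\[
\overline\Delta_b\big(y^m\rho^{-\lambda}\big)=m(m-1+b)\,y^{m-2}\rho^{-\lambda}+\lambda\big(\lambda+1-n-2m-b\big)\,y^m\rho^{-\lambda-2}\,,
\]
obtained by direct computation: the $x$-Laplacian of a radial-in-$x$ expression $y^m\rho^{-\lambda}$ acts as $\partial_\rho^2+\tfrac{n-1}{\rho}\partial_\rho$ on its $\rho$-dependence, one adds $\partial_y^2(y^m\rho^{-\lambda})$ and $\tfrac{b}{y}\partial_y(y^m\rho^{-\lambda})$, and after substituting $|x|^2=\rho^2-y^2$ the terms of type $y^{m+2}\rho^{-\lambda-4}$ cancel, leaving the displayed formula. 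Specializing to $b=3-2\alpha$: with $(m,\lambda)=(2\alpha,\,n+2\alpha)$ the identity gives $\overline\Delta_bP=4\alpha\,y^{2\alpha-2}\rho^{-(n+2\alpha)}-2(n+2\alpha)\,y^{2\alpha}\rho^{-(n+2\alpha+2)}$; a second application to each summand — for which, precisely because $b=3-2\alpha$, either the factor $m(m-1+b)$ or the factor $\lambda(\lambda+1-n-2m-b)$ vanishes identically — produces two equal and opposite terms, whence $\overline\Delta_b^2P=0$.

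For the Dirichlet condition \eqref{e:Dirichlet_P} I would simply note that $P(x,y)=y^{-n}P(x/y,1)$ with $P(\cdot,1)=(1+|\cdot|^2)^{-(n+2\alpha)/2}\in L^1(\RR^n)$ (since $n+2\alpha>n$), and that the substitution $x=yz$ gives $\int_{\RR^n}P(x,y)\,dx=C_{n,\alpha}$ for every $y>0$; thus $\{P(\cdot,y)\}_{y>0}$ is a standard approximate identity of total mass $C_{n,\alpha}$, which is exactly \eqref{e:Dirichlet_P}.

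For the Neumann‑type condition \eqref{e:Neumann_P} I would start from the explicit expression $y^{1-\alpha}\partial_yP=y^{\alpha}\rho^{-(n+2\alpha+2)}\big(2\alpha|x|^2-ny^2\big)$ and exploit two moment identities: differentiating the constant $\int_{\RR^n}P(x,y)\,dx=C_{n,\alpha}$ in $y$ yields $\int_{\RR^n}y^{1-\alpha}\partial_yP(x,y)\,dx=0$, and evenness of $P(\cdot,y)$ in each variable $x_j$ yields $\int_{\RR^n}y^{1-\alpha}\partial_yP(x,y)\,x_j\,dx=0$. Hence, testing against $\psi\in C^\infty_c(\RR^n)$, one may replace $\psi$ by $\psi-\psi(0)-x\cdot\nabla\psi(0)=O(|x|^2)$, and the substitution $x=yz$ bounds the pairing by $\tfrac12\|D^2\psi\|_\infty\,y^{2-\alpha}\int_{\RR^n}(1+|z|^2)^{-(n+2\alpha+2)/2}\big|2\alpha|z|^2-n\big|\,|z|^2\,dz$. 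I expect this to be the one genuinely delicate point: handling the two summands of $\partial_yP$ separately (e.g.\ by the triangle inequality) only gives a bound of order $y^{1-\alpha}$, which blows up since $\alpha>1$, so the vanishing of the zeroth and first moments is essential in order to recover a positive power of $y$; moreover the constraint $1<\alpha<2$ is used at both ends — $\alpha>1$ is exactly what makes the above $z$-integral converge (the integrand decays like $|z|^{-(n+2\alpha-2)}$), and $\alpha<2$ is what makes $y^{2-\alpha}\to0$ as $y\downarrow0$.
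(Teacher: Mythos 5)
Your proposal is correct and is essentially the paper's own proof: the same two-parameter identity for $\overline\Delta_b\bigl(y^m(|x|^2+y^2)^{-\lambda/2}\bigr)$ (the paper's \eqref{e:db_gen}) applied twice for the biharmonicity, the same approximate-identity/constant-mass argument for \eqref{e:Dirichlet_P}, and the same cancellation of the zeroth and first moments combined with a second-order Taylor expansion and the scaling $x=yz$, yielding a bound of order $y^{2-\alpha}\|\psi\|_{C^2}$ for \eqref{e:Neumann_P}. The only slip is cosmetic: the $x$-Laplacian of a function of $\rho=\sqrt{|x|^2+y^2}$ is not literally $\partial_\rho^2+\tfrac{n-1}{\rho}\partial_\rho$, but the discrepancy consists precisely of the $y^{m+2}\rho^{-\lambda-4}$ terms you yourself note as cancelling against those from $\partial_y^2$, and your displayed identity and all subsequent coefficients agree exactly with the paper's.
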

\begin{remark}\label{rmk:propP} Let us notice the following properties of $P(x,y)$:
\begin{itemize}
\item[(i)] $\int_{\RR^n}P(x,y)\,dx$ is constant and therefore $\int_{\RR^n}\partial_y P(x,y)\,dx=0$;
\item[(ii)] $P(x,y)$ is rotation invariant and therefore $\int_{\RR^n}P(x,y) x\,dx=0$;
\item[(iii)] we have $\int_{\RR^n} P (x,y) x_i x_j \, dx =
y^2 \int_{\RR^n} \frac{z_i z_j}{(|z|^2 +1)^{\frac{n+2\alpha}{2}}}\, dz = \tilde c_{n,\alpha} y^2 \delta_{ij}$;
\item[(iv)] by the radial symmetry of $x\mapsto P (x,y)$, we have that $\int_{B_1} P (x,y) x_i x_j x_\ell \, dx = 0$.
\end{itemize}
\end{remark}
\begin{proof}
First of all, we use the notation $X=(x,y)\in\RR^{n+1}$ and observe that
\begin{equation*}
\overline\Delta |X|^{-\gamma} =(\gamma^2-(n-1)\gamma)|X|^{-(\gamma+2)},\qquad \partial_y|X|^{-\gamma} =-\gamma y|X|^{-(\gamma+2)}\,.
\end{equation*}
We can compute
\begin{align}
\overline\Delta_b\left(\frac{y^\beta}{|X|^\gamma}\right) = & \overline\Delta\left(\frac{y^\beta}{|X|^\gamma}\right)+\frac by\partial_y \left(\frac{y^\beta}{|X|^\gamma}\right)\nonumber\\
= & \beta(\beta-1)\frac{y^{\beta-2}}{|X|^{\gamma}}-2\gamma\beta\frac{y^{\beta}}{|X|^{\gamma+2}}+(\gamma^2-(n-1)\gamma)\frac{y^{\beta}}{|X|^{\gamma+2}}+b\beta\frac{y^{\beta-2}}{|X|^{\gamma}} -b\gamma\frac{y^{\beta}}{|X|^{\gamma+2}}\nonumber\\
= & \beta(\beta-1+b)\frac{y^{\beta-2}}{|X|^{\gamma}} +\gamma(\gamma-2\beta-(n-1)-b)\frac{y^{\beta}}{|X|^{\gamma+2}}\,.\label{e:db_gen}
\end{align}
When $\gamma=n+2\alpha$ and $\beta=2\alpha$ we see that
\begin{equation*}%\label{e:first_op}
\overline\Delta_b\left(\frac{y^{2\alpha}}{|X|^{n+2\alpha}}\right)=4\alpha\frac{y^{2\alpha-2}}{|X|^{n+2\alpha}} -2(n+2\alpha)\frac{y^{2\alpha}}{|X|^{n+2\alpha+2}}\,.
\end{equation*}
Taking $\overline\Delta_b$ and applying again \eqref{e:db_gen} to each summand of the right hand side, we obtain
%Moreover, applying \eqref{e:db_gen} to each addendum of \eqref{e:first_op} we obtain
%\begin{align*}
%\Delta_b\left(\frac{y^{2s-2}}{|X|^{n+2s}}\right) & =2(n+2s)\frac{y^{2s-2}}{|X|^{n+2s+2}}\\
%\Delta_b\left(\frac{y^{2s}}{|X|^{n+2s+2}}\right) & =4s\frac{y^{2s-2}}{|X|^{n+2s+2}}\,.
%\end{align*}
%Hence we can conclude that
\[
\overline\Delta_b\overline\Delta_b\left(\frac{y^{2\alpha}}{|X|^{n+2\alpha}}\right)=8\alpha(n+2\alpha)\frac{y^{2\alpha-2}}{|X|^{n+2\alpha+2}}-2(n+2\alpha)4\alpha\frac{y^{2\alpha-2}}{|X|^{n+2\alpha+2}}=0\,.
\]
Let us notice that, by a change of variable, $\|P(\cdot,y)\|_{L^1(\RR^n)}$ is independent from $y$. Moreover, for every $\varepsilon>0$, $\|P(\cdot,y)\|_{L^1(\RR^n\setminus B_\varepsilon)}\to 0$ as $y\to 0$, which therefore shows \eqref{e:Dirichlet_P}. 
{We next claim that, for any test function $\phi\in C^2 (\RR^n)$, 
\begin{equation}\label{e:stima_C-2}
\left|\int_{\RR^n} \partial_y P (x,y)\,\phi (x)\, dx\right| \leq C y \|\phi\|_{C^2(\RR^n)}\qquad\forall\, y>0\,.
\end{equation}
Multiplying by $y^{1-\alpha}$ the latter inequality and using $2-\alpha >0$, we easily that
\[
\lim_{y\downarrow 0} y^{1-\alpha} \int_{\RR^n} \partial_y P (x,y)\,\phi (x)\, dx = 0\, ,
\]
which, by the arbitrariness of the test function, implies  \eqref{e:Neumann_P}.

%Next, notice the following properties of $\partial_y P(x,y)$:
% $\int_{\RR^n}\partial_y P(x,y)\,dx=0$ because $\int_{\RR^n}P(x,y)\,dx$ is constant, 
%and
%$\int_{\RR^n}{\partial_y} P(x,y) x\,dx=0\,
%$
%because $\partial_y P(x,y)$ is rotation invariant.
In order to show \eqref{e:stima_C-2}, we use the properties (i) and (ii) of Remark \ref{rmk:propP} to compute
\begin{align*}
J(y)&:=\int_{\RR^n}\partial_yP(x,y)\phi(x)\,dx
=\int_{\RR^n}\partial_y P(x,y)(\phi(x)-\phi(0)-\nabla\phi(0)\cdot x)\,dx\,,
\end{align*}
By means of a Taylor expansion, we obtain
\begin{align}
|J(y)| & \le C\|\phi\|_{C^2 (\RR^n)} \int_{\RR^n} \left|{\partial_y} P (x,y)\right||x|^2\,dx\label{e:prima_quadratica}
\end{align}
In order to bound the latter integral, but also for later purposes, we remark the following simple bounds:
\begin{align}
|\partial_y^k P (x,y)| &\leq C (k) \sum_{i=0}^k \frac{y^{2\alpha - k+2i}}{(|x|^2 +y^2)^{\frac{n+2\alpha}{2}+i}}
\leq C(k) \frac{y^{2\alpha - k}}{(|x|^2+y^2)^{\frac{n+2\alpha}{2}}}
\qquad \mbox{for $k\in \mathbb N$.}\label{e:puntuale}
\end{align}
Combining the latter estimate with \eqref{e:prima_quadratica} we obtain
\begin{align*}
|J (y)| & \le C\|\phi\|_{C^2 (\RR^n)} \int_{\RR^n} \frac{|x|^2y^{2\alpha -1}}{(|x|^2+y^2)^{\frac{n+2\alpha}{2}}}\,dx\ =
C\|\phi\|_{C^2 (\RR^n)} y \int_{\RR^n} \frac{|z|^2}{(1+|z|^2)^{\frac{n+2\alpha}{2}}}\, dz\, ,
\end{align*}
which implies \eqref{e:stima_C-2}.}
\end{proof}

For the sake of completeness, we add here a proof of Yang's theorem \ref{thm:yang} by means of the Poisson kernel computed above.
\begin{proof}[Proof of Theorem \ref{thm:yang}]
We define the extension $u^*\in L^2_{\rm loc}(\RR^{n+1}_+,y^b)$ as
\begin{equation}\label{e:rappresentazione}
u^*(x,y):= P(\cdot,y)\ast u (x)=\int_{\RR^n} \frac{y^{2\alpha}u(\xi)}{\left(|x-\xi|^2+y^2\right)^\frac{n+2\alpha}{2}}\,d\xi\,.
\end{equation}
Observe that $\overline\Delta_b u^*\in L^2(\RR^{n+1}_+,y^b)$. \eqref{biharm} is an obvious consequence of $\overline\Delta_b^2 P =0$. The boundary conditions \eqref{e:Dirichlet} and \eqref{e:Neumann} follow instead from \eqref{e:Dirichlet_P} and \eqref{e:Neumann_P}, respectively. Since the proof is entirely analogous, we just show the one for \eqref{e:Neumann}. {Fix a smooth test function
 $\varphi \in C^\infty_c(\RR^n)$ and observe that
\[
\int_{\RR^n\times\{y\}}y^{1-\alpha} \partial_y u^* f = \int_{\RR^n \times \{y\}} \left(y^{1-\alpha} \partial_y P (\cdot, y)\right)\ast u f 
= \int_{\RR^n\times\{y\}} u  \left(y^{1-\alpha} \partial_y P (\cdot, y)\right)\ast f\,.
\]
In particular, from \eqref{e:Neumann_P} and the smoothness of $f$, it follows that, as $y\downarrow 0$, $\left(y^{1-\alpha} \partial_y P (\cdot, y)\right)\ast f$ converges to $0$ in the Schwartz space $\mathscr{S}$. Since $u\in L^2$, we easily conclude that
\[
\lim_{y\downarrow 0} \int_{\RR^n\times\{y\}}y^{1-\alpha}\partial_y u^* f  = 0\, .
\]}

Since the functional $v\mapsto\int_{\RR^{n+1}_+}y^b|\overline\Delta_b u^*|^2$ is strictly convex, from the fulfillment of the {Euler-Lagrange conditions \eqref{biharm} and \eqref{e:Neumann}}, we obtain that $u^*$ is the unique minimizer for the functional, hence \eqref{e:minimum_yang_ext}. 

We next come to \eqref{e:en_of_ext}. We first observe that $\widehat{u^*}(\xi,y)=\hat{P}(\xi,y)\hat u(\xi)$. Since $P$ (hence $\hat P$) is radially symmetric, with a change of variables we have that $\hat P(\xi,y)=\widehat{P(\cdot,1)}(\xi y)=\widehat{P(\cdot,1)}(|\xi|y)$. For the sake of brevity, we call $\phi(z):=\widehat{P(\cdot,1)}(z)$. By Parseval's identity we can compute
\begin{align}
\int_{\RR^{n+1}_+}y^b|\overline\Delta_b u^*|^2 & =\int_{\RR^{n+1}_+} y^b\left||\xi|^2\widehat{u^*}+\partial_{yy}\widehat{u^*}+\frac by\partial_y\widehat{u^*}\right|^2\,d\xi\,dy\nonumber\\
 & = \int_{\RR^{n+1}_+} y^b|\xi|^4\left|\phi(|\xi|y)\hat u(\xi)+\frac {b}{|\xi|y}\phi'(|\xi|y)\hat u(\xi)+\phi''(|\xi|y)\hat u(\xi)\right|^2\,d\xi\,dy\nonumber\\
 & = \int_{\RR^{n+1}_+} z^b|\xi|^{2\alpha}\left|\phi(z)\hat u(\xi)+\frac {b}{z}\phi'(z)\hat u(\xi)+\phi''(z)\hat u(\xi)\right|^2\,d\xi\,dz\nonumber\\
 & = \int_{\RR^n}|\xi|^{2\alpha}|\hat u(\xi)|^2\, d\xi \int_{\RR_+} z^b\left|\phi(z)+\frac {b}{z}\phi'(z)+\phi''(z)\right|^2\,dz= c_{n,\alpha}^{-1} \int_{\RR^n}|\xi|^{2\alpha}|\hat u(\xi)|^2\, d\xi \,.\label{id:functionals}
\end{align}
This easily shows \eqref{e:en_of_ext}, but observe indeed that, by the very same argument, we can conclude
\begin{equation}\label{e:polarizzata}
\int_{\RR^{n+1}_+} y^b \overline\Delta_b v^* \overline\Delta_b u^* = c_{n,\alpha}^{-1} \int_{\RR^n} v (-\Delta)^\alpha u
\end{equation}
for every pair $u, v \in H^\alpha$.

\medskip

In order to complete the proof of the theorem, we wish to show \eqref{eqn:frac-lap-est}. First of all we claim that the family  of distributions $\{y^b \partial_y \overline{\Delta}_b P (\cdot, y)\}_{y\in (0,1)}$ is equibounded in the space $\mathscr{S}'$. In particular we will show the existence of a constant $C$ such that
\begin{equation}\label{e:bound_C-4}
\left| \int_{\RR^n} y^b \phi (x) \partial_y \overline{\Delta}_b P (x,y)\, dx \right| \leq C \|\phi\|_{C^4(\RR^n)}
\qquad \forall y\in (0,1) \;\mbox{and}\; \forall \phi \in \mathscr{S}\, .
\end{equation}
To prove the latter bound we first compute
\[
y^b \partial_y \overline{\Delta}_b P = y^b \Delta \partial_y P + y^b \partial_y^3 P + y^b \partial_y \left(\frac{b}{y} \partial_y P\right) \, .
\]
We thus need to bound the three distributions $y^b \Delta \partial_y P (\cdot, y), y^b \partial^3_y P (\cdot, y)$ and $y^b \partial_y (y^{-1} \partial_y P)$. Recalling the estimate \eqref{e:stima_C-2} of the previous proposition, we easily see that 
\[
\left| \int_{\RR^n} y^b \partial_y \Delta P (x, y) \phi (x)\, dx\right|
= \left| \int_{\RR^n} y^b \partial_y P (x, y) \Delta \phi (x)\, dx\right| \leq C y^{4-2\alpha} \|\Delta \phi\|_{C^2(\RR^n)}\, .
\]
{Thus, we just need to bound the remaining two terms $y^b \partial^3_y P (\cdot, y)$ and $y^b \partial_y (y^{-1} \partial_y P)$.}
%Next, recalling the computations of the previous proposition, we get
%\begin{align}
%\int P (x,y)\, dx &= {\rm const}\label{e:momento_0}\\
%\int P (x,y) x\, dx &= 0\, .\label{e:momento_1}
%\end{align}
%We observe in addition that
%\begin{align*}
%\int P (x,y) x_i x_j \, dx &= \int \frac{y^{2\alpha} x_i x_j}{(|x|^2+y^2)^{\frac{n+2\alpha}{2}}}\, dx =
%y^2 \int \frac{z_i z_j}{(|z|^2 +1)^{\frac{n+2\alpha}{2}}}\, dz = y^2 \delta_{ij}\, 
%\end{align*}
%and that, by the radial symmetry of $x\mapsto P (x,y)$, 
%\begin{align*}
%\int_{B_1} P (x,y) x_i x_j x_\ell \, dx &= 0\,  
%\end{align*}

By Remark \ref{rmk:propP} we achieve that, for every $y>0$,
\[
\int_{\RR^n} \partial_y^3 P\, dx = \int_{\RR^n} \partial_y^3 P x_i\, dx = \int_{\RR^n} \partial_y^3 P x_i x_j\, dx 
= \int_{B_1} \partial^3_y P x_i x_j x_\ell\, dx = 0
\]
and
\begin{multline*}
\int_{\RR^n} \partial_y (y^{-1} P)\, dx = \int_{\RR^n} \partial_y (y^{-1} \partial_y P) x_i\, dx = \int_{\RR^n} \partial_y (y^{-1} \partial_y P) x_i x_j\, dx= \;\int_{B_1} \partial_y (y^{-1} \partial_y P) x_i x_j x_\ell\, dx = 0\, .
\end{multline*}
Thus we can compute
\begin{multline*}
 \int_{\RR^n} \partial_y^3 P (x,y) \phi (x)\, dx\\
= \int_{\RR^n} \partial_y^3 P (x,y) \left(\phi (x) - \phi (0) - \nabla \phi (0) \cdot x 
- \frac{1}{2} \sum_{i,j} \partial^2_{ij}\phi  (0) x_i x_j - \frac{1}{6} \sum_{i,j,\ell} \partial^3_{i,j,\ell} \phi (0) x_i x_j x_\ell \mathbbm{1}_{B_1} (x)\right)\, dx\, .
\end{multline*}
From the Taylor expansion for $\phi$ we conclude
\[
\left|\phi (x) - \phi (0) - \nabla \phi (0) \cdot x 
- \frac{1}{2} \sum_{i,j} \partial^2_{ij}\phi  (0) x_i x_j - \frac{1}{6} \sum_{i,j,\ell} \partial^3_{i,j,\ell} \phi (0) x_i x_j x_\ell \mathbbm{1}_{B_1} (x)\right| \leq C\|\phi\|_{C^4(\RR^n)} \min \{|x|^4, |x|^2\}\, .
\]
{From \eqref{e:puntuale} we then get 
\begin{align*}
\left| \int_{\RR^n} y^b \partial_y^3 P (x,y) \phi (x)\, dx\right| &\leq C \|\phi\|_{C^4(\RR^n)} \int_{B_1} \frac{|x|^4}{(|x|^2 +y^2)^{\frac{n+2\alpha}{2}}}\,dx + C\|\phi\|_{C^4(\RR^n)} \int_{\{|x|>1\}} \frac{|x|^2}{(|x|^2+y^2)^{\frac{n+2\alpha}{2}}}\\
&\leq C\|\phi\|_{C^4 (\RR^n)} \int_{B_1} |x|^{-n+ (4-2\alpha)}\, dx + C \|\phi\|_{C^4 (\RR^n)} \int_{\{|x|>1\}} |x|^{-n-(2-2\alpha)}\, dx\\
&\leq C \|\phi\|_{C^4 (\RR^n)}\, .
\end{align*}}
\noindent The same bound with $y^b \partial_y (y^{-1} \partial_y P (\cdot, y))$ replacing $y^b \partial_y^3 P (\cdot, y)$ is entirely analogous and we have thus shown \eqref{e:bound_C-4}. 

\medskip

Fix now $\varphi$ and $u$ both in $\mathscr{S}$. We will show below that
\begin{equation}\label{e:convergenza}
\lim_{y\to 0} \int_{\RR^n\times\{y\}} \varphi^* (x, y) y^b \partial_y \overline{\Delta}_b  u^* (x,y)\, dx =
c \int_{\RR^n} \varphi (x) (-\Delta)^\alpha u (x)\, dx \, ,
\end{equation}
for some geometric constant $c= c (n, \alpha)$.

Since $\varphi$ is smooth, $\varphi^* (\cdot, y) \to \varphi$ in $\mathscr{S}$ and, in particular, thanks to the bound 
\eqref{e:bound_C-4}, \eqref{e:convergenza} implies
\[
\lim_{y\to 0} \int_{\RR^n\times\{y\}} \varphi (x) y^b \partial_y \overline{\Delta}_b u^* (x,y)\, dx =
c \int_{\RR^n} \varphi (x) (-\Delta)^\alpha u (x)\, dx\, .
\]
Since $\varphi\in \mathscr{S}$ is arbitrary, the latter implies that $y^b \partial_y \overline{\Delta}_b u^* (\cdot,y)
\to c (-\Delta)^\alpha u$ in the sense of distributions. In turn, since $u\in \mathscr{S}$ is also arbitrary and 
$y^b \partial_y \overline{\Delta}_b u^* (\cdot,y) = (y^b \partial_y \overline{\Delta}_b P (\cdot,y)) \ast u$,
we conclude that
\[
\lim_{y\downarrow 0} y^b \partial_y \overline{\Delta}_b P (\cdot,y) = c (-\Delta)^\alpha \delta_0
\]
in the sense of distributions. Finally, the latter identity implies \eqref{eqn:frac-lap-est} for a general tempered distribution $u$
(and thus also for $u\in H^\alpha$).

We are thus left to show \eqref{e:convergenza}. Observe first that
\begin{align}
\int_{\RR^n\times\{y=h\}} y^b \varphi^*\partial_y\overline\Delta_bu^* & =\int_{\RR^{n+1}\cap\{y> h\}}\overline{\rm div}\left(y^b \varphi^*_i\overline \nabla \overline\Delta_bu^*_i\right) \stackrel{\eqref{biharm}}{=}\int_{\RR^{n+1}\cap\{y> h\}} y^b\overline\nabla \varphi^*_i \overline\nabla\overline\Delta_b u^*_i\nonumber\\
& =\underbrace{\int_{\RR^n\times\{y=h\}}y^b\partial_y \varphi^*\cdot \overline\Delta_b u^*}_{=: I(h)}+\int_{\RR^{n+1}\cap\{y> h\}} \overline{\rm div}(y^b\overline\nabla \varphi^*)\overline\Delta_b u^*\,.\label{tough_after}
\end{align}
The key point is that $I(h)\to 0$ as $h\downarrow 0$. The latter claim implies then
\[
\lim_{y\to 0} \int_{\RR^n} \varphi^* (x, y) y^b \partial_y \overline{\Delta}_b \overline u^* (x,y)\, dx
= \int_{\RR^{n+1}_+} y^b \overline{\Delta}_b \varphi^* \overline{\Delta}_b u^*\, ,
\]
which by \eqref{e:polarizzata} gives \eqref{e:convergenza}. 

In order to show that $I(h)\to 0$, observe that we have already argued that $y^{1-\alpha}\partial_y \varphi^*$ converges to zero in the Schwartz space $\mathscr{S}$ (because $\varphi\in \mathscr{S}$). We thus need to show that $\{y^{2-\alpha}  \overline{\Delta}_b u^* (\cdot, y)\}_{y\in (0,1)}$ is bounded in the space $\mathscr{S}'$. Using again the representation through the Poisson's kernel, it suffices to show the boundedness of $\{y^{2-\alpha} \overline{\Delta}_b P (\cdot, y)\}_{y\in (0,1)}$. We compute
\[
y^{2-\alpha} \overline{\Delta}_b P (\cdot, y) = y^{2-\alpha} \Delta P (\cdot, y) + y^{2-\alpha} \partial_y^2 P (\cdot, y) + b y^{1-\alpha} \partial_y P (\cdot, y)\, .
\]
Since $2-\alpha>0$ and $P (\cdot, y) \to c \delta_0$, the boundedness of the first summand is obvious. In the previous proposition we have already shown that $y^{1-\alpha} \partial_y P (\cdot, y) \to 0$. We thus need to handle the second summand. We proceed as in the above arguments and use the momentum conditions (see Remark \ref{rmk:propP})
\[
\int_{\RR^n} \partial_y^2 P (x,y)\, dx = \int_{\RR^n} \partial_y^2 P (x,y) x_i\, dx = 0\, 
\]
to write
\[
\int_{\RR^n}\partial_y^2 P (x,y) \phi (x)\, dx = \int_{\RR^n} \partial_y^2 P (x,y) (\phi (x) - \phi (0) - \nabla \phi (0)\cdot x)\, dx\, .
\]
{In particular, by the Taylor expansion and the estimates \eqref{e:puntuale}, 
\begin{align}
\left|\int_{\RR^n} y^{2-\alpha} \partial_y^2 P (x,y)\, \phi (x)\, dx\right| &\leq C \|\phi\|_{C^2(\RR^n)} y^{2-\alpha} \int_{\RR^n} \left|\partial_y^2 P 
\right||x|^2\, dx \leq C\|\phi\|_{C^2(\RR^n)} y^{\alpha} \int_{\RR^n} \frac{|x|^2}{(|x|^2 + y^2)^{\frac{n+2\alpha}{2}}}\, dx\\
& \leq C\|\phi\|_{C^2(\RR^n)} y^{2-\alpha} \int_{\RR^n} \frac{|z|^2}{(|z|^2 + 1)^{\frac{n+2\alpha}{2}}}\, dz\le C \|\phi\|_{C^2(\RR^n)}\, .\qedhere
\end{align}
}
\end{proof}

\bibliographystyle{plain}
\bibliography{NS-alfa_C}

\begin{thebibliography}{10}

\bibitem{BMR}
D.~Barbato, F.~Morandin, and M.~Romito.
\newblock Global regularity for a slightly supercritical hyperdissipative
  {N}avier-{S}tokes system.
\newblock {\em Anal. PDE}, 7(8):2009--2027, 2014.

\bibitem{CKN}
L.~Caffarelli, R.~Kohn, and L.~Nirenberg.
\newblock Partial regularity of suitable weak solutions of the
  {N}avier-{S}tokes equations.
\newblock {\em Comm. Pure Appl. Math.}, 35(6):771--831, 1982.

\bibitem{CS}
L.~Caffarelli and L.~Silvestre.
\newblock An extension problem related to the fractional {L}aplacian.
\newblock {\em Comm. Partial Differential Equations}, 32(7-9):1245--1260, 2007.

\bibitem{Campanato}
S.~Campanato.
\newblock {\em Sistemi ellittici in forma divergenza. {R}egolarit\`a
  all'interno}.
\newblock Quaderni. [Publications]. Scuola Normale Superiore Pisa, Pisa, 1980.

\bibitem{chang-yang}
Sun-Yung~Alice Chang and Ray~A. Yang.
\newblock On a class of non-local operators in conformal geometry.
\newblock {\em Preprint}.

\bibitem{chen-forth}
Eric Chen.
\newblock Forthcoming paper.

\bibitem{cotivicol}
Michele Coti~Zelati and Vlad Vicol.
\newblock On the global regularity for the supercritical {SQG} equation.
\newblock {\em Indiana Univ. Math. J.}, 65(2):535--552, 2016.

\bibitem{Federer}
Herbert Federer.
\newblock {\em Geometric measure theory}.
\newblock Die Grundlehren der mathematischen Wissenschaften, Band 153.
  Springer-Verlag New York Inc., New York, 1969.

\bibitem{Fefferman}
C.~L. Fefferman.
\newblock Existence and smoothness of the {N}avier-{S}tokes equation.
\newblock In {\em The millennium prize problems}, pages 57--67. Clay Math.
  Inst., Cambridge, MA, 2006.

\bibitem{Giusti}
E.~Giusti.
\newblock {\em Direct methods in the calculus of variations}.
\newblock World Scientific Publishing Co., Inc., River Edge, NJ, 2003.

\bibitem{JiuWang}
Q.~Jiu and Y.~Wang.
\newblock On possible time singular points and eventual regularity of weak
  solutions to the fractional {N}avier-{S}tokes equations.
\newblock {\em Dyn. Partial Differ. Equ.}, 11(4):321--343, 2014.

\bibitem{KP}
N.~H. Katz and N.~Pavlovi\'c.
\newblock A cheap {C}affarelli-{K}ohn-{N}irenberg inequality for the
  {N}avier-{S}tokes equation with hyper-dissipation.
\newblock {\em Geom. Funct. Anal.}, 12(2):355--379, 2002.

\bibitem{Leray}
J.~Leray.
\newblock Sur le mouvement d'un liquide visqueux emplissant l'espace.
\newblock {\em Acta Math.}, 63(1):193--248, 1934.

\bibitem{Lions}
J.-L. Lions.
\newblock {\em Quelques m\'ethodes de r\'esolution des probl\`emes aux limites
  non lin\'eaires}.
\newblock Dunod; Gauthier-Villars, Paris, 1969.

\bibitem{MattinglySinai}
J.~C. Mattingly and Ya.~G. Sinai.
\newblock An elementary proof of the existence and uniqueness theorem for the
  {N}avier-{S}tokes equations.
\newblock {\em Commun. Contemp. Math.}, 1(4):497--516, 1999.

\bibitem{Scheffer1}
V.~Scheffer.
\newblock Partial regularity of solutions to the {N}avier-{S}tokes equations.
\newblock {\em Pacific J. Math.}, 66(2):535--552, 1976.

\bibitem{Scheffer2}
V.~Scheffer.
\newblock Hausdorff measure and the {N}avier-{S}tokes equations.
\newblock {\em Comm. Math. Phys.}, 55(2):97--112, 1977.

\bibitem{TangYu}
L.~Tang and Y.~Yu.
\newblock Partial regularity of suitable weak solutions to the fractional
  {N}avier-{S}tokes equations.
\newblock {\em Comm. Math. Phys.}, 334(3):1455--1482, 2015.

\bibitem{Tao}
T.~Tao.
\newblock Global regularity for a logarithmically supercritical
  hyperdissipative {N}avier-{S}tokes equation.
\newblock {\em Anal. PDE}, 2(3):361--366, 2009.

\bibitem{Yang}
R.~{Yang}.
\newblock {On higher order extensions for the fractional Laplacian}.
\newblock {\em ArXiv e-prints}, February 2013.

\end{thebibliography}
\end{document}